\allowdisplaybreaks \numberwithin{equation}{section}
\theoremstyle{plain}
\newtheorem{theorem}{Theorem}[section]
\newtheorem{proposition}[theorem]{Proposition}
\newtheorem{lemma}[theorem]{Lemma}
\newtheorem{corollary}[theorem]{Corollary}
\theoremstyle{definition}
\newtheorem{definition}[theorem]{Definition}
\newtheorem{remark}[theorem]{Remark}
\def \x {x(\cdot)}
\def \y {y(\cdot)}
\def \e {\epsilon}
\def \D {\mathbb D}
\def \Diag {\mathcal D}
\def \Diagc {\overline{\mathcal D}}
\title[On the diagram A-P-W]{About the Blaschke-Santal\'o diagram of area, perimeter and moment of inertia}
\author[A. Henrot]{Antoine Henrot}
\address[Antoine Henrot]{Universit\'e de Lorraine CNRS, IECL, F-54000 Nancy, France}
\email[Antoine Henrot]{antoine.henrot@univ-lorraine.fr}
\author[R. Gasteldello]{Rapha\"el Gastaldello}
\address[Rapha\"el Gastaldello]{Mines Nancy, Universit\'e de Lorraine CNRS, IECL, F-54000 Nancy, France}
\email[Rapha\"el Gastaldello]{raphael.gastaldello7@etu.univ-lorraine.fr}
\author[I. Lucardesi]{Ilaria Lucardesi}
\address[Ilaria Lucardesi]{Dipartimento di Matematica e Informatica "Ulisse Dini", University of Florence, Viale Morgagni 67/a, I-50134 Firenze, Italy}
\email{ilaria.lucardesi@unifi.it}
\begin{document}

\begin{abstract}
We study the Blaschke-Santal\'o diagram associated to the area, the perimeter, and the moment of inertia. We work in dimension 2, under two assumptions on the shapes: convexity and the presence of two orthogonal axis of symmetry. We discuss topological and geometrical properties of the diagram. As a by-product we address a conjecture by P\'olya, in the simplified setting of double symmetry.
\end{abstract}

\thanks{{\bf Acknowledgments.} This work was supported by the project ANR-18-CE40-0013 SHAPO financed by the French Agence Nationale de la Recherche (ANR)}

\maketitle
Date: {\today}

{\small
	
	\bigskip
	\noindent\keywords{\textbf{Keywords:} shape optimization, area, perimeter, moment of inertia, convex geometry, Blaschke-Santal\'o diagram
	
	\bigskip
	\noindent\subjclass{\textbf{MSC 2010:} 49Q10, 52A40, 28A75
	
	}}
	\bigskip
	\bigskip

\section{Introduction}
Blaschke-Santal\'o diagrams represent a powerful tool in shape optimization, to investigate the relations of triples of shape functionals $F_1,F_2,F_3$, defined in a class of admissible shapes $\mathcal A$. These diagrams consist in studying the range of the vector shape functional $(F_1,F_2):\mathcal A\to \mathbb R^2$, under the constraint $F_3(\Omega)=1$, namely the set
$$
\left\{(x,y) \in \mathbb R^2 \ :\ \exists \Omega \in \mathcal A, \ x=F_1(\Omega)\,,\ y=F_2(\Omega)\,,\ F_3(\Omega)=1\right\}.
$$
When dealing with homogeneous shape functionals, an equivalent diagram is 
$$
\left\{(x,y) \in \mathbb R^2 \ :\ \exists \Omega \in \mathcal A, \ x=F_3^\alpha(\Omega) F_1(\Omega)\,,\ y=F_3^\beta(\Omega)F_2(\Omega)\right\},
$$
where the powers $\alpha$ and $\beta$ are chosen so that $F_3^\alpha F_1$ and $F_3^\beta F_ 2$ are scale invariant.

First used by Santal\'o in \cite{Santa}, this approach has now become a standard tool in shape optimization. We cite, e.g., \cite{AH, VBP, BBF, DHP, FL, HC1, HC2, LZ}, in which shape functionals of spectral and geometric type are studied. As it appears from the literature, the theoretical analysis, even if very fine, is in general not enough for an accurate description of the diagram. Therefore it is often accompanied by numerical simulations. In this respect we cite the recent paper \cite{BBO}, in which the authors give a new and interesting way to generate random shapes whose images distribute uniformly in $\mathcal D$.  

In the present paper, we work in dimension 2 and we consider the following triple of shape functionals: the area $A(\Omega)$, the perimeter $P(\Omega)$, and the moment of inertia $W(\Omega)$ with respect to the center of gravity
$$
W(\Omega):=\int_\Omega [(x-x_G)^2 + (y-y_G)^2]\, \mathrm{d}x\, \mathrm{d}y, 
$$
where $(x_G, y_G)$ are the coordinates of the center of gravity of $\Omega$. 

The three functionals under study are invariant under rigid motion and are positively homogeneous. Therefore, as mentioned above, the Blaschke-Santal\'o diagram associated to $(A,P,W)$ 
$$
\{(x,y)\in \mathbb R^2\ :\ \exists \Omega \in \mathcal A\,,\ A(\Omega)=1\,,\ x=P(\Omega)\,,\ y=W(\Omega)\}
$$
can be deduced from the diagram
$$
\mathcal D:=\left\{ (x,y) \in \mathbb R^2\ :\ \exists \Omega\in \mathcal A\,,\ x= \frac{1}{2 \pi}\frac{A^2(\Omega)}{W(\Omega)}\,,\   y= 4 \pi \frac{A(\Omega)}{P^2(\Omega)}\right\},
$$
in which the area constraint is enclosed into the coordinate shape functionals, which are now scale invariant. The pre-factors $1/2\pi$ and $4 \pi$ are normalization constants and serve the purpose of making the diagram fit the unit square (cf. Proposition \ref{bounds}).

In the present paper we study $\mathcal D$ for the following class of planar shapes:
$$
\mathcal A:=\{\Omega \subset \mathbb R^2\ :\ \Omega\ \hbox{open, convex, with 2 orthogonal axis of symmetry}\}.
$$
The double symmetry assumption allows us to rewrite the moment of inertia as
$$
W(\Omega)=\int_\Omega (x^2 + y^2)\, \mathrm{d}x\, \mathrm{d}y,
$$
since we may assume that the two axis of symmetry coincide with the coordinate axis so that the center of gravity is located at the origin.

We prove that the diagram is simply connected and coincides with the planar region enclosed between two increasing curves which connect the point $(0,0)$ to the point $(1,1)$. The point $(0,0)$ is attained asymptotically by the image of thin shapes, whereas the point $(1,1)$ corresponds (only) to the disks. The behavior of $\mathcal D$ near these two points is investigated using the technique of shape derivatives, either of thin domains or of nearly spherical sets, and it is described by the slopes of the boundary curves in the two extremal points of the diagram.

The two boundary curves are the graphs of
$$
L^\pm(x):=\max/\min\left\{ 4\pi \frac{A(\Omega)}{P^2(\Omega)}\ :\ \Omega \in \mathcal A\,,\ \frac{1}{2\pi}\frac{A^2(\Omega)}{W(\Omega)}= x \right \}.
$$
Optimal shapes for $L^+$ turn out to be $C^{1,1}$ shapes, whereas that for $L^-$ are polygons. For values of $x$ less than $3/\pi$ we are able to characterize optimal shapes for $L^-(x)$: they are rhombi, going from the segment, in the limit as $x\to 0^+$, to the square, for $x=3/\pi$. The key argument here is the minimization of the ratio $y/x$ for $(x,y)\in \mathcal D$, namely the maximization of $F(\Omega):=P^2(\Omega)A(\Omega)/W(\Omega)$ for $\Omega \in \mathcal A$. We mention that the same issue, in the wider class of convex sets, was addressed by G. P\'olya in \cite{Polya}: according to the conjecture, still unsolved, the maximiser of $F$ should be the equilateral triangle.

For all the remaining cases, namely for $L^+(x)$, $x\in (0,1]$, and for $L^-(x)$, $x\in (3/\pi,1]$, optimizers are searched numerically. We look for the optimal shapes corresponding to points on $L^+(x)$ through their support functions. We choose to decompose the support function in Fourier series, the unknowns being the Fourier coefficients.  For the optimal shapes corresponding to points on $L^-(x)$, since we know that they are polygonal, we choose a different strategy: the unknowns being the coordinates of the vertices.

\medskip

The paper is organized as follows. In Section \ref{sec-definitions} we present the notation and we gather the first results: in Proposition \ref{simply} we show that the diagram is simply connected and its boundary is the union of two graphs of functions $L^\pm:(0,1]\to \mathbb R^2$, which have the same limit as $x\to 0^+$ and coincide (only) for $x=1$. The properties of the shapes associated to boundary points are summarized in Theorem \ref{jimmy}. The continuity and monotonicity of $L^\pm$ is investigated in the two subsequent sections, in Proposition \ref{onL-} and \ref{onL+}. Section \ref{sec-ratio} is dedicated to the study of the ratio $P^2A/W$, which allows us to give the explicit expression of $L^-(x)$ for $x\in [0,3/\pi]$. In Section \ref{sec-pentes} we analyse the diagram near its ``corners'', i.e, near $x=0$ and $x=1$. The final section is devoted to numerical shape optimization.

\section{Definitions and first properties}\label{sec-definitions}
\subsection*{Notation} Throughout the paper, we denote by $\x$ and $\y$ the coordinate shape functionals defining the diagram, that is
$$
x(\Omega):= \frac{1}{2 \pi}\frac{A^2(\Omega)}{W(\Omega)}\,,\quad  y(\Omega):= 4 \pi \frac{A(\Omega)}{P^2(\Omega)}.
$$

We use the letter $\mathbb D$ to refer to the disk of radius 1, centered at the origin.

We will also assume that the shapes of $\mathcal A$ are centered at the origin and that the coordinate system is oriented so that the axis of symmetry are the horizontal and vertical axis.

We endow the class of admissible shapes of the complementary Hausodrff distance, denoted by $d_\mathcal H$. Neighborhoods of shapes are intended with respect to the complementary Hausdorff distance.

With the symbol $\fint_\Omega f$ we denote the average of the function $f$ over the set $\Omega$, that is, $\fint_\Omega f = (\int_\Omega f )/(|\Omega|)$.

\begin{definition}\label{smalldef}
Let $\Omega$ be an admissible set. We define \textit{small deformation} of $\Omega$ a family $\{\Omega_\e\}_{\e\in [0,\e_0)}$, $\e_0>0$, with the following properties:
\begin{itemize}
\item[-] for $\e=0$ there holds $\Omega_0=\Omega$
\item[-] the map $\e \mapsto \Omega_\e$ is continuous from $[0,\e_0)$ to the class of admissible shapes $\mathcal A$, with respect to the Hausdorff distance.
\end{itemize}
\end{definition}

\subsection*{Results on $\x$ and $\y$} Here we gather some properties concerning the two functionals $\x$ and $\y$, separately. More precisely, we investigate their bounds and continuity.

\begin{proposition}\label{bounds}
The shape functionals $\x$ and $\y$ take value in the interval $(0,1]$ and the value $1$ is attained only at the disk. In particular, the diagram
$\mathcal{D}$ is contained into the square $(0,1]\times(0,1]$.
\end{proposition}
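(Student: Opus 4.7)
The positivity of both functionals is immediate: every $\Omega\in\mathcal A$ is a bounded open convex set with nonempty interior, hence $A(\Omega),P(\Omega),W(\Omega)$ are all finite and strictly positive, so $x(\Omega),y(\Omega)>0$. The content of the statement is therefore the two upper bounds, and the case of equality.

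The bound $y(\Omega)\le 1$ is nothing but the classical planar isoperimetric inequality $P^2(\Omega)\ge 4\pi A(\Omega)$, with equality if and only if $\Omega$ is a disk. Since disks lie in $\mathcal A$, this gives the sharp constant and identifies the unique equality case.

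For the bound $x(\Omega)\le 1$, the plan is to prove the sharp moment-of-inertia inequality $A^2(\Omega)\le 2\pi W(\Omega)$, with equality iff $\Omega$ is a disk, by a standard rearrangement/bathtub argument. Place $\Omega$ so that its center of gravity sits at the origin (which, by the double symmetry assumption, happens automatically for shapes in $\mathcal A$), and let $B$ be the disk centered at the origin with $|B|=|\Omega|=A$, so $B$ has radius $R=\sqrt{A/\pi}$. A direct computation in polar coordinates gives $W(B)=\pi R^4/2=A^2/(2\pi)$. Then
\[
W(\Omega)-W(B)=\int_{\Omega\setminus B}(x^2+y^2)\,\de x\,\de y-\int_{B\setminus\Omega}(x^2+y^2)\,\de x\,\de y.
\]
Since $|\Omega\setminus B|=|B\setminus\Omega|$, and since $x^2+y^2\ge R^2$ on $\Omega\setminus B$ while $x^2+y^2\le R^2$ on $B\setminus\Omega$, one concludes
\[
\int_{\Omega\setminus B}(x^2+y^2)\,\de x\,\de y\ \ge\ R^2|\Omega\setminus B|\ =\ R^2|B\setminus\Omega|\ \ge\ \int_{B\setminus\Omega}(x^2+y^2)\,\de x\,\de y,
\]
giving $W(\Omega)\ge W(B)=A^2/(2\pi)$, i.e. $x(\Omega)\le 1$. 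Equality forces both inequalities above to be equalities, which in turn forces $|\Omega\setminus B|=0=|B\setminus \Omega|$, so $\Omega=B$ up to a negligible set; since $\Omega$ is open and convex, $\Omega=B$.

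Combining the two sharp bounds and the two equality cases yields the proposition: $\mathcal D\subset(0,1]\times(0,1]$, and the point $(1,1)$ is attained only by the disk. I do not anticipate a real obstacle here; the only point worth flagging is that the moment-of-inertia inequality $A^2\le 2\pi W$ is perhaps less frequently quoted than the isoperimetric one, so it is worth recording its short proof rather than invoking it as a black box.
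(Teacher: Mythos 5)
Your proposal is correct. The positivity and the bound $y(\Omega)\le 1$ are handled exactly as in the paper (the latter being the classical isoperimetric inequality). For the bound $x(\Omega)\le 1$, however, you take a genuinely different route. The paper writes $\Omega$ in polar coordinates about the centroid, so that
$x(\Omega)=\frac{1}{2\pi}\bigl[\int_0^{2\pi}\rho_{max}^2\bigr]^2/\int_0^{2\pi}\rho_{max}^4$,
and concludes by Cauchy--Schwarz, with equality iff $\rho_{max}$ is constant; this is a one-line computation but implicitly uses that $\Omega$ is star-shaped about its centroid (guaranteed here by convexity). Your bathtub/rearrangement comparison with the equal-area disk centered at the centroid proves the same sharp inequality $2\pi W(\Omega)\ge A^2(\Omega)$ without any star-shapedness or convexity: it works for arbitrary measurable sets of finite measure, and your treatment of the equality case (equality in $\int_{\Omega\setminus B}|x|^2\ge R^2|\Omega\setminus B|$ forces $|\Omega\setminus B|=0$ since $\{|x|=R\}$ is null, and two open convex sets agreeing up to a null set coincide) is complete. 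So your argument is slightly more general and self-contained, while the paper's is shorter and stays within the polar-coordinate framework it reuses elsewhere (e.g.\ in Lemma \ref{poly1}); both are valid.
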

\begin{proof}
For every admissible shape $\Omega$ both $x(\Omega)$ and $y(\Omega)$ are strictly positive, implying that $\x >0$ and $\y>0$. The (sharp) upper bound on $\y$ is nothing but the classical isoperimetric inequality. As for $\x$,  this isoperimetric inequality asserting that the ball minimizes the moment of inertia is also well-known.
For sake of completeness, let us give an elementary proof: we use polar coordinates with respect to the center of gravity.
A convex shape $\Omega$ containing the origin can be described as $\Omega=\{(\rho, \theta)\ :\ \theta\in [0,2\pi],\ \rho\in [0,\rho_{max}(\theta)]\}$ for some function $\rho_{max}$. 
Computing the area and moment of inertia in polar coordinates yields:
$$
x(\Omega) 
= \frac{1}{2\pi} \frac{\left[ \int_0^{2\pi} \int_0^{\rho_{max}(\theta)} \rho \mathrm{d}\rho\,\mathrm{d}\theta \right]^2}{\int_0^{2\pi} \int_0^{\rho_{max}(\theta)} \rho^3 \mathrm{d}\rho\mathrm{d}\theta }
=\frac{1}{2\pi} \frac{ \left[ \int_0^{2\pi}  \rho_{max}^2(\theta) \mathrm{d}\theta\right]^2  }{ \int_0^{2\pi} \rho_{max}^4(\theta)\mathrm{d}\theta}.
$$
Finally, using Cauchy-Schwarz inequality, we infer that the last term is bounded above by $1$. The threshold 1 is sharp and it is attained if and only if $\rho_{max}$ is constant, namely if $\Omega$ is a disk. This concludes the proof.
\end{proof}

As already pointed out in the Introduction, the shape functionals $\x$ and $\y$ are scale invariant, since for every $t>0$
$$
P(t\Omega)= t P(\Omega),\quad  A(t \Omega) = t^2 A(\Omega), \quad W(t\Omega)= t^4 W(\Omega).
$$
This allows us to replace, if needed, the class of admissible sets with the subclass of shapes contained into the same compact set $\mathbb R^2$, obtaining the same diagram (for any choice of such a compact set), as we state in the next lemma.
\begin{lemma}\label{compactbox}
Let $K$ be a compact set. Then
$$
\mathcal D=\{(x(\Omega), y(\Omega)) \ :\ \Omega\in \mathcal A\,,\ \Omega \subset K\}\,.
$$
\end{lemma}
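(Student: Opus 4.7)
My plan exploits scale invariance. The inclusion $\supseteq$ is immediate from the definition of $\mathcal D$: every $\Omega\in\mathcal A$ with $\Omega\subset K$ automatically produces a point of the diagram. For the reverse inclusion I would argue as follows. Fix $(x_0,y_0)\in\mathcal D$ and pick any $\Omega\in\mathcal A$ realizing it. Since $\Omega$ is open, convex and of finite area, it is bounded (an open planar convex set containing a ray would have infinite area by convexity), hence sits inside a ball $B(q,R)$, where $q$ denotes its center of gravity.

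Choosing a closed ball $\overline{B(p,r)}\subset K$, which requires $K$ to have non-empty interior---a hypothesis I read as implicit in the statement---and setting $t=r/R$, the candidate
$$
\Omega' := t(\Omega - q) + p
$$
is still open, convex, and equipped with two orthogonal axes of symmetry (all three properties being preserved under homotheties and translations), and satisfies $\Omega'\subset \overline{B(p,r)}\subset K$. The homogeneity relations $A(t\Omega)=t^2 A(\Omega)$, $P(t\Omega)=t P(\Omega)$, $W(t\Omega)=t^4 W(\Omega)$ from the paragraph preceding the lemma, together with the manifest rigid-motion invariance of $A$, $P$ and $W$, then yield $(x(\Omega'),y(\Omega'))=(x_0,y_0)$, closing the argument.

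The only point I would flag as the main (mild) obstacle is reconciling the construction with the convention of the Notation subsection that shapes of $\mathcal A$ are centered at the origin with axes of symmetry along the coordinate axes. Taken literally, this convention would force $p=0$ and place orientation constraints on $K$; I would resolve this either by interpreting the lemma modulo rigid motions (legitimate since $\mathcal D$ is insensitive to them), or equivalently by observing that one may restrict attention, without loss of generality, to compact sets $K$ containing the origin in their interior and invariant under the two coordinate reflections, which is amply sufficient for all later uses.
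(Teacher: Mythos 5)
Your proposal is correct and is exactly the argument the paper intends: the lemma is stated as an immediate consequence of the scale invariance of $\x$ and $\y$ noted in the preceding paragraph, and no separate proof is given. Your additional remarks --- that $K$ must have non-empty interior and that the centering/orientation convention should be read modulo rigid motions --- are sensible clarifications of implicit hypotheses rather than deviations from the paper's route.
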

The advantage of working with shapes in a given box is that the classes of admissible shapes have now some compactness, in the following sense. Exploiting the compactness of the space $\{K \setminus \Omega\ :\ \Omega \in \mathcal A\}$ endowed with the Hausdorff distance (Blaschke selection theorem), we deduce that for every sequence $\Omega_n\subset K$, $\Omega_n \in \mathcal A$, there exists a subsequence (not relabeled) such that 
\begin{itemize}
\item{} either $\Omega_n\to \Omega\in \mathcal A$, with respect to the complementary Hausdorff distance,
\item{} or $\overline \Omega_n$ collapses to a segment or shrinks to a point.
\end{itemize}
\begin{proposition}\label{continuity}
The shape functionals $\x$ and $\y$ are continuous in $\mathcal A$ with respect to the complementary Hausdorff convergence.
\end{proposition}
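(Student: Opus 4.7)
The plan is to reduce the continuity of $\x$ and $\y$ to the continuity of the three underlying functionals $A$, $P$, and $W$, and then to exploit the convexity assumption built into $\mathcal A$. Since $A(\Omega)$, $P(\Omega)$, and $W(\Omega)$ are all strictly positive on $\mathcal A$, continuity of these three functionals immediately yields continuity of the quotients $A^2/(2\pi W)$ and $4\pi A/P^2$. So it is enough to show that if $\Omega_n \to \Omega$ in $\mathcal A$ with respect to $d_{\mathcal H}$, then $A(\Omega_n)\to A(\Omega)$, $P(\Omega_n)\to P(\Omega)$, and $W(\Omega_n)\to W(\Omega)$.

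First I would observe that since $\Omega \in \mathcal A$ is open and convex with nonempty interior, complementary Hausdorff convergence of $\Omega_n$ to $\Omega$ is equivalent to Hausdorff convergence of the closures $\overline{\Omega}_n$ to $\overline{\Omega}$; in particular, by Lemma \ref{compactbox} we may fix a compact box $K$ containing all the $\overline{\Omega}_n$ and $\overline{\Omega}$. For the area and moment of inertia I would use the pointwise convergence $\chi_{\Omega_n}(p) \to \chi_{\Omega}(p)$ for every $p \notin \partial \Omega$ (which follows from the definition of Hausdorff convergence together with the openness of $\Omega$ and of the exterior of $\overline \Omega$), combined with the fact that the boundary of a planar convex body has zero Lebesgue measure. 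Dominated convergence applied to $\chi_{\Omega_n}$ and $(x^2+y^2)\chi_{\Omega_n}$, with dominating functions $\chi_K$ and $(x^2+y^2)\chi_K$, then yields $A(\Omega_n)\to A(\Omega)$ and $W(\Omega_n)\to W(\Omega)$.

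The main technical point is the continuity of the perimeter. Unlike $A$ and $W$, the perimeter is only lower semicontinuous in general, so one has to use convexity in an essential way. I would invoke the classical fact that on the class of convex bodies in $\mathbb R^2$ the perimeter is continuous with respect to Hausdorff convergence: this follows either from Cauchy's formula $P(\Omega)=\int_0^{\pi} w(\theta)\,\mathrm d\theta$ expressing $P$ as the integral of the width function (which converges uniformly under Hausdorff convergence of convex bodies), or equivalently by writing $P(\Omega) = \int_{0}^{2\pi} h_\Omega(\theta)\,\mathrm d\theta$ in terms of the support function $h_\Omega$ and using that $h_{\Omega_n}\to h_\Omega$ uniformly on $S^1$. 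Either route delivers $P(\Omega_n)\to P(\Omega)$.

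Finally, combining the three convergences and the strict positivity of $A$, $P$, $W$ on $\mathcal A$, I conclude
$$
\x(\Omega_n)=\frac{A^2(\Omega_n)}{2\pi W(\Omega_n)}\longrightarrow \frac{A^2(\Omega)}{2\pi W(\Omega)}=\x(\Omega)\,,\qquad \y(\Omega_n)=\frac{4\pi A(\Omega_n)}{P^2(\Omega_n)}\longrightarrow \frac{4\pi A(\Omega)}{P^2(\Omega)}=\y(\Omega)\,,
$$
proving the proposition. The only genuinely nontrivial ingredient is the continuity of $P$ under Hausdorff convergence of convex bodies; the area and moment of inertia parts are elementary measure-theoretic consequences of convexity of $\Omega$ together with the fact that $\partial\Omega$ is Lebesgue-negligible.
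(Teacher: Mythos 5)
Your proof is correct and follows essentially the same route as the paper: reduce everything to the continuity of $A$, $P$, $W$ among convex sets under Hausdorff convergence, then use the strict positivity of the denominators to pass to the quotients. The only difference is that the paper simply cites the continuity of $A$, $P$, $W$ as a known fact, whereas you supply the standard justifications (negligibility of convex boundaries plus dominated convergence for $A$ and $W$, and Cauchy's formula via the support function for $P$), which is a welcome but not essentially different elaboration.
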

\begin{proof}
Take $\Omega_n \to \Omega$ in $\mathcal A$, with respect to the Hausdorff convergence, as $n\to \infty$. By continuity of $A$, $P$, $W$ among convex sets, we have $A(\Omega_n) \to A(\Omega)$, $P(\Omega_n)\to P(\Omega)$, $W(\Omega_n)\to W(\Omega)$. If $\Omega$ has non empty interior, we infer that all the quantities involved (in particular $P(\Omega)$ and $W(\Omega)$) do not vanish for $n$ large enough, allowing to obtain the continuity of $\x$ and $\y$ at $\Omega$. This concludes the proof.
\end{proof}

As we have already shown in Proposition \ref{bounds}, the origin does not belong to the diagram. However, it has an important role in the description of its closure.

\begin{proposition}\label{closure}
The origin $O$ belongs to the closure of the diagram. More precisely, $\Diagc=\Diag \cup \{O\}$.
\end{proposition}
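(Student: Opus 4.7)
The plan is to establish the two inclusions $\mathcal{D}\cup\{O\}\subseteq \Diagc$ and $\Diagc\subseteq \mathcal{D}\cup\{O\}$ separately. The first inclusion reduces, since $\mathcal{D}\subseteq \Diagc$ is trivial, to producing a sequence in $\mathcal{D}$ converging to the origin. For this I would take the family of thin rectangles $R_\varepsilon:=(-1,1)\times(-\varepsilon,\varepsilon)$, which belong to $\mathcal{A}$ for every $\varepsilon>0$. Elementary computations of $A(R_\varepsilon)$, $P(R_\varepsilon)$, and $W(R_\varepsilon)$ show that both $\x(R_\varepsilon)$ and $\y(R_\varepsilon)$ are of order $\varepsilon$ as $\varepsilon\to 0^+$, so $(\x(R_\varepsilon),\y(R_\varepsilon))\to O$.

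For the reverse inclusion, take a point $p\in\Diagc\setminus\mathcal{D}$, write $p=\lim_n(\x(\Omega_n),\y(\Omega_n))$ with $\Omega_n\in\mathcal{A}$, and argue by compactness. Using the scale invariance recorded in Lemma \ref{compactbox}, I would first rescale each $\Omega_n$ so that its diameter equals $1$; this places the whole sequence inside a fixed compact box. By the Blaschke selection theorem (in the form recalled after Lemma \ref{compactbox}), up to a subsequence one of the following occurs: either $\Omega_n\to\Omega\in\mathcal{A}$ in complementary Hausdorff distance, or $\overline{\Omega_n}$ collapses to a segment (the collapse-to-a-point alternative is precluded by the diameter normalization). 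In the first case, Proposition \ref{continuity} yields $p=(\x(\Omega),\y(\Omega))\in\mathcal{D}$, contradicting $p\notin\mathcal{D}$. Hence only the collapse case is possible, and it remains to prove $p=O$.

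For the collapse case, up to a rotation consistent with the symmetry requirement, the limit segment can be taken to be $I=[-L,L]\times\{0\}$ with $L=1/2$. Continuity of $P$ on convex bodies under Hausdorff convergence (valid also when the limit is lower-dimensional) gives $P(\Omega_n)\to 4L=2$, while clearly $A(\Omega_n)\to 0$, so $\y(\Omega_n)\to 0$. To obtain $\x(\Omega_n)=A^2/(2\pi W)\to 0$ I would bound $W$ from below in terms of $A$. Writing the horizontal slice of $\Omega_n$ at height $t$ as $[-a_n(t),a_n(t)]$, the function $a_n$ is concave, nonnegative, and by the Hausdorff convergence satisfies $a_n(0)\to L$; then
$$
\frac{W(\Omega_n)}{A(\Omega_n)}\ge \frac{\int x^2\,\mathrm{d}x\mathrm{d}y}{A(\Omega_n)}=\frac{\int a_n^3/3}{\int a_n}\ge c>0
$$
for $n$ large, where the last inequality follows from the concavity of $a_n$ and the fact that $a_n(0)$ is bounded below. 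Consequently $\x(\Omega_n)\le A(\Omega_n)/(2\pi c)\to 0$, and $p=O$ as required.

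The main obstacle I anticipate is the lower bound $W\gtrsim A$ in the degenerate case; this is where the double-symmetry hypothesis is crucial, since it forces the ``horizontal spread'' of $\Omega_n$ to remain of order $L$, and hence $x^2$ to contribute non-trivially to $W$ even as $\Omega_n$ flattens. The remaining ingredients are a routine application of Blaschke compactness together with the continuity statement of Proposition \ref{continuity}.
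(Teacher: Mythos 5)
Your proof is correct and follows essentially the same route as the paper: explicit thin rectangles for the inclusion $O\in\overline{\mathcal D}$, then Blaschke selection combined with a rescaling normalization to rule out shrinking to a point, and convexity estimates in the segment-collapse case. The only cosmetic difference is in the collapse case, where you bound $W/A$ from below by slicing and concavity (which amounts to comparing with the inscribed rhombus) and invoke continuity of the perimeter, whereas the paper sandwiches $\Omega_n$ between an inscribed rhombus and a circumscribed rectangle and concludes from the explicit formulas for those shapes.
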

\begin{proof}
We start the proof with a direct computation of $\x$ and $\y$ on two particular families of shapes: rectangles $R_\ell$ and rhombi $S_\ell$, with semiaxis $1$ and $\ell$, with $\ell >0$. It is straightforward that:
\begin{align*}
& A(R_\ell) = 4 \ell , \quad P(R_\ell)=4(1+ \ell), \quad W(R_\ell)= \frac43 (\ell + \ell^3),
\\
& A(S_\ell) = 2 \ell , \quad P(S_\ell)=4\sqrt{1 + \ell^2}, \quad W(S_\ell)= \frac13 (\ell+\ell^3) , 
\end{align*}
so that
$$
x(R_\ell)=\frac{6}{\pi} \frac{\ell}{1+ \ell^2},  \quad y(R_\ell)=  \pi \frac{\ell}{(1 + \ell)^2},\quad x(S_\ell)=\frac{6}{\pi} \frac{\ell}{1+\ell^2}, \quad y(S_\ell)= \frac{\pi}{2}\frac{\ell}{1+\ell^2}.
$$
The two constructed families of points of the diagram converge to the origin as $\ell \to 0$ or $+\infty$. The former limit corresponds to shapes (rectangles/rhombi) collapsing to horizonral segments, the latter to shapes (rectangles/rhombi) collapsing to vertical segments. This implies that the origin is in the closure of the diagram. 

Let $\{Q_n\}\subset \Diag$ be a sequence converging to a point $Q\in \mathbb R^2$ (in norm). Let $\Omega_n \in \mathcal A$ be a sequence of associated admissible shapes. Without loss of generality (see Lemma \ref{compactbox}), we may assume that the sequence is uniformly bounded. Then two situations may occur: either a subsequence converges with respect to the complementary Hausdorff distance to some admissible shape $\Omega$, or $\overline{\Omega}_n$ collapses/shrinks to a segment/point. In the former, we infer that, by continuity of $\x$ and $\y$ in $\mathcal A$, $Q=(x(\Omega), y(\Omega))\in \Diag$. Let us examine the second situation. The case in which the chosen $\Omega_n$ shrinks to a point can be easily treated as before, since we may replace every $\Omega_n$ with a homothetic copy $\hat{\Omega}_n$, obtaining a sequence which has inradii bounded from below by a positive constant. The previous result then applies to $\hat{\Omega}_n$: we find a subsequence (not relabaled) such that $\hat{\Omega}_n\to \Omega \in \mathcal A$ and $Q_n=(x(\hat{\Omega}_n), y(\hat{\Omega}_n))\to Q=(x(\Omega), y(\Omega))\in \mathcal D$.
The last case that we have to consider is that of $\{\Omega_n\}$ thin domains, collapsing to a segment. Without loss of generality (by the scale invariance and the invariance under rotations of the functionals) may assume that $\Omega_n$ all cross the horizontal axis exactly in the segment of length 1. Moreover, they all cross the vertical axis on a centered segment of some length $\ell(n)$ which goes to 0 as $n\to \infty$. We easily obtain (by convexity) that $\Omega_n$ is contained into the centered rectangle $R_{\ell(n)}$ with sides $2$ and $2\ell(n)$ and it contains the centered rhombus $S_{\ell(n)}$ with semiaxis of length $1$ and $\ell(n)$. In particular, 
\begin{align*}
& 0< x(\Omega_n) \leq \frac{A^2(R_{\ell(n)})}{2\pi W(S_{\ell(n)})} = \frac{24}{\pi} \frac{ \ell(n)}{1+ \ell^2(n)}\to 0
\\
& 0< y(\Omega_n) \leq \frac{4\pi A(R_{\ell(n)})}{P^2(S_{\ell(n)})} = \pi \frac{\ell(n)}{(1+\ell^2(n))} \to 0.
\end{align*}
This means that the limit point is $Q=(0,0)$.
\end{proof}
\begin{proposition}\label{arcs}
The diagrams $\mathcal D$ is connected by arcs.
\end{proposition}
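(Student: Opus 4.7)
The plan is to prove the stronger statement that any two points of $\mathcal D$ can be joined by a continuous arc in $\mathcal D$, via an explicit Minkowski interpolation. Fix $Q_0, Q_1 \in \mathcal D$, realized as $Q_i = (x(\Omega_i), y(\Omega_i))$ for some $\Omega_0, \Omega_1 \in \mathcal A$. Since the area, perimeter and moment of inertia are invariant under rigid motions, I may rotate $\Omega_1$, without changing its image in the diagram, so that its two axes of symmetry coincide with those of $\Omega_0$, i.e., with the coordinate axes.

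The joining path will be the Minkowski convex combination
$$
\Omega_t := (1-t)\,\Omega_0 + t\,\Omega_1 \subset \mathbb R^2, \qquad t\in [0,1].
$$
I would first verify that $\Omega_t \in \mathcal A$ for every $t$: convexity is immediate from the convexity of $\Omega_0$ and $\Omega_1$; openness follows because the Minkowski sum of an open set with any non-empty set is open (reducing to $t \in \{0,1\}$ trivially, since $0\cdot\Omega_i = \{0\}$); and if $\sigma$ denotes either of the two orthogonal reflections fixing both $\Omega_0$ and $\Omega_1$, then $\sigma((1-t)p + tq) = (1-t)\sigma(p) + t\sigma(q) \in \Omega_t$ for every $p\in\Omega_0,\, q\in\Omega_1$, so $\Omega_t$ inherits both axes of symmetry.

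Next, I would show that $t\mapsto \Omega_t$ is Lipschitz continuous from $[0,1]$ into $\mathcal A$ with respect to the classical Hausdorff distance, via the direct estimate that every point of $\Omega_s$ lies within distance $|s-t|\cdot \sup_{a\in\Omega_0,\, b\in\Omega_1}|a-b|$ of a point of $\Omega_t$, and vice versa. Since the whole family is contained in a fixed compact set, this is equivalent to continuity with respect to the complementary Hausdorff distance (cf.\ Lemma \ref{compactbox}). Proposition \ref{continuity} then yields that $t\mapsto (x(\Omega_t), y(\Omega_t))$ is a continuous arc in $\mathcal D$ from $Q_0$ to $Q_1$.

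I do not expect any serious obstacle here: the whole argument reduces to a standard Minkowski-interpolation scheme, and the only point requiring genuine attention is the alignment of the symmetry axes of $\Omega_0$ and $\Omega_1$ at the very start, which is precisely what guarantees that $\Omega_t$ remains in $\mathcal A$ along the whole path. The same construction will also be a natural building block towards the simple-connectedness statement announced in Proposition \ref{simply}.
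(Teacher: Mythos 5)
Your proposal is correct and follows essentially the same route as the paper: a Minkowski convex combination $\Omega_t$ of the two shapes, with the symmetry axes aligned beforehand so that $\Omega_t$ stays in $\mathcal A$, followed by the continuity of $\x$ and $\y$ from Proposition \ref{continuity}. The only cosmetic difference is that you establish the Hausdorff continuity of $t\mapsto\Omega_t$ by a direct Lipschitz estimate, whereas the paper invokes the linearity of support functions under Minkowski sums; both are standard and equivalent here.
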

\begin{proof}
Let $Q_0$ and $Q_1$ be two points of $\Diag$. Take $\Omega_0$ and $\Omega_1$ two associated shapes. Consider the 1-parameter family of shapes obtained with the following Minkowski sum:
$$
\Omega_t:=t \Omega_1 \oplus (1-t) \Omega_0:=\{  t a + (1-t)b \ :\ a\in \Omega_1\,,\ b\in \Omega_0  \}, \quad t \in [0,1].
$$
The map $t\mapsto \Omega_t$ preserves convexity and it is continuous with respect to the (complementary) Hausdorff distance. We know that Hausdorff convergence is equivalent to uniform convergence of support functions (see \cite{Sch}) 
and that the support function of a Minkowski sum is the combination of support functions. If we work in $\mathcal A$, in order to preserve also the double symmetry, we have to choose the orientation of $\Omega_0$ and $\Omega_1$ so that their (two orthogonal) axis of symmetry coincide with the axis of the coordinate system. This implies that $\Omega_t$ is an admissible shape for every $t\in [0,1]$.
In view of Proposition  \ref{continuity}, we infer that the map $t\mapsto (x(\Omega_t), y(\Omega_t))$ is continuous in $\Diag$ and connects $Q_0$ to $Q_1$ with an arc. This concludes the proof.
\end{proof}
\subsection*{The upper and lower boundaries}
In order to describe the boundary of the diagram, we introduce the two functions: given $x\in (0,1]$ we set
\begin{equation}\label{defLplus}
L^+(x):=\sup \{ y(\Omega)\ :\   \Omega \in \mathcal A, x(\Omega)=x \};
\end{equation}
\begin{equation}\label{defLmoins}
L^-(x):=\inf \{ y(\Omega)\ :\   \Omega \in \mathcal A, x(\Omega)=x \}.
\end{equation}
These functions satisfy the following properties.

\begin{proposition}\label{existence}
The supremum defining $L^+$ and the infimum defining $L^-$ are attained, namely they are a maximum and a minimum, respectively. The functions $L^\pm$ coincide at $x=1$ and in the limit as $x\to 0^+$.
\end{proposition}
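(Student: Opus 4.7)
The plan is to prove existence of optimal shapes for $L^\pm(x)$ by the direct method of the calculus of variations, using scale invariance together with Blaschke's selection theorem, and then to read off the coincidence at $x=1$ from Proposition \ref{bounds} and the coincidence at $x\to 0^+$ from the degeneracy analysis already performed in the proof of Proposition \ref{closure}.

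\textbf{Step 1: attainment of $L^\pm(x)$ for fixed $x\in(0,1]$.} I fix $x\in(0,1]$ and take a sequence $\Omega_n\in \mathcal{A}$ with $x(\Omega_n)=x$ and $y(\Omega_n)\to L^+(x)$ (resp.\ $L^-(x)$). Since $x(\cdot)$ and $y(\cdot)$ are scale invariant, I may rescale each $\Omega_n$ and then appeal to Lemma \ref{compactbox} to assume $\Omega_n\subset K$ for a fixed compact set $K$. By the Blaschke selection dichotomy recalled right after Lemma \ref{compactbox}, up to a subsequence either $\Omega_n\to\Omega\in\mathcal{A}$ in complementary Hausdorff distance, or $\overline{\Omega}_n$ collapses to a segment or shrinks to a point. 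After the preliminary rescaling I can exclude shrinking to a point (this is handled exactly as in the proof of Proposition \ref{closure}, by passing to a homothetic copy with inradius bounded below). In the ``collapse to a segment'' case, the estimates in the last part of the proof of Proposition \ref{closure} yield $x(\Omega_n)\to 0$, contradicting $x(\Omega_n)=x>0$. Hence $\Omega_n\to\Omega\in\mathcal{A}$ with $\Omega$ of nonempty interior, and by Proposition \ref{continuity} I obtain $x(\Omega)=x$ and $y(\Omega)=\lim_n y(\Omega_n)$, proving that $\Omega$ realizes $L^+(x)$ (resp.\ $L^-(x)$).

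\textbf{Step 2: coincidence at $x=1$.} By Proposition \ref{bounds}, $x(\Omega)=1$ forces $\Omega$ to be a disk. The disk also satisfies $y(\mathbb{D})=1$, so the admissible class in the definitions of $L^\pm(1)$ is reduced to a single element and
\[
L^+(1)=L^-(1)=1.
\]

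\textbf{Step 3: coincidence in the limit $x\to 0^+$.} Since $0<L^-(x)\le L^+(x)$ on $(0,1]$, it is enough to show $L^+(x)\to 0$ as $x\to 0^+$. I take any sequence $x_n\to 0^+$ and pick $\Omega_n\in\mathcal{A}$ maximizing $L^+(x_n)$, so that $y(\Omega_n)=L^+(x_n)$ and $x(\Omega_n)=x_n\to 0$. Running once more the compactness dichotomy from Step 1 (after the same rescaling), a non-degenerate limit $\Omega\in\mathcal{A}$ is impossible because continuity of $x(\cdot)$ would give $x(\Omega)=0$, against Proposition \ref{bounds}. Therefore $\overline{\Omega}_n$ must collapse to a segment, and the proof of Proposition \ref{closure} (the sandwich between a rectangle $R_{\ell(n)}$ and a rhombus $S_{\ell(n)}$) provides the companion bound $y(\Omega_n)\to 0$, whence $L^+(x_n)\to 0=\lim_{x\to 0^+}L^-(x)$.

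\textbf{Main obstacle.} The delicate point is ruling out the degenerate limits in the Blaschke alternative. The argument hinges on the observation, implicit in the proof of Proposition \ref{closure}, that collapsing to a segment is quantitatively accompanied by $x\to 0$; once this is stated as a one-line estimate (via the rectangle/rhombus sandwich), both the attainment and the $x\to 0^+$ limit follow by the same compactness-plus-continuity template.
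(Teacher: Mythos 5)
Your proposal is correct and follows essentially the same route as the paper: a direct-method argument via Lemma \ref{compactbox} and the Blaschke selection dichotomy for attainment, Proposition \ref{bounds} for the coincidence at $x=1$, and the rectangle/rhombus sandwich from the proof of Proposition \ref{closure} for the limit $x\to 0^+$. The only difference is that you spell out explicitly the exclusion of the degenerate limits (which the paper dispatches in a parenthetical), and this added detail is sound.
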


\begin{proof}
Let $x\in (0,1]$ be fixed. Let $y(\Omega_n)$ be a maximizing sequence for $L^+(x)$, with $\Omega_n\in \mathcal A$ and such that $x(\Omega_n) = x $ for every $n$. By Blaschke selection theorem (see also Lemma \ref{compactbox}), we may extract a subsequence (not relabeled), converging to some $\Omega \in \mathcal A$ (the fact that $x\neq 0$ ensures that the subsequence does not collapse to a segment). By continuity of $\x$ and $\y$, we infer that the limit shape satisfies $x(\Omega)=x$ and $y(\Omega)=\lim_{n\to \infty} y(\Omega_n)$. Since by assumption $\lim_{n\to \infty} y(\Omega_n)=L^+(x)$, we conclude that $\Omega$ is a maximizer. The same strategy applies for the existence of a minimizer defining $L^-(x)$.

The fact that $L^\pm$ coincide at 1 and 0 (the latter as a limit), comes from Propositions \ref{bounds}, \ref{continuity}, and \ref{closure}.
\end{proof}

We are now in a position to state the next result.

\begin{proposition}\label{simply}The diagram $\mathcal D$ is simply connected.
\end{proposition}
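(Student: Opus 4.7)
My plan is to show that $\mathcal D$ coincides with the closed region bracketed between the graphs of $L^-$ and $L^+$ over $(0,1]$, namely
\[
\mathcal D \;=\; \{(x,y)\in\mathbb R^2 : x\in(0,1],\ L^-(x)\le y\le L^+(x)\}.
\]
Together with the fact that $L^\pm$ share the limit $0$ as $x\to 0^+$ and both equal $1$ at $x=1$ (from Propositions \ref{bounds} and \ref{existence}), this identifies $\mathcal D$ as a topological leaf/disk, from which simple connectedness will follow.

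The inclusion $\subset$ is immediate from the definitions \eqref{defLplus}--\eqref{defLmoins}. To prove the opposite inclusion, I fix $x_0\in(0,1]$ and $y_0\in(L^-(x_0),L^+(x_0))$ and must construct $\Omega\in\mathcal A$ with $(x(\Omega),y(\Omega))=(x_0,y_0)$. Choosing optimisers $\Omega^\pm$ of $L^\pm(x_0)$ from Proposition \ref{existence}, I would consider the two-parameter Minkowski family
\[
\Omega_{s,\tau}:=(1-\tau)\bigl((1-s)\Omega^-\oplus s\Omega^+\bigr)\oplus\tau\D,\qquad(s,\tau)\in[0,1]^2,
\]
which stays in $\mathcal A$ since Minkowski addition preserves convexity and the two orthogonal axes of symmetry (exactly as in the proof of Proposition \ref{arcs}). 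By Proposition \ref{continuity} the map $F(s,\tau):=(x(\Omega_{s,\tau}),y(\Omega_{s,\tau}))$ is a continuous map $[0,1]^2\to\mathcal D$, and its boundary restriction traces a closed loop in $\mathcal D$ meeting $(x_0,L^-(x_0))$, $(x_0,L^+(x_0))$ and $(1,1)=\Phi(\D)$. A planar degree / winding-number argument around $(x_0,y_0)$, combined with the monotone drift of $\Phi:=(x,y)$ toward $(1,1)$ along the two ``radial'' edges $s\in\{0,1\}$ (plausible from the linearity of the perimeter under Minkowski sums and from Brunn--Minkowski-type concavity for the area and the moment of inertia), would then place $(x_0,y_0)$ in the image of $F$ and thereby produce the desired $\Omega$.

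Once this vertical-slice description is in hand, simple connectedness of $\mathcal D$ follows by the standard argument: any simple loop $\gamma\subset\mathcal D$ bounds a region $D\subset\mathbb R^2$ by Jordan's theorem; for every $p=(x_0,y_0)\in D$ the vertical line through $x_0$ meets $\gamma$ in (at least) two points $(x_0,a),(x_0,b)$ with $a<y_0<b$, so by the slice-interval property $\{x_0\}\times[a,b]\subset\mathcal D$, hence $p\in\mathcal D$, and $\gamma$ is null-homotopic.

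The main obstacle is precisely the winding-number step in the slice-filling argument: one has to check that the image of $\partial[0,1]^2$ under $F$ genuinely encircles each interior target $(x_0,y_0)$, which requires controlling the potentially non-monotone excursions of $x(\Omega_{s,0})$ along the bottom Minkowski edge. A sturdier fallback, if this direct degree calculation proves delicate, is first to establish upper-semicontinuity of $L^+$ and lower-semicontinuity of $L^-$ via Blaschke compactness together with Proposition \ref{continuity}, and then to fill the vertical slices by iterated Minkowski interpolations on finer and finer subdivisions, passing to the limit by compactness.
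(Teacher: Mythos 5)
Your overall strategy --- reduce simple connectedness to the ``vertical slice'' description $\mathcal D=\{(x,y): x\in(0,1],\ L^-(x)\le y\le L^+(x)\}$ and fill the slices by a winding-number argument applied to loops built from Minkowski interpolation --- is the same as the paper's. But the proof has a genuine gap exactly where you flag it: the degree computation is never carried out, and the specific two-parameter family $\Omega_{s,\tau}$ you propose does not obviously have the required property that $F(\partial[0,1]^2)$ encircles $(x_0,y_0)$. The ``monotone drift toward $(1,1)$'' along the edges $s\in\{0,1\}$ is unsupported: Brunn--Minkowski gives concavity of $A^{1/2}$ and linearity of $P$ under Minkowski sums, but there is no analogous inequality for $W$ that controls $x(\Omega)=A^2/(2\pi W)$ along $\tau\mapsto(1-\tau)\Omega^\pm\oplus\tau\D$, so these edges may cross the line $x=x_0$ repeatedly; likewise the bottom edge $s\mapsto(1-s)\Omega^-\oplus s\Omega^+$ joins two points with the \emph{same} abscissa $x_0$ and can oscillate on both sides of that line, so nothing forces a nonzero winding number. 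Since filling the slice is precisely the content of the proposition (simple connectedness then follows easily, as in your last paragraph and in the paper), the essential step is missing.

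It is instructive to compare with how the paper gets around this. There the loops are built from pairs of shapes with the same \emph{ordinate} $\overline y$ (equivalently, equal area and equal perimeter after normalization), closed up by a horizontal segment at height $\overline y$. The key quantitative input is the confinement estimate \eqref{claim}: linearity of $P$ and Brunn--Minkowski for $A$ trap the Minkowski arc in the strip $\overline y\le y(\Omega_t)\le\min(1,4\overline y)$, so the loop's position relative to the putative hole $Q$ is controlled by the single parameter $\overline y$. The argument is then completed not by computing a degree directly, but by a contradiction at $\overline y:=\inf J$, where $J$ is the set of levels admitting a loop with nonzero winding number about $Q$; this requires a separate uniform-convergence lemma (Step~2 of the paper's proof) showing that Hausdorff convergence of the generating pairs implies uniform convergence of the loops, plus an approximation result (\cite[Corollary 3.6]{FL}) producing admissible pairs at nearby levels. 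If you want to salvage your slice-filling route, you would need an analogue of these two ingredients --- a confinement estimate for your family and a stability statement for the winding number --- neither of which is supplied by your current construction.
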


\begin{proof} The proof is borrowed from \cite[Theorem 3.14]{FL}, in which the authors deal with the Blaschke-Santal\'o diagram of volume, perimeter, and first Dirichlet eigenvalue of the Laplacian $\lambda_1$. Here $\lambda_1$ is replaced by the moment of inertia $W$, which is a more tractable functional. For the benefit of the reader, let us summarize the main steps of the proof.

{\it Step 1. Loops in the plane.} Let $Q_0, Q_1 \in \mathcal D$ be two points of the diagram with the same y-coordinate, say $\overline{y}\in ]0,1[$. We construct a closed (continuous) curve $\Gamma:[0,2]\to \mathbb R^2$ going first from $Q_0$ to $Q_1$ and then back from $Q_1$ to $Q_0$. In order to present the construction, let us consider $\Omega_0, \Omega_1\in \mathcal A$ associated to $Q_0$ and $Q_1$, respectively. Let $\Omega_t$, $t\in [0,1]$, denote the normalized Minkowski sum
$$
\Omega_t:= \frac{t \Omega_1 \oplus (1-t) \Omega_0}{A^{1/2}(t \Omega_1 \oplus (1-t) \Omega_0)}.
$$
Without loss of generality, up to a rotation or $\pi/2$, we may assume that the two semi-axes are chosen so that the horizontal one is greater than the vertical one. As we have already shown in Proposition \ref{arcs}, $\Omega_t\in \mathcal A$. We set
$$
\Gamma_{\Omega_0, \Omega_1}(t):=\left\{ 
\begin{array}{lll}
(x(\Omega_t), y(\Omega_t))\quad & \hbox{if }t\in [0,1],
\\
((2-t) x(\Omega_1) + (t-1)x(\Omega_0) ,\overline{y}) \quad & \hbox{if }t\in [1,2].
\end{array}
\right.
$$
The curve is clearly closed and continuous (see the proof of Proposition \ref{arcs}). For $t\in [0,1]$ the support is contained in the diagram (see Proposition \ref{arcs}), whereas for $t\in [1,2]$ it is a horizontal segment, not necessarily contained into the diagram.

We claim that the curve is contained in the stripe $\mathbb R \times [\overline{y}, \min(1, 4\overline{y})]$. This is clearly true for the horizontal part. We only need to verify that
\begin{equation}\label{claim}
\overline{y} \leq y(\Omega_t) \leq \min(1, 4\overline{y}) \quad \forall t \in [0,1].
\end{equation}

Let us prove the first inequality. Without loss of generality, we may assume that $A(\Omega_0)=A(\Omega_1)=1$. In particular $P(\Omega_0)=P(\Omega_1)=\sqrt{4\pi /\overline{y}}=:\overline{p}$. Thanks to the linearity of the perimeter for the Minkowski sum, as well as the Brunn-Minkowski inequality for the area, see \cite{Sch}, we have
\begin{align*}
& P(t \Omega_1 \oplus (1-t) \Omega_0) = t P(\Omega_0) + (1-t) P(\Omega_1)  = \overline{p},
\\
& 
A^{1/2}(t \Omega_1 \oplus (1-t) \Omega_0)\geq t A^{1/2}(\Omega_1 ) + (1-t) A^{1/2}(\Omega_0) = 1,
\end{align*}
which immediately gives the desired estimate.
Let us now prove the second inequality in \eqref{claim} (the upper bound 1 is trivial). Let $\alpha_i$ and $\beta_i$ denote the lengths of the semi-axis of $\Omega_i$, $i=0,1$. By convexity, since $\Omega_i$ contains a rhombus and is contained into a rectangle, we infer that $A(\Omega_i) \geq 2 \alpha_i \beta_i$ and $4 \sqrt{\alpha_i^2 + \beta_i^2}\leq P(\Omega_i) \leq 4(\alpha_i + \beta_i)$. In particular, recalling that by construction $\alpha_i \geq \beta_i$, $A(\Omega_i)=1$ and $P(\Omega_i)=\overline{p}$, we deduce that for $i=0,1$,
$$
\beta_i \leq \frac{1}{2\alpha_i} \leq \frac{4}{\overline{p}}, \quad \alpha_i \leq \frac{\overline{p}}{4}.
$$
The previous inequalities allow us to deduce the following upper bound on the area of the (non-normalized) Minkowski sum:
$$
A(t\Omega_1 \oplus (1-t) \Omega_0) \leq 4 (t \alpha_1 + (1-t) \alpha_0) (t \beta_1 + (1-t) \beta_0) \leq 4,
$$
so that 
$$
y(\Omega_t)=4 \pi \frac{1}{P^2(\Omega_t)} = 4 \pi \frac{A(t\Omega_1 \oplus (1-t) \Omega_0)}{{\overline{p}}^2} \leq 4 \overline{y}.
$$

{\it Step 2. Sequences of loops.} In this step we show that Hausdorff convergence of shapes entails uniform convergence of associated loops. Let us write the statement. Let $\Omega_0$ and $\Omega_1$ be two admissible shapes, different from the disk, satisfying
$$
A(\Omega_0)=A(\Omega_1)=1, \quad P(\Omega_0)=P(\Omega_1)=\overline{p}.
$$
Assume to have two sequences of admissible shapes $\{\Omega_{0,1}\}_{n\in \mathbb N}$ and $\{\Omega_{1,n}\}_{n\in \mathbb N}$ satisfying the following properties: for every $n\in \mathbb N$
$$
A(\Omega_{0,n})= A(\Omega_{1,n}) = 1, \quad P(\Omega_{0,n})=P(\Omega_{1,n}) =:p_n
$$
and, in the limit as $n\to \infty$, 
$$
\Omega_{0,n} \to \Omega_0, \quad\Omega_{1,n} \to \Omega_1
$$
with respect to the complementary Hausdorff distance. Step 1 allows us to construct a closed path associated to $\Omega_0$ and $\Omega_1$, and a family of closed paths associated to the pairs $\Omega_{0,n},\Omega_{1,n}$. For brevity, we set 
$\Gamma(t):=\Gamma_{\Omega_0, \Omega_1}(t)$ and $\Gamma_n(t):=\Gamma_{\Omega_{0,n},\Omega_{1,n}}(t)$. We claim that for every $\e>0$ there exists $n_\e\in \mathbb N$ such that 
$$
\forall n\geq n_\e\,,\ \forall t\in [0,2] \,,\ \quad \|\Gamma(t) - \Gamma_n(t) \|< \e.
$$
For $t\in [1,2]$ we have
$$
\|\Gamma(t) - \Gamma_n(t) \| \leq 4 \pi \left|  \frac{1}{\overline{p}^2} - \frac{1}{p_n^2}\right| + \frac{1}{2\pi} \left| \frac{1}{W(\Omega_1)} -\frac{1}{W(\Omega_{1,n})} \right| + \frac{1}{2\pi} \left| \frac{1}{W(\Omega_0)} -\frac{1}{W(\Omega_{0,n})} \right|.
$$
The right-hand side does not depend on $t$ and, thanks to the Hausdorff convergence of the shapes and to the continuity of the shape functionals under study, it is arbitrarily small for $n$ large enough. 
For $t\in [0,1]$ we have
\begin{equation}\label{estigamma}
\|\Gamma(t) - \Gamma_n(t) \| \leq | x(\Omega_{t}) - x(\Omega_{n,t})| + | y(\Omega_{t}) - y(\Omega_{n,t})|.
\end{equation}
Set
$$
w_n(t):= W(t\Omega_{1,n} \oplus (1-t) \Omega_{0,n}), \quad w(t):=W(t\Omega_{1} \oplus (1-t) \Omega_{0}),
$$
and 
$$
a_n(t):=A(t\Omega_{1,n} \oplus (1-t) \Omega_{0,n}), \quad a(t):=A(t\Omega_{1} \oplus (1-t) \Omega_{0}).
$$ 
Let us consider the second term in the right-hand side of \eqref{estigamma}. Using the notation above we have
\begin{align*}
| y(\Omega_{t}) - y(\Omega_{n,t})| & \leq  \frac{4 \pi a(t)}{p_n^2 \overline{p}^2}|p_n^2-\overline{p}^2| + \frac{4\pi}{p_n^2}|a(t)-a_n(t)| .
\end{align*}
The factors $a(t)$ and $p_n$ are uniformly bounded (and away from 0) in $t$ and $n$. The term $|p_n^2 -\overline{p}^2|$ is infinitesimal by assumption. The difference $a(t)-a_n(t)$ is a polynomial of degree 2 in $t$ (see the properties of Minkowski mixed volumes, e.g., in \cite{Sch}); its coefficients depend on $n$ and they go to 0 as $n\to \infty$, by the Hausdorff convergence of shapes. These estimates imply that $| y(\Omega_{t}) - y(\Omega_{n,t})| $ is arbitrarily small for $n$ large enough, uniformly in $t$.

The same strategy applies to the first term in the right-hand side of \eqref{estigamma}:
\begin{align*}
& |  x(\Omega_{t}) - x(\Omega_{n,t})|  =  \frac{1}{2\pi} \left|\frac{a^2_n(t)}{w_n(t)} - \frac{a^2(t)}{w(t)}\right| = \frac{|a^2_n(t) w(t) - a^2(t) w_n(t)|}{2\pi w_n(t) w(t)} \\
& \leq \frac{a_n(t) + a(t) }{2\pi w_n(t) }|a_n(t) - a(t)| + \frac{a^2(t) }{2\pi w_n(t) w(t)}|w(t)- w_n(t)|.
\end{align*}
As before, the factors $a_n(t), a(t), w_n(t), w(t)$ are uniformly bounded (and away from 0) in $t$ and $n$, and the difference $a_n(t) - a(t)$ is arbitrarily small, uniformly in $t$. Using polar coordinates, it is immediate to check that $|w_n(t)-w(t)|$ is bounded above by the complementary Hausdorff distance 
$$
d_n(t):=d_H(t\Omega_{1,n} \oplus (1-t) \Omega_{0,n}\ ;\  t\Omega_{1} \oplus (1-t) \Omega_{0}).
$$
Since the distance $d_H$ of two shapes coincides with the $L^\infty$ norm of the difference of the associated support functions,  and since the support function is linear for a Minkowski sum, we deduce that
$$
d_n(t) = \| t h_{1,n} + (1-t) h_{0,n} - t h_{1} - (1-t) h_0\|_\infty \leq t \|h_1 - h_{1,n}\| + (1-t) \|h_0 - h_{0,n}\|_\infty,
$$
where $h_0$, $h_{0,n}$, $h_1$, and $h_{1,n}$ denote the support functions of $\Omega_0$, $\Omega_{0,n}$, $\Omega_1$, and $\Omega_{1,n}$, respectively. Since Hausdorff convergence is equivalent to uniform convergence of support functions, we conclude that $d_n(t)$, and then $|w(t)-w_n(t)|$, are arbitrarily small for $n$ large enough, uniformly in $t$.

This concludes the proof of the uniform continuity.

{\it Step 3. Loops around holes.} We claim that $\mathcal D$ coincides with the set
$$
\{(x,y)\ :\ x\in (0,1]\,,\ y\in [L^-(x), L^+(x)] \}.
$$
This fact, together with the connectedness obtained in Proposition \ref{arcs}, implies the simple connectedness of the diagram.
Assume by contradiction that this is not true. Then there exists a point $Q=(x_Q, y_Q)\in \mathbb R^2 \setminus \Diag$ such that $x_Q\in (0,1)$ and $y_Q\in (L^-(x_Q),L^+(x_Q))$. Since $Q\neq O$, we infer that $Q\in \mathbb R^2 \setminus \left( \Diag \cup \{O\}\right) = \mathbb R^2 \setminus \overline{\Diag}$, which is an open set. Thus there exists a radius $r>0$ such that $B_r(Q)\cap \Diag = \emptyset$. Given a continuous closed curve in $\mathbb R^2$ and a point not belonging to its support, it is well defined the winding number, invariant under homotopy. Let us denote by $w_\gamma(Q)$ the winding number of a closed curve $\gamma:I \to \mathbb R^2$, being $I\subset \mathbb R$ an interval.
Given $x\in (0,1)$, we introduce the following notation
$$
\mathcal A_1 (y):=\{\Omega \in \mathcal A\ :\ A(\Omega) = 1,\ y(\Omega)=y\}.
$$
Given two shapes $\Omega_0, \Omega_1\in \mathcal A_1(y)$ we can construct, by Step 2, a continuous curve in $\mathbb R^2$, with support $\Gamma_{\Omega_0, \Omega_1}$. We notice that if $y \leq y_Q - \frac{r}{2}$, this path does not cross the point $Q$: indeed, for $t\in [0,1]$ the path is contained into $\mathcal D$, whereas for $t\in [1,2]$ it is horizontal with ordinate $y<y_Q$. Therefore, the winding number of the path around $Q$ is well defined. 

Let us consider the set 
$$
J=\left\{0 < y \leq y_Q- \frac{r}{2} \ :\ \exists \Omega_0, \Omega_1 \in \mathcal A_1(y),\ w_{\Gamma_{\Omega_0, \Omega_1}}(Q) \neq 0 \right\}.
$$
It is easy to see that $J\neq \emptyset$ (the point $y_Q-r/2 $ belongs to it) and that $J$ is bounded from below by a positive constant (by \eqref{claim}). Let $\overline{y}:=\inf J$. Two possibilities may occur: either $\overline{y}\in J$ or $\overline{y}\notin J$.
Consider, e.g., the sequence $y_n=\overline{y}-1/n$. We clearly have $y_n\notin J$. Using \cite[Corollary 3.6]{FL} (that can be easily adapted to the case of double symmetric sets), we can construct a family of pairs $(\Omega_{0,n}, \Omega_{1,n})\in \mathcal A_1(y_n)$ such that $\Omega_{0,n}\to \Omega_0$ and $\Omega_{1,n} \to \Omega_1$ with respect to the complementary Hausdorff distance. Since $y_n \notin J$, we have $w_{\Gamma_{\Omega_{0,n}, \Omega_{1,n}}}(Q)= 0$. The winding number being an invariant under homotopy, we get a contradiction. 
The second case is similar, taking a sequence $y_n \to \overline{y}^+$.
This concludes the proof.
\end{proof}

Now, using recent results of Lamboley, Novruzi, Pierre, see \cite{LNP} we prove the following

\begin{theorem}\label{jimmy}
Any convex domain, different from the disk, that minimizes $y(\Omega)$ with $x(\Omega)=x_0$ fixed is a polygon.\\
Any convex domain, different from the disk, that maximizes $y(\Omega)$ with $x(\Omega)=x_1$ fixed is $C^{1,1}$.
\end{theorem}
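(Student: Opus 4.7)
Plan: The two claims reduce to the regularity/singularity dichotomy for optimal convex shapes proved by Lamboley, Novruzi, and Pierre \cite{LNP}: for minimization problems of the form $F(\Omega) + \kappa P(\Omega)$ over convex sets under pointwise scalar constraints, provided the bulk functional $F$ is sufficiently smooth in the support function of the shape, the optimal set is polygonal when $\kappa<0$ and of class $C^{1,1}$ when $\kappa>0$. The strategy is to rewrite each of our two problems in this form and then identify the sign of the perimeter coefficient.

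The reformulation uses the scale invariance of $\x$ and $\y$: after normalizing $A(\Omega)=1$, the level constraint $x(\Omega)=x_0$ reads $W(\Omega)=1/(2\pi x_0)=:W_0$, and since $y(\Omega)=4\pi/P^2(\Omega)$, minimizing (resp.\ maximizing) $y$ inside the slice $\{x=x_0\}$ amounts to maximizing (resp.\ minimizing) $P$ subject to $A=1$ and $W=W_0$. Existence of an optimizer $\Omega^*$ is given by Proposition \ref{existence}, and the Lagrange multiplier rule then yields $\lambda,\mu\in\mathbb R$ such that $\Omega^*$ is a local minimizer in $\mathcal A$, subject to two scalar constraints, of
\[
J_\varepsilon(\Omega):=\lambda A(\Omega)+\mu W(\Omega)+\varepsilon P(\Omega),
\]
with $\varepsilon=-1$ in the case where $\Omega^*$ minimizes $y$ and $\varepsilon=+1$ in the case where $\Omega^*$ maximizes $y$.

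It remains to check that $F:=\lambda A+\mu W$ is a suitable bulk functional in the sense of \cite{LNP}. Parametrizing shapes by their support function $h$, one has $A(h)=\tfrac12\int_0^{2\pi} h(h+h'')\,d\theta$ and, via the divergence identity $W(\Omega)=\tfrac14\int_{\partial\Omega}|x|^2\,x\cdot\nu\,d\sigma$, also $W(h)=\tfrac14\int_0^{2\pi}(h^2+h'^2)\,h(h+h'')\,d\theta$. Both are polynomial expressions in $h$ of degree at most four, whose first and second Fr\'echet derivatives define continuous (bi)linear forms in the Sobolev topologies used in \cite{LNP}. The double symmetry enforced by $\mathcal A$ poses no obstruction: doubly symmetric perturbations of the support function form a closed subspace of the admissible perturbations, and the localized variations employed in \cite{LNP} can be realized symmetrically by combining four mirrored arcs.

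With these verifications in place, the dichotomy of \cite{LNP} applied to $J_\varepsilon=F+\varepsilon P$ produces a polygon when $\varepsilon=-1$ and a $C^{1,1}$ set when $\varepsilon=+1$, which is exactly the statement of the theorem. The main obstacle is the routine but somewhat lengthy check that the moment of inertia $W$, as a quartic functional in $h$, meets the precise Fr\'echet regularity hypotheses of \cite{LNP}; this reduces to careful integrations by parts on the circle and to tracking of the coupling between the perturbation and the base support function, without which the constants produced by $W$ in the second variation could a priori interfere with the sign condition driving the dichotomy.
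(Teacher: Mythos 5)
Your overall strategy matches the paper's: normalize $A(\Omega)=1$, turn the level constraint $x(\Omega)=x_0$ into $W(\Omega)=1/(2\pi x_0)$, observe that minimizing (resp.\ maximizing) $\y$ means maximizing (resp.\ minimizing) $P$ under the two scalar constraints, and then invoke the regularity/singularity results of \cite{LNP}. Your support-function formulas for $A$ and $W$ are correct (the paper works with the gauge function instead, which is immaterial). However, there are two genuine gaps in the way you pass from the constrained problem to \cite{LNP}.

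First, the Lagrange multiplier reduction is not sound as stated. The multiplier rule only gives that $\Omega^*$ is a \emph{critical point} of $J_\varepsilon=\lambda A+\mu W+\varepsilon P$; it does not make $\Omega^*$ a \emph{local minimizer} of $J_\varepsilon$. Second-order optimality for a constrained problem controls the Hessian of the Lagrangian only on the tangent space to the constraint manifold, not on all admissible perturbations. Since the dichotomy in \cite{LNP} (polygonal for the ``concave'' case, $C^{1,1}$ for the ``convex'' case) is driven precisely by second-order comparisons with nearby competitors, you cannot feed a mere critical point of $J_\varepsilon$ into those unconstrained statements. The paper avoids this by applying the \emph{constrained} versions directly (Theorem 4 of \cite{LNP} for the polygonal assertion, Theorem 2 for the $C^{1,1}$ one), whose hypotheses are exactly: (i) surjectivity of $m'(u_0)$ onto $\mathbb R^2$, where $m$ encodes the two constraints, and (ii) domination of $\langle m''(u_0)v,v\rangle$ by a norm strictly weaker than $H^1$ (here the $L^2$ norm of $v$), so that the constraint's second variation cannot compete with the $H^1$-coercive second variation of $\mp P$. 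You acknowledge the second point as a ``routine check'' but the first point is entirely absent, and it is not cosmetic: it is the constraint qualification that makes any multiplier (or tangent-space) argument normal rather than degenerate.

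Second, and relatedly, you never use the hypothesis that $\Omega^*$ is not a disk, yet it appears in the statement for a reason. In the paper it is exactly what guarantees surjectivity of $m'(u_0)$: the two linear forms $v\mapsto\int_0^{2\pi} v/u_0^3$ and $v\mapsto\int_0^{2\pi} v/u_0^5$ are independent precisely because $u_0$ is non-constant. For the disk these forms are proportional, the constraint derivative is not onto, and the whole argument collapses. Any correct write-up must make this dependence explicit. If you repair the argument by replacing the Lagrange multiplier step with a direct application of the constrained theorems of \cite{LNP} and verify (i)--(ii) above, the rest of your outline (including the sign bookkeeping $\varepsilon=\mp1$ and the symmetrization of localized perturbations) goes through.
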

\begin{proof}
{\it First assertion:}
Minimizing $y(\Omega)$ with $x(\Omega)=x_0$ is equivalent, according to the scale invariance of the functionals $x$ and $y$
to 
$$\min\left\{- P(\Omega)\ \big|\  \Omega \mbox{ convex },  A(\Omega)=1, W(\Omega)=\frac{1}{2\pi x_0}=\colon w_0\right\}.$$
Now we want to use Theorem 4 in \cite{LNP} with the constraint $m(\Omega)=(0,0)$ where, using the  parametrization with
the gauge function $u$:
$$m(\Omega)=m(u)=\left(\frac{1}{2} \int_0^{2\pi} \frac{1}{u^2(\theta)} d\theta -1,  \frac{1}{4} \int_0^{2\pi} \frac{1}{u^4(\theta)} d\theta -w_0\right)$$
and the functional to minimize is
$$-P(\Omega)=-P(u)=-\int_0^{2\pi} \frac{\sqrt{u^2+{u^\prime}^2}}{u^2(\theta)} d\theta .$$
In order to apply this Theorem 4, we just need to prove ($u_0$ denotes the minimizer)
\begin{itemize}
\item that $m^\prime(u_0)$ in onto on $\mathbb{R}^2$
\item that $<m^{\prime\prime}(u_0) v,v>$ is dominated by a norm of $v$ weaker than the $H^1$-norm.
\end{itemize}
The first point comes from the fact that
$$<m^\prime(u_0),v>=\left(-\int_0^{2\pi} \frac{v}{u_0^3(\theta)} d\theta,  \;-\int_0^{2\pi} \frac{v}{u_0^5(\theta)} d\theta\right)$$
and the two linear forms are independent since $u_0$ is not constant.\\
The second point  comes from the fact that
$$<m^{\prime\prime}(u_0),v,v>=\left(3\int_0^{2\pi} \frac{v^2}{u_0^4(\theta)} d\theta,  \;5\int_0^{2\pi} \frac{v^2}{u_0^6(\theta)} d\theta\right)$$
and we can estimate both terms by the $L^2$-norm of $v$.

\medskip\noindent
{\it Second assertion:} Now we use, exactly in the same spirit, Theorem 2 in \cite{LNP}, see also Theorem 1.1 in \cite{LP}.
Now, we minimize $P(u)$ with the same constraint, and the "convexity" of the functional $u\mapsto P(u)$ provides
the good assumptions.
\end{proof}

From the previous result we immediately have the following
\begin{corollary}
The two functions $L^+$ and $L^-$ only coincide for $x=1$. 
\end{corollary}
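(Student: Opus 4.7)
The plan is to argue by contradiction, exploiting the incompatible regularity of the optimal shapes described in Theorem \ref{jimmy}.

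First, the easy direction: at $x=1$ Proposition \ref{bounds} forces $\Omega=\mathbb D$, which also gives $y(\Omega)=1$, so $L^+(1)=L^-(1)=1$. Hence only the range $x\in(0,1)$ requires work.

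Next, suppose toward a contradiction that there exists $x_0\in(0,1)$ with $L^+(x_0)=L^-(x_0)=:y_0$. By the simple connectedness result (or simply by the definitions of $L^\pm$), every admissible shape $\Omega$ satisfying $x(\Omega)=x_0$ must then verify
\[
L^-(x_0)\le y(\Omega)\le L^+(x_0),
\]
so that $y(\Omega)=y_0$. Consequently, any such $\Omega$ is simultaneously a minimizer and a maximizer of $y$ on the fiber $\{x(\cdot)=x_0\}$. By Proposition \ref{existence} at least one such $\Omega_0$ exists, and since $x_0<1$, Proposition \ref{bounds} guarantees $\Omega_0\neq \mathbb D$.

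Now I apply Theorem \ref{jimmy} to this single shape $\Omega_0$. As a minimizer different from the disk, its boundary must be polygonal. As a maximizer different from the disk, its boundary must be of class $C^{1,1}$. These two conclusions are manifestly incompatible: a convex polygon has at least three vertices, at which the outer unit normal is discontinuous, so $\partial \Omega_0$ cannot even be $C^1$, let alone $C^{1,1}$. This contradiction proves $L^+(x_0)\neq L^-(x_0)$ for every $x_0\in(0,1)$, completing the proof.

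The only potential subtlety is verifying that the two regularity statements of Theorem \ref{jimmy} are really mutually exclusive for a non-disk convex shape; but this is elementary from convex geometry, since a polygon is determined by finitely many half-plane constraints and its boundary necessarily contains corner points where the normal jumps. Hence no genuine obstacle arises, and the corollary follows at once from the shape-regularity dichotomy.
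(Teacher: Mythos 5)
Your proof is correct and follows exactly the route the paper intends: the corollary is stated as an immediate consequence of Theorem \ref{jimmy}, since a shape realizing both $L^+(x_0)$ and $L^-(x_0)$ for $x_0\in(0,1)$ would have to be simultaneously a polygon and of class $C^{1,1}$ without being the disk, which is impossible. Your handling of the endpoint $x=1$ via Proposition \ref{bounds} and of the existence of an optimizer via Proposition \ref{existence} matches the paper's logic, so nothing further is needed.
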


\section{The lower boundary of the diagram}\label{sec-L-}
In this section we investigate the properties of the lower boundary of the diagram, described by the graph of the function $L^-$ (see \eqref{defLmoins}). We recall that $L^-(x)$ is defined as the minimum of the shape functional $\y$ when the shape functional $\x$ is fixed (equal to $x$). 

As we have already proved in Theorem \ref{jimmy}, the optimal shapes associated to $L^-(x)$ are polygons.

To prove the regularity and monotonicity of $L^-$, we need some preparatory results.

\begin{lemma}\label{poly1}  Let $\Omega$ be a polygon in $\mathcal A$. Then there exists $\e_0>0$ and a map
$$
[0,\e_0] \ni \e\mapsto \Omega_\e \in \mathcal A
$$
continuous with respect to the complementary Hausdorff distance, such that $\Omega_0=\Omega$ and such that $\e\mapsto x(\Omega_\e)$ is strictly increasing.
\end{lemma}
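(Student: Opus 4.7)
The plan is to construct $\Omega_\e$ by symmetrically truncating the vertices of $\Omega$ lying farthest from the origin. The intuition is that the functional $A^2/(2\pi W)$ attains its maximum on $\mathcal A$ only at the disk (Proposition \ref{bounds}), so removing the ``most eccentric'' boundary points should push $x$ upward. Let $R:=\max\{|z|\,:\,z\in\overline{\Omega}\}$. By convexity, $R$ is attained at some vertices of $\Omega$, and by the double symmetry the maximal set $\{v_1,\dots,v_m\}$ is invariant under both axis reflections. At each $v_i$ I remove the small triangle $T_i(\e)$ bounded by the two edges adjacent to $v_i$ and by the chord joining the points at distance $\e$ from $v_i$ along those edges, and set $\Omega_\e:=\Omega\setminus\bigcup_i T_i(\e)$. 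For $\e$ smaller than the length of each such edge (and small enough that the $T_i(\e)$ remain pairwise disjoint), $\Omega_\e$ is convex, preserves both axes of symmetry, depends continuously (in fact Lipschitz) on $\e$ in the Hausdorff distance, and satisfies $\Omega_0=\Omega$.

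A direct computation, using that $|z|^2=R^2+O(\e)$ on each triangle $T_i(\e)$, which has area $\tfrac12\e^2\sin\alpha_i$ (where $\alpha_i$ is the interior angle at $v_i$), yields
\[
A(\Omega_\e)=A(\Omega)-S\e^2,\qquad W(\Omega_\e)=W(\Omega)-R^2 S\e^2+O(\e^3),
\]
with $S:=\tfrac12\sum_i\sin\alpha_i>0$. Expanding the ratio defining $x$,
\[
x(\Omega_\e)-x(\Omega)=\frac{A(\Omega)\,S\,\bigl(R^2A(\Omega)-2W(\Omega)\bigr)}{2\pi\,W(\Omega)^2}\,\e^2+O(\e^3).
\]
The crucial ingredient is the strict inequality $R^2A(\Omega)>2W(\Omega)$: in polar coordinates centred at the origin (as in the proof of Proposition \ref{bounds}),
\[
W(\Omega)=\tfrac14\int_0^{2\pi}\rho_{\max}^4(\theta)\,d\theta\le\tfrac{R^2}{4}\int_0^{2\pi}\rho_{\max}^2(\theta)\,d\theta=\tfrac{R^2}{2}A(\Omega),
\]
with equality iff $\rho_{\max}\equiv R$, i.e.\ $\Omega$ is a disk; since $\Omega$ is a polygon, the inequality is strict.

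Consequently $\phi(\e):=x(\Omega_\e)$ is smooth and of the form $\phi(0)+c\e^2+O(\e^3)$ with $c>0$, so $\phi'(\e)=2c\e+O(\e^2)$ is strictly positive on $(0,\e_0]$ for $\e_0$ small enough, giving strict monotonicity on all of $[0,\e_0]$. The main obstacle is precisely this last step: corner truncation alters the area only at order $\e^2$, so the first-order shape derivative $\tfrac{d}{d\e}x(\Omega_\e)|_{\e=0}$ vanishes and strict monotonicity must be extracted from the sign of the second-order term. One cannot in general bypass this by choosing a first-order perturbation: for instance the natural area-preserving squeeze $(x,y)\mapsto((1+\e)x,(1+\e)^{-1}y)$ has $\tfrac{d}{d\e}x(\Omega_\e)|_{\e=0}$ proportional to $\int_\Omega(y^2-x^2)\,dx\,dy$, which vanishes on any $4$-fold symmetric polygon such as the square. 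The truncation construction, together with the sharp polar-coordinate estimate $R^2 A>2W$, is what makes the argument work uniformly across $\mathcal A$.
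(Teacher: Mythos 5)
Your proof is correct and follows essentially the same route as the paper: truncate the vertices farthest from the origin, observe that the resulting change in $x$ is governed at leading order by the sign of $R^2A(\Omega)-2W(\Omega)$, and establish the strict inequality $2W(\Omega)<R^2A(\Omega)$ for non-disks via the polar-coordinate bound $\fint_\Omega(x^2+y^2)\le\frac12\max_\theta\rho_{\max}^2(\theta)$. The only (harmless) differences are cosmetic — you cut all maximal vertices rather than a single symmetric orbit of four, and you state explicitly that the effect is second order in $\e$, which the paper glosses over.
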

\begin{proof}
 Let $V_1$ be one of the vertexes of $\Omega$ in the first quadrant with boundary angle $\alpha \in ]0,\pi[$, and let $V_2, V_3, V_4$ be the corresponding (symmetric) points in the other three quadrants. For $\e>0$ small, let $T_\e$ denote the union of the four isosceles triangles, each of them with vertex at $V_i$, two legs lying on $\partial \Omega$, and base of length $\e$. 

We define the set $\Omega_\e$ as the set obtained by removing from $\Omega$ the four triangles, namely $\Omega_\e :=\Omega \setminus T_{\e}$. This operation clearly makes $A$ and $W$ decrease:
$$
A(\Omega_\e)= A(\Omega) \left(1- \frac{\int_{T_\e} 1}{\int_\Omega 1}\right), \quad W(\Omega_\e) = W(\Omega) \left(1- \frac{\int_{T_\e} (x^2 + y^2)}{\int_\Omega (x^2 + y^2) }\right).
$$
Therefore
\begin{align*}
x(\Omega_\e)& =x(\Omega) \left(1-2 \frac{\int_{T_\e} 1}{\int_\Omega 1} + o(\e^2) \right )\left (1+\frac{\int_{T_\e} (x^2 + y^2)}{\int_\Omega (x^2 + y^2) }+ o (\e^2) \right) 
\\ & = x(\Omega) \left(1 -2 \frac{\int_{T_\e} 1}{\int_\Omega 1}+\frac{\int_{T_\e} (x^2 + y^2)}{\int_\Omega (x^2 + y^2) }+ o (\e^2) \right).
\end{align*}
The integrals over $T_\e$ are of order $\e$ and the sign of their sum, in the limit as $\e \to 0$, gives the sign of the derivative of $\e \mapsto x(\Omega_\e)$ at $\e=0$. Therefore, if we are able to show that for $\e$ small enough there holds
$$
\fint_{\Omega} (x^2 + y^2) < \frac12 \fint_{T_\e} (x^2 + y^2),
$$
we are done. Here the symbol $\fint$ stands for the average.
Let us now make a choice on the $V_i$s: we cut four triangles near the boundary points $V_i$ satisfying the maximal distance from the origin, namely
$$
\|V_1\|^2= \|V_2\|^2 =\|V_3\|^2=\|V_4\|^2= \max_{S\in\Omega} \|S\|^2 = \max_{(x,y)\in \Omega} (x^2+y^2).
$$
Passing to the limit as $\e\to 0$ we infer that each triangle in $T_\e$ shrinks to the its vertex $V_i$ and, by the Lebesgue theorem, 
$$
\fint_{T_\e} (x^2 + y^2) \to \|V_1\|^2. 
$$
Therefore it is enough to prove that 
$$
\fint_{\Omega} (x^2 + y^2) < \frac{\|V_1\|^2}{2}.
$$
Let us use a parametrization of $\Omega$ in polar coordinates: $\theta\in [0,2\pi]$ and $\rho \in [0, \rho(\theta)]$, so that
\begin{align*}
\fint_{\Omega} (x^2 + y^2) & =\frac{ \int_0^{2\pi}\int_0^{\rho(\theta)}\rho^3(\theta)\, \mathrm{d}\rho \mathrm{d}\theta}{ \int_0^{2\pi}\int_0^{\rho(\theta)}\rho(\theta)\,\mathrm{d}\rho \mathrm{d}\theta}
\\ & = \frac12 \frac{ \int_0^{2\pi}\rho^4(\theta)\, \mathrm{d}\theta}{ \int_0^{2\pi}\rho^2(\theta)\, \mathrm{d}\theta} \leq \frac12 \max_{\theta} \|\rho(\theta)\|^2 = \frac{\|V_1\|^2}{2}.
\end{align*}
The equality holds true only if $\rho(\theta)=\|V_1\|$ for every $\theta$, namely when $\Omega$ is the disk. In particular the inequality is strict in the case under study. This concludes the proof.
\end{proof}

\begin{lemma}\label{poly2} Let $\Omega$ be a polygon in $\mathcal A$. Then there exists $\e_0>0$ and a map
$$
[0,\e_0] \ni \e\mapsto \Omega_\e \in \mathcal A
$$
continuous with respect to the complementary Hausdorff distance, such that $\Omega_0=\Omega$ and such that $\e\mapsto x(\Omega_\e)$ is strictly decreasing.
\end{lemma}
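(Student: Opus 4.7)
The plan is to use an area-preserving anisotropic linear rescaling, in a sense dual to the triangle-cutting construction of Lemma \ref{poly1}: rather than contracting $\Omega$ near its farthest vertex, I push mass outward along one coordinate direction while contracting in the other. Up to exchanging the two coordinate axes (allowed since the two symmetry axes of elements of $\mathcal A$ are interchangeable), we may assume $\int_\Omega x^2 \,\de x\de y \geq \int_\Omega y^2 \,\de x\de y$. I then set, for small $\e \geq 0$,
$$
\Omega_\e := T_\e(\Omega), \qquad T_\e(x,y) := \bigl((1+\e)x,\,(1+\e)^{-1}y\bigr).
$$
Since $T_\e$ has determinant one, $A(\Omega_\e)=A(\Omega)$; and since it is linear and diagonal in the coordinate axes, $\Omega_\e$ is convex, open, and shares with $\Omega$ its two orthogonal axes of symmetry. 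Hence $\Omega_\e \in \mathcal A$ and $\e \mapsto \Omega_\e$ is continuous in complementary Hausdorff distance with $\Omega_0=\Omega$.

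A direct change of variables gives $W(\Omega_\e) = g(\e)$, where, setting $\alpha := \int_\Omega x^2\,\de x\de y$ and $\beta := \int_\Omega y^2\,\de x\de y$,
$$
g(\e) := (1+\e)^2\alpha + (1+\e)^{-2}\beta.
$$
The function $g$ is strictly convex on $(-1,\infty)$, with unique minimizer $\e^\star = (\beta/\alpha)^{1/4} - 1$. Our choice $\alpha \geq \beta$ yields $\e^\star \leq 0$, so that $g$ is strictly increasing on $[0,\infty)$: if $\alpha > \beta$ this is immediate from $g'(0) = 2(\alpha-\beta) > 0$; if $\alpha=\beta$, then $g'(0)=0$ but $g''(0) = 8\alpha > 0$, so $\e=0$ is a strict local minimum of $g$ and $g$ is still strictly increasing on some interval $[0,\e_0]$.

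Since $A(\Omega_\e)$ is constant and $W(\Omega_\e)=g(\e)$ is strictly increasing on $[0,\e_0]$, the quantity $x(\Omega_\e) = A^2(\Omega)/(2\pi W(\Omega_\e))$ is strictly decreasing there, which is precisely the claim. The only delicate point to treat carefully is the equality case $\alpha=\beta$ (arising exactly when $\Omega$ has an extra $\pi/2$-rotational symmetry, e.g.\ the square or a regular $4k$-gon): there the first-order variation of $W$ vanishes, so one must appeal to the strict convexity of $g$ rather than to the sign of $g'(0)$. Finally, I would remark that the argument does not actually use the polygonal hypothesis and produces the desired deformation for any $\Omega \in \mathcal A$.
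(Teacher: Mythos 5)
Your construction is correct, and it is genuinely different from the paper's. The paper proves this lemma by a four-case analysis on the number $k$ of vertices in the first quadrant: for $k\leq 2$ it parametrizes the resulting rectangles, rhombi, hexagons and octagons explicitly and checks that $x(\cdot)$ has no critical point in the relevant parameter region, while for $k\geq 3$ it performs a parallel chord movement of a vertex $Q_2$ along a line parallel to $Q_1Q_3$, which fixes the area and makes $W$ of the triangle $Q_1Q_2Q_3$ increase in at least one direction (the restriction of $W$ to that line being a convex quadratic in the sliding parameter). Your single area-preserving diagonal squeeze $T_\e(x,y)=((1+\e)x,(1+\e)^{-1}y)$ replaces all of this: it manifestly preserves area, convexity, openness and the double symmetry, fixes the centroid at the origin, and gives the exact formula $W(\Omega_\e)=(1+\e)^2\alpha+(1+\e)^{-2}\beta$, which under the normalization $\alpha\geq\beta$ is strictly increasing on $[0,\infty)$ (your treatment of the degenerate case $\alpha=\beta$ via strict convexity of $g$ is the right and necessary care). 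Your argument is shorter, uniform in the shape, and — as you note — does not use polygonality at all, so it proves more than the stated lemma; its one limitation is that it is specific to the \emph{decreasing} direction (when $\alpha=\beta$ the squeeze can only increase $W$, so it cannot replace the corner-cutting construction of Lemma \ref{poly1}), whereas the paper's local polygonal perturbations fit the style of machinery it reuses elsewhere (e.g.\ in Lemma \ref{pcm}). Both yield the continuity in the complementary Hausdorff distance needed for Corollary \ref{deformationx}.
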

\begin{proof} The shape $\Omega_\e$ will be constructed as a small (continuous) deformation of $\Omega$. The deformation will be explicitly described in the first quadrant and repeated in the other three quadrants, in order to preserve the double symmetry.

Let $\Omega$ be a given polygon in $\mathcal A$. Let $k$ be the number of boundary vertexes falling in the first quadrant $\{x\geq 0\,,\ y\geq 0\}$. Let $\{Q_1, \ldots, Q_k\}$ be such points, ordered in counter-clockwise sense.

{\it Case 1: $k=1$ or $k=2$ and $Q_1,Q_2$ on the axis.} This two cases correspond to rectangles and rhombi. They can be described, respectively, as the 1-parameter families $\{R_\ell\}_{\ell \in ]0,1]}$ and $\{S_\ell\}_{\ell \in ]0,1]}$ defined as follows: $R_\ell$ is the rectangle associated to $k=1$ with $Q_1=(1,\ell)$; $S_\ell$ is the rhombus associated to $k=2$ with $Q_1=(1,0), Q_2=(0,\ell)$. The functions
$$
]0,1] \ni \ell\mapsto R_\ell \in \mathcal A, \quad ]0,1] \ni \ell\mapsto S_\ell \in \mathcal A
$$
are continuous with respect to the complementary Hausdorff distance, moreover (see also the proof of Proposition \ref{closure}) the maps
$$
]0,1] \ni \ell\mapsto x(R_\ell)=x(S_\ell) = \frac{6 \ell}{\pi(1+\ell^2)}
$$
are strictly increasing. This means that we can always increase/decrease $\ell$ to make $\x$ decrease in a continuous way. 

{\it Case 2: $k=2$ and either $Q_1$ or $Q_2$ on the coordinate axis.} Without loss of generality, we can assume that $Q_1$ is on the horizontal axis $y=0$, say $Q_1=(1,0)$, whereas $Q_2=(s,t)$ for some $s\in ]0,1]$ and $t>0$. In this case $\Omega$ is a hexagon with boundary points $Q_1$, $-Q_1$, $Q_2$ and the three reflections of $Q_2$ in the other three quadrants. Let us denote by $H_{s,t}$ such hexagon. As before, the map
$$
]0,1[\times ]0, +\infty[\ni (s,t)\mapsto H_{s,t}\in \mathcal A
$$
is continuous with respect to the complementary Hausdorff distance. A direct computation leads to
$$
 x(H_{s,t})=\frac{24}{\pi}\frac{t(s + 1)^2}{t^2(3s+1) + s^3+s^2+s+1}. 
$$
By computing the gradient of this function in the variables $s$ and $t$, it is immediate to check that there is no critical point in the stripe $(s,t)\in ]0,1]\times ]0, +\infty[$. As in the previous case, this implies that we can always find a direction which makes the directional derivative of $\x$ negative.

{\it Case 3: $k=2$ and neither $Q_1$ nor $Q_2$ are on the coordinate axis.} In this case $\Omega$ is an octagon. This class in $\mathcal A$ can be described as the three-parameters family $O_{s,t,u}$ associated to the two points in the first quadrant $Q_1=(1,s)$, $Q_2=(t,u)$, where $t<1$ and $u>s$. We have:
$$
x(O_{s,t,u})= \frac{24}{\pi} \frac{N(s,t,u)}{D(s,t,u)},
$$
with
\begin{align*}
N(s,t,u) = &\left[tu + (1-t)(u+s)/2\right]^2,
\\
D(s,t,u)= & t u (t^2 + 4 u^2  + t + 2) 
\\ & + (1-t)(s^3 + s^2 u + s t^2 + 2 s t + s u^2 + 3 s  + u^3 + u).
\end{align*}
Once again this function has no critical point in the region of $\mathbb R^3$ where $s,t,u$ live.

{\it Case 4: $k\geq 3$.} Here we proceed in a different way with respect to the previous cases: we will perform a parallel chord movement, consisting in making $Q_2$ slide on a line parallel to $Q_1Q_3$ passing through $Q_2$. To preserve symmetry, we do the same thing in the other three quadrants. Thanks to the standing assumptions, since $Q_2$ is not aligned to $Q_1$ vertically nor to $Q_3$ horizontally, we can perform such sliding in two directions. The resulting shape is still convex, double symmetric, and has the same area (since we keep the point $Q_2$ at the same distance from the line passing through $Q_1Q_3$).
Since this deformation does not affect the area, it is enough to study the behavior of $W$: to get the thesis we need to make $W$ increase. More precisely, since we only perturb the shape in the triangle $Q_1Q_2Q_3$, it is enough to study the behavior of $W(Q_1Q_2Q_3)$. Once clarified that we do four identical deformations in the four quadrants, we focus ourselves to what happens in one of the triangles. Since $W$ is invariant under rotation, we may assume that the segment $Q_1Q_3$ is vertical. Without loss of generality we may assume that $Q_2$ is at distance 1 from the vertical line through $Q_1Q_3$. Thus $Q_2$ is of the form $Q_2=Q_1+(1,t)$ for some positive $t$. Let $c>0$ be the distance of $Q_3$ from $Q_1$, namely $Q_3=Q_1+(0,c)$. The admissible values of $t$ depend on the sides of $\Omega$ adjacent to $Q_1Q_2$ and $Q_2Q_3$, however, given an admissible $t$, there exists a neighborhood of $t$ for which the associated shape is still admissible. Let $Q_1=(a,b)$. A direct computation shows that
\begin{align*}
W(Q_1Q_2Q_3) & = \int_0^1 \int_{tx}^{(t-c)x + c} [(x+a)^2 + (y+b)^2]\, \mathrm{d}y \, \mathrm{d}x
\\
 = &  \varphi(a,b,c) + \frac{c(c+4b)}{12} t + \frac{c}{12} t^2,
\end{align*}
for some (explicit) function $\varphi$ not depending on $t$. As a function of $t$, there is only one critical point, which is a local minimizer. This implies that for every $t$, we can always increase or decrease $t$ in such a way to make $W$ increase.
This concludes the proof.

\end{proof}

\begin{corollary}\label{deformationx}
Let $x_0\in ]0,1[$ and $\Omega_0\in \mathcal A$ with $x(\Omega_0)=x_0$ and $y(\Omega_0)=L^-(x_0)$. 
Let $\{x_n\}_{n\in \mathbb N} \subset ]0,1[$ be a sequence such that $x_n \to x_0$ as $n\to \infty$. Then there exists a sequence of shapes $\{\Omega_n\}_{n\in \mathbb N}\subset \mathcal A$ such that for $n$ sufficiently large $x(\Omega_n)=x_n$ and in the limit as $n\to \infty$ there holds $\Omega_n \to \Omega_0$ with respect to the complementary Hausdorff distance.
\end{corollary}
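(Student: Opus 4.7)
The plan is to combine Theorem \ref{jimmy} with the two deformation lemmas just established. Since $x_0\in(0,1)$, the minimizer $\Omega_0$ is not the disk, hence by Theorem \ref{jimmy} it is a polygon. Therefore both Lemma \ref{poly1} and Lemma \ref{poly2} apply to $\Omega_0$: we obtain two continuous one-parameter families
$$
[0,\eps_0^+]\ni \eps \mapsto \Omega_\eps^+\in \mathcal A,\qquad [0,\eps_0^-]\ni \eps \mapsto \Omega_\eps^-\in \mathcal A,
$$
both starting from $\Omega_0$ at $\eps=0$, and such that $\eps\mapsto x(\Omega_\eps^+)$ is strictly increasing while $\eps\mapsto x(\Omega_\eps^-)$ is strictly decreasing. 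In particular, by Proposition \ref{continuity} and the intermediate value theorem, the images of these two maps cover a neighborhood $(x_0-\delta, x_0+\delta)$ of $x_0$ in the $x$-coordinate.

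For $n$ large enough, $x_n\in(x_0-\delta,x_0+\delta)$, so I can choose $\eps_n\geq 0$ as follows: if $x_n>x_0$, let $\eps_n$ be the (unique) parameter in $[0,\eps_0^+]$ with $x(\Omega_{\eps_n}^+)=x_n$ and set $\Omega_n:=\Omega_{\eps_n}^+$; if $x_n<x_0$, take the analogous $\eps_n\in[0,\eps_0^-]$ with $x(\Omega_{\eps_n}^-)=x_n$ and set $\Omega_n:=\Omega_{\eps_n}^-$; if $x_n=x_0$, simply set $\Omega_n:=\Omega_0$. In all cases $\Omega_n\in\mathcal A$ and $x(\Omega_n)=x_n$ by construction.

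It remains to check that $\Omega_n\to \Omega_0$ with respect to the complementary Hausdorff distance. Since $x_n\to x_0$, the strict monotonicity of $\eps\mapsto x(\Omega_\eps^\pm)$ together with the continuity of these maps forces $\eps_n\to 0$. The Hausdorff continuity of each of the two deformations then gives $\Omega_n\to \Omega_0$, as desired.

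The only delicate point is the existence of a common $\delta>0$ such that every $x$ in $(x_0-\delta,x_0+\delta)$ is attained by one of the two families; this is immediate from the strict monotonicity and continuity at $\eps=0$, so no real obstacle arises. The work has essentially been done in Theorem \ref{jimmy} and in Lemmas \ref{poly1}--\ref{poly2}, and the corollary is their direct consequence.
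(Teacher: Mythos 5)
Your proof is correct and follows essentially the same route as the paper's: invoke Theorem \ref{jimmy} to get that $\Omega_0$ is a polygon, then apply Lemma \ref{poly1} for $x_n>x_0$ and Lemma \ref{poly2} for $x_n<x_0$, using continuity and strict monotonicity of $\e\mapsto x(\Omega_\e)$ to select $\e_n\to 0$ with $x(\Omega_{\e_n})=x_n$. Your explicit remark that $\Omega_0$ is not the disk because $x_0<1$ is a small but welcome addition of care over the paper's version.
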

\begin{proof}
In view of the assumption $L^-(x_0)=y(\Omega_0)$ and Theorem \ref{jimmy} we infer that $\Omega_0$ is a polygon. We will distinguish the two cases of $x_n\to x_0^+$ and $x_n\to x_0^-$. 
Let us detail the case $x_n \to x_0^+$. In view of Lemma \ref{poly1}, there exists some $\e_0>0$ and a continuous map $[0,\e_0] \ni \e\mapsto \Omega_\e \in \mathcal A$ such that $\e\mapsto x(\Omega_\e)$ is continuous and increasing. The image of $\e\mapsto x(\Omega_\e)$ is the whole interval $[x_0, x(\Omega_{\e_0})]$. For every $n$ large enough, $x_n\in [x_0, x(\Omega_{\e_0})]$ and there exists $\e_n \in [0,\e_0]$ such that $x(\e_n)=x_n$. As $x_n \to x_0$ we have $\e_n \to 0$. The thesis then follows by taking $\Omega_n:=\Omega_{\e_n}$. The same strategy applies to the case $x_n \to x_0^-$ using Lemma \ref{poly2}.
\end{proof}

\begin{lemma}\label{pcm}
Let $\Omega \in \mathcal A$ be a polygon. Then it is not a local minimizer of $\y$.
\end{lemma}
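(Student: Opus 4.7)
The plan is to perturb $\Omega$ by an area-preserving linear map along the two symmetry axes and to show that the perimeter strictly increases, so that $y(\Omega)=4\pi A(\Omega)/P^2(\Omega)$ strictly decreases. Concretely, assuming (as in the notation section) that the axes of symmetry of $\Omega$ are the coordinate axes, I would introduce
$$
T_\lambda(x,y):=(\lambda x,\,y/\lambda),\qquad \Omega_\lambda:=T_\lambda(\Omega),\qquad \lambda>0.
$$
The map $T_\lambda$ is linear, area-preserving, preserves convexity, and commutes with both reflections across the coordinate axes; therefore $\Omega_\lambda\in\mathcal A$, $A(\Omega_\lambda)=A(\Omega)$, and $\lambda\mapsto\Omega_\lambda$ is continuous with respect to the complementary Hausdorff distance with $\Omega_1=\Omega$. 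Since $A$ is conserved, showing $y(\Omega_\lambda)<y(\Omega)$ amounts to showing $P(\Omega_\lambda)>P(\Omega)$.

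Let $(x_i,y_i)$, $i=1,\dots,N$, be the edge vectors of the polygon $\Omega$ and $L_i=\sqrt{x_i^2+y_i^2}$ their lengths. The edges of $\Omega_\lambda$ are $(\lambda x_i,y_i/\lambda)$, hence
$$
P(\lambda):=P(\Omega_\lambda)=\sum_{i=1}^N\sqrt{\lambda^2 x_i^2+y_i^2/\lambda^2}.
$$
A direct differentiation gives
$$
P'(1)=\sum_{i=1}^N\frac{x_i^2-y_i^2}{L_i},\qquad P''(1)=\sum_{i=1}^N\frac{2\,y_i^2(3x_i^2+y_i^2)}{L_i^3}.
$$
I would then split into two cases. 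If $P'(1)\ne 0$, the perimeter is strictly monotone near $\lambda=1$, so $P(\Omega_\lambda)>P(\Omega)$ for $\lambda$ slightly on one of the two sides, and the conclusion follows immediately. If $P'(1)=0$, each summand in $P''(1)$ is nonnegative and vanishes exactly when $y_i=0$; since a nondegenerate bounded convex polygon cannot consist only of horizontal edges, at least one $y_i$ is nonzero and hence $P''(1)>0$. In this case $P$ has a strict local minimum at $\lambda=1$, so $P(\Omega_\lambda)>P(\Omega)$ for every $\lambda\ne 1$ close to $1$.

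In either situation, arbitrarily close to $\Omega$ in $\mathcal A$ there is $\Omega_\lambda$ with $y(\Omega_\lambda)<y(\Omega)$, which is precisely the failure of the local minimizer property. The delicate step is the critical case $P'(1)=0$, where one must go to second order; here the polygonal structure is essential, since the explicit sum above reduces the positivity of $P''(1)$ to the elementary geometric remark that not all edges of a bounded convex polygon with nonempty interior can be horizontal. (An analogous deformation $(x,y)\mapsto(x/\mu,\mu y)$ is equivalent under $\mu=1/\lambda$ and offers no further generality.)
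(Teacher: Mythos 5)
Your proof is correct, and it takes a genuinely different --- and more uniform --- route than the paper's. The paper proceeds by a case analysis on the number $k$ of vertices of $\Omega$ lying in the first quadrant: for $k\le 2$ (rectangles, rhombi, hexagons, octagons) it writes $y(\cdot)$ explicitly in the vertex coordinates and checks the absence of critical points, and for $k\ge 3$ it performs a parallel chord movement of an interior vertex, which fixes the area and increases the perimeter. Your single family $\Omega_\lambda=T_\lambda(\Omega)$ with $T_\lambda(x,y)=(\lambda x,y/\lambda)$ handles every polygon at once: $T_\lambda$ is unimodular, preserves convexity and commutes with both coordinate reflections, so $\Omega_\lambda\in\mathcal A$ with $A(\Omega_\lambda)=A(\Omega)$, and your derivative formulas check out (for each edge one indeed gets $f_i''(1)=2y_i^2(3x_i^2+y_i^2)/L_i^3\ge 0$, with equality iff $y_i=0$, which cannot hold for all edges of a nondegenerate convex polygon); the two-case split on $P'(1)$ then produces shapes arbitrarily close to $\Omega$ with equal area and strictly larger perimeter, i.e.\ strictly smaller $y$, which is exactly the failure of local minimality. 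One remark: contrary to your closing comment, the polygonal structure is not really essential to your argument --- the same computation, with the sum replaced by an integral over a boundary parametrization, shows that \emph{no} set in $\mathcal A$ is a local minimizer of $y(\cdot)$ (a stronger, and still true, statement consistent with the disk being the global maximizer and the infimum $0$ not being attained). What the paper's longer case analysis buys in exchange is a collection of explicit parametrizations and the parallel chord movement that are reused in Lemmas \ref{poly1} and \ref{poly2} to control the other coordinate $x(\cdot)$; your global stretch does not serve that purpose, but for the present lemma it is shorter and cleaner.
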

\begin{proof}
In order to prove the statement, it is enough to show that given a polygon $\Omega \in \mathcal A$, we can construct a small deformation (see Definition \ref{smalldef}) which makes $\y$ strictly decrease. The deformation will be explicitly described in the first quadrant and repeated in the other three quadrants, in order to preserve the double symmetry.

Let $\Omega$ be a given polygon in $\mathcal A$. Let $k$ be the number of boundary vertexes falling in the first quadrant $\{x\geq 0\,,\ y\geq 0\}$. Let $\{Q_1, \ldots, Q_k\}$ be such points, ordered in counter-clockwise sense.

{\it Case 1: $k=1$ or $k=2$ and $Q_1,Q_2$ on the axis.} This two cases correspond to rectangles and rhombi. They can be described, respectively, as the 1-parameter families $\{R_\ell\}_{\ell \in ]0,1]}$ and $\{S_\ell\}_{\ell \in ]0,1]}$ defined as follows: $R_\ell$ is the rectangle associated to $k=1$ with $Q_1=(1,\ell)$; $S_\ell$ is the rhombus associated to $k=2$ with $Q_1=(1,0), Q_2=(0,\ell)$. The functions
$$
]0,1] \ni \ell\mapsto R_\ell \in \mathcal A, \quad ]0,1] \ni \ell\mapsto S_\ell \in \mathcal A
$$
are continuous with respect to the complementary Hausdorff distance, moreover (see also the proof of Proposition \ref{closure}) the maps
$$
]0,1] \ni \ell\mapsto y(R_\ell)=\frac{\pi\ell}{(1+\ell)^2}, \quad 
]0,1] \ni \ell\mapsto y(S_\ell)=\frac{\pi\ell}{2(1+\ell^2)}
$$
are strictly increasing. This means that we can always increase/decrease $\ell$ to make $\y$ decrease. 

{\it Case 2: $k=2$ and either $Q_1$ or $Q_2$ on the coordinate axis.} Without loss of generality, we can assume that $Q_1$ is on the horizontal axis $y=0$, say $Q_1=(1,0)$, whereas $Q_2=(s,t)$ for some $s\in ]0,1]$ and $t>0$. In this case $\Omega$ is a hexagon with boundary points $Q_1$, $-Q_1$, $Q_2$ and the three reflections of $Q_2$ in the other three quadrants. Let us denote by $H_{s,t}$ such hexagon. As before, the map
$$
]0,1[\times ]0, +\infty[\ni (s,t)\mapsto H_{s,t}\in \mathcal A
$$
is continuous with respect to the complementary Hausdorff distance. A direct computation leads to
$$
 y(H_{s,t})=\frac{\pi}{2}\frac{ s (1+t)}{[s + \sqrt{(1-s)^2+t^2}]^2}.
$$
By computing the gradient of this function in the variables $s$ and $t$, 
it is immediate to check that there is no critical point in the stripe $(s,t)\in ]0,1]\times ]0, +\infty[$. As in the previous case, this implies that we can always find a direction which makes the directional derivative of $\y$ negative.

{\it Case 3: $k=2$ and neither $Q_1$ nor $Q_2$ are on the coordinate axis.} In this case $\Omega$ is an octagon. This class in $\mathcal A$ can be described as the three-parameters family $O_{s,t,u}$ associated to the two points in the first quadrant $Q_1=(1,s)$, $Q_2=(t,u)$, where $t<1$ and $u>s$. We have:
$$
y(O_{s,t,u})= \frac{\pi}{2} \frac{t u +  (1-t)(u+s)/2}{[ t + s + \sqrt{(u-s)^2 + (1-t)^2}]^2}
$$
Once again this function has no critical point in the region of $\mathbb R^3$ where $s,t,u$ live.

{\it Case 4: $k\geq 3$.} Here we proceed in a different way with respect to the previous cases: we will perform a parallel chord movement, consisting in making $Q_2$ slide on a line parallel to $Q_1Q_3$ passing through $Q_2$. To preserve symmetry, we do the same thing in the other three quadrants. Thanks to the standing assumptions, since the $Q_2$ is not aligned to $Q_1$ vertically nor to $Q_3$ horizontally, we can perform such sliding in two directions. The resulting shape is still convex, double symmetric, and has the same area (since we keep the point $Q_2$ at the same distance from the line passing through $Q_1Q_3$). The deformation can be done in two senses and at least one of the two makes the perimeter of the triangle $Q_1Q_2Q_3$ (and then of the entire shape) increase, namely the functional $\y$ decreases. When the triangle $Q_1Q_2Q_3$ is isosceles, both deformations make the perimeter increase. This concludes the proof.

\end{proof}

We are now in a position to state the following.
\begin{proposition}\label{onL-}
The function $L^-:]0,1]\to \mathcal D$ is continuous and strictly increasing.
\end{proposition}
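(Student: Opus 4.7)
The plan is to establish continuity first, then derive strict monotonicity via Lemma \ref{pcm}, which states that no polygon is a local minimizer of $y(\cdot)$.

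For continuity at $x_0\in(0,1]$ along a sequence $x_n\to x_0$, I would perform a standard two-sided squeeze. The upper bound $\limsup_n L^-(x_n)\le L^-(x_0)$ (trivial at $x_0=1$, where $L^-\le 1$) follows from choosing an optimizer $\Omega_0$ for $L^-(x_0)$ via Proposition \ref{existence}, invoking Corollary \ref{deformationx} to produce admissible competitors $\Omega_n\in\mathcal{A}$ with $x(\Omega_n)=x_n$ and $\Omega_n\to\Omega_0$ in complementary Hausdorff distance, and appealing to continuity of $y(\cdot)$ (Proposition \ref{continuity}). The matching lower bound $L^-(x_0)\le\liminf_n L^-(x_n)$ comes from Blaschke-selecting a Hausdorff convergent subsequence of optimizers $\tilde\Omega_n$ of $L^-(x_n)$; the limit cannot be degenerate because the explicit bounds in the proof of Proposition \ref{closure} show that $x(\cdot)$ vanishes on thin shapes, whereas $x(\tilde\Omega_n)=x_n\to x_0>0$.

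The core of the monotonicity argument is the claim that $L^-$ admits no weak interior local minimum. If $x_0\in(0,1)$ were such a point, an optimizer $\Omega_0$ of $L^-(x_0)$ would be a polygon by Theorem \ref{jimmy}, and Lemma \ref{pcm} would furnish a small Hausdorff deformation $\{\Omega_\varepsilon\}\subset\mathcal{A}$ with $y(\Omega_\varepsilon)<y(\Omega_0)=L^-(x_0)$ for all $\varepsilon>0$. Continuity of $x(\cdot)$ forces $x(\Omega_\varepsilon)\to x_0$, so $x(\Omega_\varepsilon)$ lies in any prescribed neighborhood of $x_0$ while $L^-(x(\Omega_\varepsilon))\le y(\Omega_\varepsilon)<L^-(x_0)$, a contradiction.

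Combining this with continuity and the boundary data $L^-(0^+)=0$, $L^-(1)=1$ (from Propositions \ref{existence} and \ref{closure}), I would conclude strict monotonicity. Interior weak local maxima are excluded as well: at such a point $x_0<1$ one has $L^-(x_0)<1=L^-(1)$ by Proposition \ref{bounds}, and then the intermediate value theorem applied to $L^-$ on $[x_0,1]$ produces either an interior point where $L^-$ strictly drops below $L^-(x_0)$ (giving a weak interior local minimum) or a constant plateau starting at $x_0$ (whose interior points are weak local minima), contradicting the previous step in either case. A continuous function on $(0,1]$ with the prescribed boundary values and no interior weak local extrema is strictly increasing. The main technical subtlety, which I expect to be the chief obstacle, is the careful treatment of the weak (non-strict) nature of the extrema---including ruling out plateaus---and the contradictions above are organized precisely to cover both strict extrema and constant subintervals.
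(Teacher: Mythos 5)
Your proposal is correct and follows essentially the same route as the paper: continuity is obtained from the two semicontinuity bounds (lower via Blaschke selection on optimizers, upper via the competitors from Corollary \ref{deformationx}), and strict monotonicity by showing that an interior weak local minimum of $L^-$ would force an optimizer to be a local minimizer of $y(\cdot)$, which is a polygon by Theorem \ref{jimmy}, contradicting Lemma \ref{pcm}. Your additional care in reducing ``not strictly increasing'' to the existence of a weak interior local minimum (handling plateaus and interior maxima via the intermediate value theorem) simply fills in a step the paper states without detail.
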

\begin{proof}
The proof is divided into three steps.

{\it Step 1: $L^-$ is l.s.c.} This property follows by construction. Let $x\in ]0,1]$ and $\{x_n\}\subset (0,1]$ an arbitrary sequence converging to $x$. We need to prove that
$$
L^-(x) \leq \liminf_n L^-(x_n).
$$
Assume (without loss of generality) that the $\liminf$ is a limit. Let $\Omega_n$ be the sequence of optimal shapes for $L^-(x_n)$. Since $x>0$, we infer that also $x_n$ are (uniformly) far from 0. This ensures the existence of a subsequence (not relabeled) $\Omega_n$ which converges to some admissible $\Omega\in \mathcal A$. By continuity $x(\Omega)=\lim_n x(\Omega_n)=\lim_n x_n =x$.  Thus by definition of $L^-(x)$ we infer that $y(\Omega) \geq L^-(x)$. Since $y(\Omega)=\lim_n y(\Omega_n)$ we conclude that
$$
L^-(x) \leq y(\Omega) = \lim_n y(\Omega_n).
$$ 

{\it Step 2: $L^-$ is u.s.c.} Take $x_0\in ]0,1]$ and take $x_n\to x_0$ satisfying
$$
\limsup_{x\to x_0} L^-(x) = \lim_{n\to \infty} L^-(x_n).
$$
Take $\Omega_0$ an optimal shape for $L^-(x_0)$, namely such that $L^-(x_0) = y(\Omega_0)$. Its existence is ensured by Proposition \ref{existence}. In view of Corollary \ref{deformationx}, there exists a sequence of  shapes $\Omega_n$ with the following properties: for every $n\in \mathbb N$
$$
\Omega_n\in \mathcal A, \quad x(\Omega_n)=x_n;
$$
moreover, in the limit as $n\to \infty$, $\Omega_n\to \Omega_0$ with respect to the complementary Hausdorff distance. In particular, $\lim_{n\to \infty} y(\Omega_n)= y(\Omega_0)$. Exploiting now the definition of $L^-$, we obtain
$$
\limsup_{x\to x_0} L^-(x) = \lim_{n\to \infty} L^-(x_n) \leq \lim_{n\to \infty} y(\Omega_n) = y(\Omega_0) = L^-(x_0).
$$

{\it Step 3: $L^-$ is strictly increasing.} By the previous steps, $L^-$ is continuous. Moreover, its infimum is 0 and it attains its maximum 1 for $x=1$. Assume by contradiction that $L^-$ is not strictly increasing. Then there exists $x_0\in ]0,1[$ local minimizer of $L^-$. Therefore there exists a neighborhood $U(x_0)$ of $x_0$ such that 
$$
\Omega \in \mathcal A, \quad x(\Omega) \in U(x_0) \quad \Rightarrow \quad L^-(x_0) \leq y(\Omega).
$$ 
Therefore any optimal shape $\Omega_0$ for $L^-(\Omega_0)$, namely such that $L^-(x_0)=y(\Omega_0)$, is a local minimizer for $\y$. In view of Theorem \ref{jimmy}, $\Omega_0$ is a polygon. The contradiction comes from Lemma \ref{pcm}, which states that a polygon can not be a local minimzer for $\y$.
\end{proof}

\section{The upper boundary of the diagram}\label{sec-L+}
In this section we investigate the properties of the upper boundary of the diagram, described by the graph of the function $L^+$ (see \eqref{defLplus}). We recall that $L^+(x)$ is defined as the maximum of the shape functional $\y$ when the shape functional $\x$ is fixed (equal to $x$). As we have proved in in Theorem \ref{jimmy}, optimal shapes for $L^+$ are $C^{1,1}$.



In order to prove the regularity and monotonicity of $L^+$, we need two preparatory results.

\begin{lemma}\label{deformationy}
Let $x_0\in ]0,1[$ and $\Omega_0\in \mathcal A$ with $x(\Omega_0)=x_0$ and $y(\Omega_0)=L^+(x_0)$. 
Let $\{x_n\}_{n\in \mathbb N} \subset ]0,1[$ be a sequence such that $x_n \to x_0$ as $n\to \infty$. Then there exists a sequence of shapes $\{\Omega_n\}_{n\in \mathbb N}\subset \mathcal A$ such that for $n$ sufficiently large $x(\Omega_n)=x_n$ and in the limit as $n\to \infty$ there holds $\Omega_n \to \Omega_0$ with respect to the complementary Hausdorff distance.
\end{lemma}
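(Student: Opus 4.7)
The plan is to mirror the proof of Corollary \ref{deformationx}: construct two small deformations $\{\Omega_\e^\pm\}_{\e\in[0,\e_0)}$ of $\Omega_0$ inside $\mathcal A$, continuous with respect to the complementary Hausdorff distance, such that for every small $\e>0$ one has $x(\Omega_\e^+)>x_0$ and $x(\Omega_\e^-)<x_0$. Since $x_0<1$, Proposition \ref{bounds} forces $\Omega_0\neq\mathbb D$, while Theorem \ref{jimmy} ensures that $\Omega_0$ is $C^{1,1}$. Once the sign conditions are in place, for every sequence $x_n\to x_0$ the intermediate value theorem combined with the continuity of $\x$ (Proposition \ref{continuity}) produces, according to the sign of $x_n-x_0$, parameters $\e_n\to 0^+$ with $x(\Omega_{\e_n}^\pm)=x_n$; setting $\Omega_n:=\Omega_{\e_n}^\pm$ then yields the required Hausdorff convergence by continuity of the deformation at $\e=0$.

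I propose to build the two deformations as \emph{cap-cuts}, in the same spirit as the triangle-cuts of Lemma \ref{poly1}, except that the triangles are replaced by intersections with half-planes, since $\Omega_0$ is $C^{1,1}$ and need not possess vertices. Pick a boundary point $V_+$ (respectively $V_-$) of $\Omega_0$ realising $\max_{\partial\Omega_0}(x^2+y^2)$ (respectively the minimum); by double symmetry each one belongs to an orbit of two or four points under the reflections about the coordinate axes. For every $V$ in such an orbit set
$$
C_\e(V):=\Omega_0\cap\{p\in\mathbb R^2\ :\ p\cdot V/\|V\|>\|V\|-\e\},
$$
and define
$$
\Omega_\e^\pm:=\Omega_0\setminus\bigcup_{V\in\mathrm{orb}(V_\pm)} C_\e(V).
$$
Being the intersection of $\Omega_0$ with finitely many symmetrically placed closed half-planes, each $\Omega_\e^\pm$ is convex and doubly symmetric, hence admissible, and converges to $\Omega_0$ in the complementary Hausdorff distance as $\e\to 0^+$.

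The hard part of the argument is controlling the sign of $x(\Omega_\e^\pm)-x_0$ for small $\e>0$. Expanding exactly as in the proof of Lemma \ref{poly1}, this sign coincides with that of
$$
-2\,\frac{A(C_\e)}{A(\Omega_0)}+\frac{W(C_\e)}{W(\Omega_0)},
$$
namely with that of $W(C_\e)/A(C_\e)-2W(\Omega_0)/A(\Omega_0)$; as $\e\to 0^+$ the cap shrinks to $V$, so $W(C_\e)/A(C_\e)\to \|V\|^2$, and the sign becomes that of $\|V_\pm\|^2-2\fint_{\Omega_0}(x^2+y^2)$. Writing $\Omega_0$ in polar coordinates as at the end of the proof of Proposition \ref{bounds} yields
$$
2\fint_{\Omega_0}(x^2+y^2)=\frac{\int_0^{2\pi}\rho^4(\theta)\,d\theta}{\int_0^{2\pi}\rho^2(\theta)\,d\theta},
$$
and the elementary two-sided estimate
$$
\min_\theta\rho^2(\theta)\cdot\int_0^{2\pi}\rho^2\,d\theta<\int_0^{2\pi}\rho^4(\theta)\,d\theta<\max_\theta\rho^2(\theta)\cdot\int_0^{2\pi}\rho^2\,d\theta
$$
(both strict because $\Omega_0\neq\mathbb D$) gives
$$
\|V_-\|^2<2\fint_{\Omega_0}(x^2+y^2)<\|V_+\|^2,
$$
so that $x(\Omega_\e^+)>x_0$ and $x(\Omega_\e^-)<x_0$ for every small $\e>0$, as required. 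The chief obstacle, compared with the polygonal case of Corollary \ref{deformationx}, is precisely to bypass the vertex-based case analysis of Lemma \ref{poly2} via this single, coordinate-free construction that simultaneously handles the upward and downward deformation and is tractable for $\x$ through the polar identity above.
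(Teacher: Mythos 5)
Your construction of the $x$-increasing deformation is sound and genuinely different from the paper's: the paper works with shape derivatives, showing via Hadamard's formula that if $x'(\Omega_0,V)=0$ for every field $V$ supported on the strictly convex part $\Gamma_+$ of $\partial\Omega_0$, then $\|p\|^2=2W(\Omega_0)/A(\Omega_0)$ on $\Gamma_+$, forcing $\Omega_0$ to be the disk; a single field $V$ with $x'(\Omega_0,V)\neq 0$ then gives both directions at once by flowing along $\pm V$. Your cap at $V_+$ does work, because Cauchy--Schwarz gives $p\cdot V_+/\|V_+\|\leq\|p\|\leq\|V_+\|$ with equality only at $p=V_+$, so the supporting line at $V_+$ with outer normal $V_+/\|V_+\|$ touches $\overline{\Omega}_0$ only at $V_+$ and the cap genuinely shrinks to that point.

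The gap is in the $x$-decreasing deformation. At the nearest boundary point $V_-$ the line $T=\{p:\ p\cdot V_-/\|V_-\|=\|V_-\|\}$ is indeed a supporting line of $\Omega_0$ (it supports the inscribed disk $B(0,\|V_-\|)\subset\Omega_0$ at $V_-$), but the face $\overline{\Omega}_0\cap T$ need not reduce to $\{V_-\}$: whenever $\partial\Omega_0$ contains a flat segment $S$ through $V_-$ lying on $T$ --- which is perfectly compatible with the $C^{1,1}$ regularity guaranteed by Theorem \ref{jimmy}, and is exactly what happens for a stadium, where $S$ is the entire straight side --- the cap $C_\e(V_-)$ collapses onto $S$, not onto $V_-$. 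In that case
$$
\fint_{C_\e(V_-)}(x^2+y^2)\ \longrightarrow\ \|V_-\|^2+\fint_S\|p-V_-\|^2\ >\ \|V_-\|^2,
$$
(for the stadium with straight sides of half-length $L$ the limit is $1+L^2/3$, not $1$), so your key assertion ``$W(C_\e)/A(C_\e)\to\|V_-\|^2$'' fails, and the inequality $\|V_-\|^2<2\fint_{\Omega_0}(x^2+y^2)$ no longer determines the sign of $x(\Omega_\e^-)-x_0$. One would need to prove instead that $\fint_S\|p\|^2<2\fint_{\Omega_0}(x^2+y^2)$ for the face $S$ at the nearest point, which is not obvious and is not addressed. (A secondary, fixable point: since you only establish a sign condition and not monotonicity of $\e\mapsto x(\Omega_\e^\pm)$, you should select $\e_n$ as, say, the smallest solution of $x(\Omega_\e^\pm)=x_n$ to guarantee $\e_n\to 0$.) The cleanest repair is probably to abandon the cut at $V_-$ and argue as the paper does, via a localized boundary perturbation on $\Gamma_+$ whose first-order effect on $\x$ is nonzero, which handles both signs simultaneously.
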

\begin{proof}
In view of the assumption $y(\Omega_0)=L^+(x_0)$ and Theorem \ref{jimmy}, we infer that $\Omega_0$ is of class $C^{1,1}$. As already done in the previous Section (cf. Lemmas \ref{poly1} and \ref{poly2}, Corollary \ref{deformationx}), it is enough to provide a deformation field $V$ localized on the strictly convex part of the boundary of $\Omega_0$ for which $y'(\Omega_0,V)\neq 0$. Then, using either $V$ or $-V$, we can make the first order shape derivative positive (or negative): the associated continuous deformation $t\mapsto \Omega_t$, for $t$ small enough, is such that $t\mapsto x(\Omega_t)$ is increasing (resp. decreasing). To conclude it is enough to take $\Omega_n:=\Omega_{t_n}$ being $t_n$ such that $x(\Omega_{t_n})= x_n$.

Let us now prove that such $V$ exists. Assume by contradiction that for every $V:\mathbb R^2 \to \mathbb R^2$ smooth, localized on the strictly convex part $\Gamma_+$ of the boundary $\partial \Omega_0$. Then, using the Hadamard's formula
\begin{align*}
x'(\Omega_0,V) & =\frac{A(\Omega_0)}{2\pi W(\Omega_0) }[2 W(\Omega_0)  A'(\Omega_0,V) - A(\Omega_0,V) W'(\Omega_0)]
\\ & = \frac{A(\Omega_0)}{2\pi W(\Omega_0) }\int_{\partial \Omega_0} \left[  2W(\Omega_0) - A(\Omega_0)\|x\|^2 \right] \, \mathrm{d}\mathcal H^1(x),
\end{align*}
we deduce that the shape under study satisfies
$$
\|x\|^2 = \frac{2W(\Omega_0)}{A(\Omega_0)} \quad \forall x \in \Gamma_+.
$$
In other words, $\Gamma_+$ is made of arcs of circle of radius $\sqrt{2W(\Omega_0)/A(\Omega_0)}$ centered at the origin. This implies that $\Omega_0$ is the disk. This is excluded by the assumptions. This concludes the proof.
\end{proof}

\begin{lemma}\label{maxloc}
Let $\Omega \in \mathcal A$ be of class $C^{1,1}$ different from the disk. Then it is not a local maximiser of $\y$.
\end{lemma}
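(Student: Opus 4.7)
My plan is to exhibit an explicit small deformation of $\Omega$ inside $\mathcal A$ along which $y(\cdot)$ strictly increases at first order. The natural candidate is the Minkowski average with the unit disk: set
$$
\Omega_t := (1-t)\Omega \oplus t\mathbb D, \qquad t\in[0,\varepsilon_0],
$$
for some small $\varepsilon_0 > 0$. Since both $\Omega$ and $\mathbb D$ are convex and symmetric with respect to the two coordinate axes, so is $\Omega_t$; thus $\Omega_t\in\mathcal A$, and continuity of $t\mapsto\Omega_t$ with respect to the complementary Hausdorff distance follows from the linearity of the support function under Minkowski sum (as already used in Proposition \ref{arcs}).

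The derivative of $y(\Omega_t)$ at $t=0$ is computed via two classical identities from convex geometry. The perimeter is linear with respect to Minkowski sum, so
$$
P(\Omega_t) = (1-t)P(\Omega) + 2\pi t.
$$
The area satisfies the Steiner-type expansion
$$
A(\Omega_t) = (1-t)^2 A(\Omega) + (1-t) t\, P(\Omega) + \pi t^2,
$$
which follows from the bilinear expansion of the mixed area together with the identity $V(\Omega,\mathbb D) = P(\Omega)/2$. Differentiating $y(\Omega_t) = 4\pi A(\Omega_t)/P(\Omega_t)^2$ at $t=0$ and simplifying the resulting expression gives, after an elementary algebraic manipulation,
$$
\frac{d}{dt} y(\Omega_t)\Big|_{t=0} = \frac{4\pi}{P(\Omega)}\bigl[1 - y(\Omega)\bigr].
$$
Since $\Omega$ is not the disk, the classical isoperimetric inequality is strict and $y(\Omega) < 1$, so this derivative is strictly positive. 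Consequently $y(\Omega_t) > y(\Omega)$ for every $t>0$ small, which contradicts the hypothesis that $\Omega$ is a local maximizer of $y(\cdot)$ in $\mathcal A$.

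The only real point to verify is the first-order expansion above: once one invokes the linearity of the perimeter and the Steiner-type expansion of the area (both recalled in \cite{Sch}), the derivative computation is routine. I remark that the argument never actually uses the $C^{1,1}$ regularity of $\Omega$; the assumption is simply inherited from the setting in which Lemma \ref{maxloc} is invoked, namely to rule out optimizers of $L^+$ being local maximizers of $y(\cdot)$.
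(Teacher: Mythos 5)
Your proof is correct, but it takes a genuinely different route from the paper's. The paper argues via shape derivatives: it shows that any $C^{1,1}$ critical point of $\y$ must have its strictly convex boundary part made of circular arcs of radius $R=2A(\Omega)/P(\Omega)$, identifies such a shape as a Minkowski sum $K\oplus B(0,R)$ (a ``generalized stadium''), and then uses the Steiner formulas for $A$ and $P$ to force $K=\emptyset$, i.e.\ $\Omega=\mathbb D$. You instead interpolate directly with the disk via $\Omega_t=(1-t)\Omega\oplus t\mathbb D$ and compute the one-sided derivative at $t=0$; your expansions of $A(\Omega_t)$ and $P(\Omega_t)$ are the standard mixed-area identities, and the algebra checks out (with $a'(0)=P-2A$ and $p'(0)=2\pi-P$ the cross terms cancel, leaving $\tfrac{4\pi}{P}(1-y(\Omega))>0$ by the strict isoperimetric inequality). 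The family stays in $\mathcal A$ and is Hausdorff-continuous at $t=0$, so $\Omega$ is indeed not a local maximizer. Your route is more elementary and strictly stronger, since it never uses the $C^{1,1}$ hypothesis and rules out every admissible set other than the disk; it is essentially the alternative the authors themselves indicate in the remark following the lemma, where they note the result can be deduced from \cite[Lemma 3.5]{FL} by the very same Minkowski-sum-with-a-ball comparison. What the paper's longer argument buys is the structural classification of the critical shapes as generalized stadiums, which is of independent interest (e.g.\ for the numerics on $L^+$) but is not needed for the lemma itself.
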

\begin{proof}
Let $\Omega \in \mathcal A$ be of class $C^{1,1}$. The boundary $\partial \Omega$ has the following structure: 
$$\partial \Omega = \Gamma_0 \cup \Gamma_+,$$
where $\Gamma_0$ is the union of the flat parts (with $0$ curvature) and $\Gamma_+$ is the union of the strictly convex parts (with positive curvature). In the following we denote by $H(x)$ the curvature at a point $x\in \partial \Omega$. Note that $\Gamma_+$ can not be the empty set, by the regularity assumption on $\Omega$.

In the following steps we find a small deformation, acting on $\Gamma_+$ which makes $\y$ (strictly) increase. This will be done using shape derivatives, namely showing that $y'(\Omega,V)>0$ for some vector field $V:\mathbb R^2 \to \mathbb R^2$.

\smallskip

{\it Step 1. Generic shape: deformation localized on $\Gamma_+$.} Consider a smooth vector fields $V$ with support localized in a part of $\Gamma_+$. Two situations may occur: either there exists $V$ such that $y'(\Omega,V)\neq 0$ or $y'(\Omega,V)=0$ for every $V$. In the former, up to changing $V$ into $-V$, we obtain a small deformation of $\Omega$ making $\y$ increase, concluding the proof.
In the second situation, using the Hadamard's formula
\begin{align*}
y'(\Omega,V)& = \frac{4 \pi}{P^3(\Omega)} [  A'(\Omega,V) P(\Omega) - 2 A(\Omega) P'(\Omega,V)] 
\\
& = \frac{4 \pi}{P^3(\Omega)} \int_{\partial \Omega} \left[  P(\Omega) - 2 A(\Omega) H(x)    \right] V(x)\cdot n(x)\, \mathrm{d}\mathcal H^{1}(x),
\end{align*}
we deduce that the shape $\Omega$ under study satisfies the following (optimality) condition:
$$
H(x)= \frac{P(\Omega)}{2A(\Omega)}\quad \forall x \in \Gamma_+.
$$
In other words, $\Gamma_+$ is made of arcs of circle of radius
$$
R:=\frac{2A(\Omega)}{P(\Omega)}.
$$
Since $\Omega$ is not the disk, $\Gamma_+$ doe not cover the whole $\partial \Omega$, so that $\Gamma_0\neq \emptyset$. We call such domains {\it generalized stadiums}.

\smallskip

{\it Step 2. The only local maximizer of $\y$ among generalized stadiums is the disk.} Let $\Omega$ be a generalized stadium satisfying the optimality condition described in Step 1. 
Let $\{\gamma_i\}_{i=1}^k$ denote the boundary arcs, labeled in counter-clock wise sense, for some $n \in \mathbb N$. Let $C_i$ denote the center of the circle of radius $R$ to which $\gamma_i$ belongs to. Furthermore, denote by $S_{i}$, $i=1, \ldots, n$, the segment joining the arc $\gamma_i$ to the arc $\gamma_{i+1}$, with the identification $\gamma_{n+1}:=\gamma_1$.  Using the $C^{1,1}$ regularity of the boundary, we infer that every $S_i$ is tangent to the arcs $\gamma_i$ and $\Gamma_{i+1}$, in particular the segments joining $C_i$ and $C_{i+1}$ to the junction points arc-segment are orthogonal to the segment. This means that the quadrilateral obtained by considering the segment $S_i$ and the segment $C_iC_{i+1}$ is a rectangle. Repeating the same procedure to every pair arc-segment, we infer that $\Omega$ is the union of the polygon $K:=C_1C_2\ldots C_n$, $n$ circular sectors centered at $C_i$, and $n$ rectangles constructed on the sides of $K$. In other words, $\Omega$ is the Minkowski sum $\Omega=K\oplus B(0,R)$. By the classical properties of area and perimeter of Minkowski sums, we have
$$
A(\Omega)= A(K) + R P(K) + \pi R^2, \quad P(\Omega)= P(K) + 2 \pi R.
$$
Inserting these two equalities into the optimality condition relating $R$, $A(\Omega)$, and $P(\Omega)$, we get
$$
2A(K) + 2  P(K) R + 2 \pi R^2 = P(K) R + 2 \pi R^2 \quad \Rightarrow \quad 2A(K) + P(K) R = 0,
$$
implying that $K=\emptyset$. Thus the only possible shape is the disk, excluded by assumption. This concludes the proof.
\end{proof}

\begin{remark} We point out that the same result can be also deduced from \cite[Lemma 3.5]{FL}: the authors prove that the ball is the only local minimizer of $P$ among planar convex sets with area $1$. Their proof consists in comparing $P(\Omega)$ with $P(\Omega_\e)$, being $\Omega_\e$ the Minkowski sum $\Omega \oplus \e B(0,1)$ normalized with area 1.
\end{remark}

We are now in a position to state the following.

\begin{proposition}\label{onL+} The function $L^+:]0,1]\to \mathcal D$ is continuous and strictly increasing.
\end{proposition}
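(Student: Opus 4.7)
The plan is to mirror, \emph{mutatis mutandis}, the three-step strategy already carried out for Proposition \ref{onL-}: first upper semi-continuity, then lower semi-continuity, and finally strict monotonicity via a contradiction argument. The crucial substitutions are that the role previously played by ``no admissible polygon is a local minimiser of $y(\cdot)$'' (Lemma \ref{pcm}) will now be taken by ``no $C^{1,1}$ admissible set different from $\mathbb D$ is a local maximiser of $y(\cdot)$'' (Lemma \ref{maxloc}), and that Corollary \ref{deformationx} will be replaced by its upper-boundary counterpart Lemma \ref{deformationy}.

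For upper semi-continuity at $x\in ]0,1]$, I would take an arbitrary sequence $x_n\to x$ and, up to extraction, assume $L^+(x_n)\to \limsup_{x'\to x} L^+(x')$. Let $\Omega_n\in \mathcal A$ be a maximiser for $L^+(x_n)$, which exists by Proposition \ref{existence}. Since $x>0$, the constraint $x(\Omega_n)=x_n$ keeps the sequence uniformly non-degenerate, so by Lemma \ref{compactbox} and Blaschke selection I can extract a further subsequence Hausdorff-converging to some $\Omega\in \mathcal A$. Proposition \ref{continuity} then gives $x(\Omega)=x$ and $y(\Omega)=\lim_n y(\Omega_n)$, whence $\limsup L^+(x_n)=y(\Omega)\leq L^+(x)$. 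For lower semi-continuity at $x_0\in ]0,1]$, I would fix an optimiser $\Omega_0$ for $L^+(x_0)$ (again via Proposition \ref{existence}) and an arbitrary $x_n\to x_0$. Lemma \ref{deformationy} provides admissible shapes $\Omega_n$ with $x(\Omega_n)=x_n$ and $\Omega_n\to \Omega_0$ in the complementary Hausdorff sense, so that
$$
\liminf_n L^+(x_n)\geq \liminf_n y(\Omega_n)=y(\Omega_0)=L^+(x_0),
$$
which completes the continuity of $L^+$ on $]0,1]$.

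Strict monotonicity is the step that requires the most care, and is the one I would expect to be the main conceptual obstacle. By the previous two steps $L^+$ is continuous on $]0,1]$, with $L^+(1)=1$ (Proposition \ref{bounds}) and $L^+(x)\to 0$ as $x\to 0^+$ (Proposition \ref{existence}). If $L^+$ were not strictly increasing, continuity would yield some $x_0\in ]0,1[$ at which $L^+$ attains a local maximum. Let $\Omega_0\in \mathcal A$ be a maximiser for $L^+(x_0)$: Proposition \ref{bounds} forces $\Omega_0\neq \mathbb D$ since $x_0<1$, and Theorem \ref{jimmy} guarantees that $\Omega_0$ is of class $C^{1,1}$. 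I would then deduce that $\Omega_0$ is in fact a local maximiser of the unconstrained functional $y(\cdot)$ on $\mathcal A$ itself: for every $\Omega\in \mathcal A$ sufficiently close to $\Omega_0$ (complementary Hausdorff sense), continuity of $x(\cdot)$ places $x(\Omega)$ inside the neighbourhood of $x_0$ where $L^+$ attains its local maximum, so that the very definition of $L^+$ as a supremum yields
$$
y(\Omega)\leq L^+\bigl(x(\Omega)\bigr)\leq L^+(x_0)=y(\Omega_0).
$$
This directly contradicts Lemma \ref{maxloc}, and hence closes the argument. The delicate point, which I would state explicitly and carefully, is precisely this transfer from local-extremality of the scalar profile $L^+$ to local-extremality of the shape functional $y(\cdot)$ on the whole of $\mathcal A$; once this is made rigorous, the ingredients from Theorem \ref{jimmy} and Lemma \ref{maxloc} take over.
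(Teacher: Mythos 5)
Your proposal is correct and follows essentially the same three-step strategy as the paper: upper semi-continuity via Blaschke selection and continuity of the coordinate functionals, lower semi-continuity via the deformation Lemma \ref{deformationy}, and strict monotonicity by contradiction through Lemma \ref{maxloc}. Your explicit justification of the transfer from a local maximum of the profile $L^+$ to local maximality of $y(\cdot)$ on $\mathcal A$, together with the remark that $x_0<1$ forces $\Omega_0\neq \mathbb D$ so that Theorem \ref{jimmy} and Lemma \ref{maxloc} genuinely apply, is in fact slightly more careful than the paper's own wording.
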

\begin{proof}
The proof is divided into three steps.

{\it Step 1: $L^+$ is u.s.c.} This property follows by construction. Let $x\in ]0,1]$ and $\{x_n\}\subset (0,1]$ an arbitrary sequence converging to $x$. We need to prove that
$$
L^+(x) \geq \limsup_n L^+(x_n).
$$
Assume (without loss of generality) that the $\limsup$ is a limit. Let $\Omega_n$ be the sequence of optimal shapes for $L^+(x_n)$. Since $x>0$, we infer that also $x_n$ are (uniformly) far from 0. This ensures the existence of a subsequence (not relabeled) $\Omega_n$ which converges to some admissible $\Omega\in \mathcal A$. By continuity $x(\Omega)=\lim_n x(\Omega_n)=\lim_n x_n =x$.  Thus by definition of $L^+(x)$ we infer that $y(\Omega) \leq L^+(x)$. Since $y(\Omega)=\lim_n y(\Omega_n)$ we conclude that
$$
L^+(x) \geq y(\Omega) = \lim_n y(\Omega_n).
$$ 

{\it Step 2: $L^+$ is l.s.c.} This property can be proved following the vary same strategy adopted for the u.s.c. of $L^-$ in the previous section. For the benefit of the reader, we rewrite it in this case. Take $x_0\in ]0,1]$ and take $x_n\to x_0$ satisfying
$$
\liminf_{x\to x_0} L^+(x) = \lim_{n\to \infty} L^+(x_n).
$$
Take $\Omega_0$ an optimal shape for $L^+(x_0)$, namely such that $L^+(x_0) = y(\Omega_0)$. Its existence is ensured by Proposition \ref{existence}. Moreover, by scale invariance of the shape functionals involved, we may assume that $A(\Omega_0)=1$.  In view of Lemma \ref{deformationy}, there exists a sequence of  shapes $\Omega_n$ with the following properties: for every $n\in \mathbb N$
$$
\Omega_n\in \mathcal A, \quad A(\Omega_n)=1, \quad x(\Omega_n)=x_n;
$$
moreover, in the limit as $n\to \infty$, $\Omega_n\to \Omega_0$ with respect to the complementary Hausdorff distance. In particular, $\lim_{n\to \infty} y(\Omega_n)= y(\Omega_0)$. Exploiting now the definition of $L^+$, we obtain
$$
\liminf_{x\to x_0} L^+(x) = \lim_{n\to \infty} L^+(x_n) \geq \lim_{n\to \infty} y(\Omega_n) = y(\Omega_0) = L^+(x_0).
$$

{\it Step 3: $L^+$ is strictly increasing.} By the previous steps, $L^+$ is continuous. Moreover, its infimum is 0 and it attains its maximum 1 for $x=1$. Assume by contradiction that $L^+$ is not strictly increasing. Then there exists $x_0\in ]0,1[$ local maximizer of $L^+$. Therefore there exists a neighborhood $U(x_0)$ of $x_0$ such that 
$$
\Omega \in \mathcal A, \quad x(\Omega) \in U(x_0) \quad \Rightarrow \quad L^+(x_0) \geq y(\Omega).
$$ 
This is in contradiction with Lemma \ref{maxloc}.
\end{proof}

\section{The ratio $P^2A/W$}\label{sec-ratio}
In this section, we are interested in the maximization of the ratio $P^2A/W$ among sets in the class $\mathcal{A}$.
We have two different motivations for this study:
\begin{itemize}
\item First of all, it will help drawing the Blaschke-Santal\'o diagram $\mathcal{D}$ since, maximizing 
this ratio is equivalent to minimizing $y(\Omega)/x(\Omega)$ and therefore will give a limit line below our diagram.
Moreover, we will see later that the lower part of our diagram precisely coincides with the line $y=\pi^2 x/12$
for $x\in (0, 3/\pi]$ that corresponds to all rhombi between the segment and the square (see the proof of Theorem \ref{theoF}).
\item In his paper \cite{Polya}, G. P\'olya studies this functional $F(\Omega):=P^2(\Omega) A(\Omega)/W(\Omega)$
as a characteristic example of a shape functional whose maximizer is not the disk. This is an argument against the
heuristic claim: "when the problem has many symmetries, the disk should be the optimal domain". Indeed the value
of $F$ for the disk is $8\pi^2 <96$ where $96$ is the value for all rhombi. Actually, G. P\'olya gives the conjecture
that the actual maximizer is the equilateral triangle, for which $F=108$. Our theorem below gives the optimal
domains among convex sets with two orthogonal axis of symmetry.
\end{itemize}
Let us give the main theorem of this section. We denote by $F(\Omega)$ the ratio $\displaystyle 
F(\Omega):=P^2(\Omega) A(\Omega)/W(\Omega)$.
\begin{theorem}\label{theoF}
The maximizers of $F(\Omega)$ among sets in the class $\mathcal{A}$ (i.e. convex sets with two orthogonal
axis of symmetry) are all rhombi (for example with vertices $(1,0);(0,H);(-1,0):(0;-H)$ for any $H>0$).
\end{theorem}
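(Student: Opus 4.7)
The plan is to recast the claim via the identity $F(\Omega) = 8\pi^2 \x(\Omega)/\y(\Omega)$, so that $F\leq 96$ is equivalent to $\y(\Omega)/\x(\Omega) \geq \pi^2/12$, with equality on rhombi. The easy direction --- that rhombi saturate the bound --- follows from $A(S_\ell) = 2\ell$, $P(S_\ell) = 4\sqrt{1+\ell^2}$, $W(S_\ell) = \ell(1+\ell^2)/3$, which give $F(S_\ell) \equiv 96$.

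Existence of a maximizer is handled by the compactness arguments already in Section \ref{sec-definitions}: restriction to a fixed box (Lemma \ref{compactbox}) and Blaschke selection. A degenerating maximizing sequence is ruled out because thin rectangles have $F\to 48$, while thin rhombi have $F \equiv 96$ but are themselves admissible rhombi. Since minimizing $\y/\x$ amounts to minimizing $\y$ on a level set of $\x$, Theorem \ref{jimmy} ensures that any maximizer is polygonal.

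The heart of the argument is a case analysis on polygons in $\mathcal{A}$, organized as in Lemmas \ref{poly1}, \ref{poly2}, and \ref{pcm} by the number and position of vertices in the first quadrant. Rectangles give $F(R_\ell) = 48(1+\ell)^2/(1+\ell^2) \leq 96$ with equality only at the square $R_1 = S_1$; rhombi give $F \equiv 96$. For hexagons $H_{s,t}$, direct computation of $A$, $P$, and $W$ reduces $F(H_{s,t}) \leq 96$ to the polynomial inequality
\[
(1+s)\bigl[\sqrt{(1-s)^2+t^2}+s\bigr]^2 \leq (1+s)(1+s^2) + t^2(1+3s).
\]
I would prove this by isolating the dependence on $t$: after cancellation the inequality becomes $(1+s)\bigl[\sqrt{(1-s)^2+t^2} - 1 + s/2\bigr] \leq t^2$, whose right-hand side minus left-hand side, viewed as a function of $t$, has a single interior critical point precisely when $s \geq 1/3$, at which its value factors as $(1-s)(1-7s)/4$ and is non-positive on $[1/3,1]$; for $s<1/3$ the function is monotone decreasing in $t$ from the negative initial value $-s(1+s)/2$. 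For octagons $O_{s,t,u}$ and polygons with more Q1 vertices, a similar single-variable analysis works; alternatively, one can invoke a parallel chord movement (Case 4 of Lemma \ref{pcm}) which preserves $\x$ while strictly decreasing $\y$, hence strictly decreasing $F$, iteratively reducing the vertex count.

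The main obstacle is the algebra of the hexagon and octagon cases: the resulting inequalities are not symmetric in the vertex coordinates and each requires a careful one-variable optimization. The equality characterization then isolates exactly the rhombi, because every intermediate inequality is strict away from the rhombic degenerations (in the hexagon case, $s \to 0$ collapses the off-axis vertices onto the $y$-axis and produces the rhombus $S_t$).
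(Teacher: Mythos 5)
Your reduction $F=8\pi^2\,x(\Omega)/y(\Omega)$, the use of Theorem \ref{jimmy} to restrict to polygons, and the explicit verification for rectangles and rhombi are all correct. The hexagon computation also checks out: with $d=\sqrt{(1-s)^2+t^2}$ one indeed has $F(H_{s,t})=96(1+s)(s+d)^2/\bigl[t^2(1+3s)+(1+s)(1+s^2)\bigr]$, the inequality $F\le 96$ is equivalent to $(1+s)(d-1+s/2)\le t^2$, and the values $(1-s)(1-7s)/4$ and $-s(1+s)/2$ you report are the correct values of the quantity LHS$-$RHS at the interior critical point and at $t=0$ (you label it ``right-hand side minus left-hand side'', but your signs and conclusions are those of LHS$-$RHS, so this is only a slip of wording). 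Up to this point your route is more elementary than the paper's.

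The genuine gap is the treatment of polygons with many vertices, which is where the whole difficulty of the theorem sits. The octagon case is merely not carried out (three parameters, so perhaps feasible), but for $k\ge 3$ vertices in the first quadrant ``a similar single-variable analysis'' cannot work: the number of free parameters is unbounded and there is no closed formula for $F$ to analyse. Your fallback, the parallel chord movement, fails for two reasons. First, it preserves the area $A$ but not $x(\Omega)=A^2/(2\pi W)$, since it changes $W$. Second, even if you had a deformation fixing $x$ and strictly decreasing $y$, that would strictly \emph{increase} $F=8\pi^2x/y$, not decrease it as you assert; and what is actually needed to exclude a candidate maximizer is a deformation that increases $F$, which forces you to control the competition between the growth of $P^2$ and the growth of $W$ along the chord --- both are convex in the chord parameter, so the sign of the net effect is not at all immediate. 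This is precisely what occupies Steps 2--4 of the paper's proof: the full Hessian $\mathcal{H}_F$ of $F$ with respect to the two coordinates of a free vertex is computed, an affine change of variables normalizes $H$, and either $(1,-1)\mathcal{H}_F(1,-1)^T$ or the trace of $\mathcal{H}_F$ is shown to be positive using the global bounds on $A/P$, $A/P^2$, $A/W$ of Proposition \ref{propesti}; a separate study of the sign of $G=P^2A-96W$ then eliminates the remaining vertices on the lines $y=H$ and $x=1$. None of that machinery is replaced by the deformations you cite, so as written the proposal does not close the proof.
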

\begin{corollary}\label{corAH}
The Blaschke-Santal\'o diagram $\mathcal{D}$ coincides on its lower boundary $L^-(x)$
with the line $y=\pi^2 x/12$ for $x\in (0, 3/\pi]$.
\end{corollary}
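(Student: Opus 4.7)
The plan is to reduce the corollary to Theorem \ref{theoF} via a simple algebraic reformulation, and then verify that rhombi sweep out the advertised portion of the lower boundary.

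First, since $x(\Omega)$ and $y(\Omega)$ are both positive on $\mathcal{A}$ (Proposition \ref{bounds}), the ratio is well defined and a direct computation from the definitions gives
\[
\frac{y(\Omega)}{x(\Omega)} = \frac{4\pi A(\Omega)/P^2(\Omega)}{A^2(\Omega)/(2\pi W(\Omega))} = \frac{8\pi^2}{F(\Omega)}.
\]
Thus minimizing $y/x$ over $\mathcal{A}$ amounts to maximizing $F$ over $\mathcal{A}$. By Theorem \ref{theoF} the maximizers are rhombi, and substituting the values $A(S_\ell) = 2\ell$, $P(S_\ell) = 4\sqrt{1+\ell^2}$, $W(S_\ell) = (\ell+\ell^3)/3$ from Proposition \ref{closure} gives $F(S_\ell) = 96$ for every $\ell>0$. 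Consequently $F \leq 96$ on $\mathcal{A}$, equivalent to $y(\Omega) \geq (\pi^2/12)\, x(\Omega)$ for every admissible $\Omega$. This yields the lower bound $L^-(x) \geq \pi^2 x/12$ on the whole of $(0,1]$.

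To prove the matching upper bound on $(0, 3/\pi]$ I would use the rhombi themselves. Proposition \ref{closure} gives
\[
x(S_\ell) = \frac{6\ell}{\pi(1+\ell^2)}, \qquad y(S_\ell) = \frac{\pi \ell}{2(1+\ell^2)},
\]
and one checks directly that $y(S_\ell) = (\pi^2/12)\, x(S_\ell)$, so every rhombus lies on the line. The map $\ell \mapsto x(S_\ell)$ is continuous and strictly increasing on $(0,1]$ and sweeps $(0, 3/\pi]$, the endpoint $3/\pi$ being attained at the square $\ell = 1$. Hence for every $x \in (0, 3/\pi]$ there is a rhombus realizing $y = \pi^2 x/12$, giving $L^-(x) \leq \pi^2 x/12$ on that interval. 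Combining the two inequalities proves the claim.

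The substantive work has already been done in Theorem \ref{theoF}, so there is no real obstacle here: the argument consists of an algebraic identity linking $y/x$ and $F$, plus a short parametrization check. The restriction $x \leq 3/\pi$ is intrinsic, since $x(S_\ell)$ cannot exceed $3/\pi$; for $x > 3/\pi$ no rhombus sits on the line $y = \pi^2 x/12$, and one expects $L^-(x)$ to be strictly larger than $\pi^2 x/12$ in that regime (consistent with the strict monotonicity of $L^-$ established in Proposition \ref{onL-}).
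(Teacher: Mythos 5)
Your proof is correct and follows essentially the same route the paper intends: the identity $y/x = 8\pi^2/F$ reduces the lower bound to Theorem \ref{theoF} (with $F\le 96$ and equality on rhombi), and the parametrization $x(S_\ell)=6\ell/(\pi(1+\ell^2))$ sweeping $(0,3/\pi]$ supplies the matching upper bound. Nothing further is needed.
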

Existence of a maximizer for $F$ is straightforward, using the Blaschke selection theorem and the continuity
of the geometric quantities involved for the Hausdorff convergence.
The strategy we use to prove Theorem \ref{theoF} is the following:
\begin{enumerate}
\item First we prove, using the same kind of arguments as in Theorem \ref{jimmy} that the maximizer is a polygon.
\item Then, we want to exclude vertices of the polygon that are in the interior of the triangle $T$
defined by the three points $(1,0);(1,H);(0,H)$.
For that purpose, we will not use a first order argument (using the first order optimality conditions), but a
second order argument (using the second order optimality conditions). Assuming that there is a vertex $(x_i,y_i)$
in the interior of the triangle $T$, we can move in both directions the coordinates $x_i,y_i$. We write the Hessian matrix
$\mathcal{H}_F$ of $F$ with respect to $x_i,y_i$ and we want to prove that this Hessian matrix is not negative.
\item Using an affine change of variables, we are able to consider this Hessian matrix for a triplet of points like
$(0,H);(t,uH);(1,0)$ and we can even fix the value of $H$.
\item At some point, we need to estimate global quantities like $A/P, A/P^2, A/W$ for the optimal domain.
This leads to (simpler) extremum problems that we solve analogously.
\item We conclude that there are no vertices inside the triangle by proving that the Hessian matrix is not negative. Then it remains to consider the cases of a vertex on the boundary of the triangle and eventually we are led 
just to compare the rhombus and the rectangle.
\end{enumerate}

Without loss of generality, we consider convex sets with projections $1$ and $H$ on the two axis of symmetry.
We can represent the profile of the convex set in the first quadrant by a concave function $h$ defined on $[0,1]$
with $h(0)=H$ and $h(1)=0$ and $h^\prime(0)\leq 0$.
As already mentioned, the existence of a maximizer follows by the direct methods of the calculus of variations, either working with the family of concave functions $h$ or with the convex shapes . 

{\bf 1st step}: To prove that the optimizers are polygons, we will follow the idea developed in \cite{LN}, \cite{LNP}. For that purpose,
let us write the optimality conditions in terms of $h$ and the variation $v$. First we recall the expressions of $A,P,W$ at $h$:
$$
A(h) = \int_0^1 h, \quad 
P(h)  = \int_0^1 \sqrt{1+(h')^2},
\quad 
W(h) = \int_0^1 \left(h x^2 + \frac{h^3}{3 }\right).
$$
Note that we get the total area, perimeter or moment of inertia by multiplying by 4.
A direct computation gives
\begin{align*}
\langle A'(h), v\rangle  = \int_0^1 v , \quad  
\langle P'(h), v \rangle  = \int_0^1 \frac{h' v'}{\sqrt{1+(h')^2}}, \quad 
\langle W'(h), v\rangle  = \int_0^1(h^2+x^2)v, 
\end{align*}
and
\begin{align*}
\langle A''(h) v, v\rangle  =0, \quad 
\langle P''(h) v, v\rangle  = \frac12 \int_0^1 \frac{(v')^2}{(1+(h')^2)^{3/2}}, \quad 
\langle W''(h) v ,v \rangle  = 2\int_0^1hv^2.
\end{align*}
Now, the first and second order optimality conditions (namely $F'=0$ and $F''\leq 0$) read
\begin{align}
    P' & = \frac{P}{2}\left( \frac{W'}{W}-\frac{A'}{A}\right), \notag
\\
 P'' & \leq  - \frac{P}{2A} A'' +
 \frac{ P}{2 W} W''
+  \frac{P}{A^2} (A')^2 .\label{stima}
\end{align}
Using in \eqref{stima} the expressions of $A$,$P$,$W$, $A'$, $P'$, $W'$, $A''$, $P''$, $W''$, we obtain
$$
 \frac12 \int_0^1 \frac{(v')^2}{(1+(h')^2)^{3/2}} \leq 
 \frac{ P}{W} \int_0^1hv^2
+  \frac{P}{A^2} \left( \int_0^1 v\right)^2. 
$$
Let $I_v$ denote an interval containing the support of $v$: using the Cauchy-Schwartz inequality, we get
$$
\frac12 \int_0^1 \frac{(v')^2}{(1+(h')^2)^{3/2}} \leq \left[
 \frac{ P}{W}\, \|h\|_{L^\infty(I_v)} +  \frac{P}{A^2}\,
 |I_v|^2 \right]  \|v \|_{L^2(I_v)}^2.
$$
This estimate is true for every admissible $v$ with the properties above. This will be crucial for the proof that $h$ is polygonal.

In order to prove the polygonal structure, we follow \cite{LN}.  We want to prove that the support of the measure 
$h''$ is discrete. Assuming, this is not the case: it contains an accumulation point $x_0$ and, 
for any $\varepsilon_n>0$ (or $\varepsilon_n<0$)
we  can find at least four point $x_1^n< x_2^n<x_3^n <x_4^n$ in the interval $[x_0,x_0+\varepsilon_n]$
such that the support of $h''$ satisfies
$$supp^t(h'')\ \cap (x_j^n,x_{j+1}^n) \not= \emptyset,\quad \mbox{ for }  j=1,2,3.$$
Then we construct three functions $v_{n,i}$ with support in $[x_0,x_0+\varepsilon_n]$ in the following way: 
$v_{n,i}$ solves the EDO : $v''_{n,i}=\chi_{(x_i^n,xi_{i+1}^n)}.h''$ and $v_{n,i}=0$ in $(0,\varepsilon_n)^c$, $i=1,3$. \\
Now, we choose three constants $\alpha_1, \alpha_2, \alpha_3$ such that the function $v_n$ defined as
$$v_n = \sum_{i=1}^3 \alpha_i v_{n,i}$$
satisfies $v^\prime_n(x_0)=v^\prime_n(x_0+\varepsilon_n)=0$ and then $v_n$ (extended by zero) has its support
in $[x_0,x_0+\varepsilon_n]$ and satisfying $v_n''= \sum_{i=1}^3 \alpha_i \chi_{(x_i^n,xi_{i+1}^n)}.h''$
is admissible as a perturbation of the optimum $h$.

Then we use the previous inequality, with $I_v=[x_0,x_0+\varepsilon_n]$:
$$
\frac12 \int_0^1 \frac{(v_n')^2}{(1+(h')^2)^{3/2}} \leq \left[
 \frac{ P}{W}\, \|h\|_{L^\infty(I_v)} +  \frac{P}{A^2}\,
 |I_v|^2 \right]  \|v_n \|_{L^2(I_v)}^2,
$$
and we can assume that there exist $0 \leq C < +\infty $ such as $|h'|\leq C$ on $I_v$.
\\
So we have 
$$
\frac{1}{2(1+C^2)^{\frac32}} \|v_n' \|_{L^2(I_v)}^2\leq \left[
 \frac{ P}{W}\, \|h\|_{L^\infty(I_v)} +  \frac{P}{A^2}\,
 |I_v|^2 \right]  \|v_n \|_{L^2(I_v)}^2.
$$
By using the Poincaré's inequality on $I_v$, we have:
$$
\frac{\pi^2}{\varepsilon_n^2} \leq 2(1+C^2)^{\frac32} \left[\frac{P}{W}\, \|h\|_{L^\infty(I_v)} +  \frac{P}{A^2}\,
 |I_v|^2 \right]
$$
This gives us the contradiction since the left part tends to $+\infty$ when $n$ increases and the right part is
bounded. 

\begin{remark}
Note that the sole admissible $v$ for $h$ linear is $v\equiv 0$. Therefore the optimality conditions are (trivially) satisfied.
\end{remark}

In view of the previous proposition, we infer that a maximizer $h$ satisfies either $h''\equiv 0$ or $h''= \sum_{i=1}^k \alpha_i \delta_{x_i}$ in the open set $(0,1)$ for a finite family of $\alpha_i\in \mathbb R$ and $x_i \in (0,1)$.
Our purpose is now to exclude this second case.

\medskip\noindent
{\bf 2nd step}
Let us assume that the optimal polygon has a "free" vertex $M_1=(x_1,y_1)$. By free, we mean that we can move infinitesimally $x_1$ and $y_1$ in any direction keeping an admissible (convex) polygon. Taking the first interior vertex, we can assume that his two neighbouring vertices
are $M_0=(0,H)$ and $M_2=(x_2,y_2)$. 
To compute the successive derivatives $\partial F/\partial x_1; \partial F/\partial y_1; \partial^2 F/\partial x_1^2;
\partial^2 F/\partial x_1 \partial x_2; \partial^2 F/\partial x_2^2$, we just need to look at the contribution of $x_1,y_1$
in the global expression of $P,A,W$. Let us denote by $\widehat{P}, \widehat{A}, \widehat{W}$ the remaining parts not
depending on $x_1,y_1$. We have
$$
\begin{array}{l}
P=\widehat{P} + 4\left(x_1^2+(y_1-H)^2\right)^{1/2} + 4\left((x_2-x_1)^2+(y_2-y_1)^2\right)^{1/2} \\
A=\widehat{A} + 2(x_1(H-y_2) + x_2(y_1+y_2)) \\
W=\widehat{W}+4(I_1+J_1+I_2+J_2)
\end{array}
$$
where, denoting by $h_1(x), h_2(x)$ the expression of $h(x)$ on the first and the second interval, namely
$$
\begin{array}{c}
h_1(x)=\frac{y_1-H}{x_1}\,x+H,\quad x\in [0,x_1] \\
h_2(x)=\frac{y_2-y_1}{x_2-x_1}\,(x-x_2)+y_2,\quad x\in [x_1, x_2] \\
\end{array}
$$
we have
$$
\begin{array}{l}
I_1=\int_0^{x_1} x^2 h_1(x) = \frac{1}{4} y_1 x_1^3 +\frac{1}{12} x_1^3 H \\
I_2=\int_{x_1}^{x_2} x^2 h_2(x) = \frac{1}{4} (y_2 x_2^3 -y_1 x_1^3) -\frac{1}{12} (x_1^2 +x_1x_2 +X_2^2) (y_2x_1-y_1x_2)\\
J_1= \int_0^{x_1} \frac{1}{3} h_1^3 (x)= \frac{1}{12} x_1 (y_1^3+y_1^2 H+y_1 H^2 +H^3) \\
J_2= \int_{x_1}^{x_2} \frac{1}{3} h_2^3 (x)= \frac{1}{12} (x_2-x_1) (y_1^3+y_1^2 y_2+y_1 y_2^2 +y_2^3)
\end{array}
$$
Using the previous formulas, we can compute the first and second derivatives of $P,A,W$ 
with respect to $x_1,y_1$ and therefore, the derivatives of $F=P^2A/W$.
Now, the first order optimality condition for $F$ writes
$$\frac{1}{W}\frac{\partial W}{\partial x_1}=\frac{2}{P}\frac{\partial P}{\partial x_1} + \frac{1}{A}\frac{\partial A}{\partial x_1}$$
and the same for the derivative in $y_1$. We use these relations to simplify the computations of the second derivative.
For example, $\partial^2 F/\partial x_1^2$ can be written
$$\frac{\partial^2 F}{\partial x_1^2}=F\left(\frac{2}{P}\frac{\partial^2 P}{\partial x_1^2} + 
\frac{2}{P^2}\left(\frac{\partial P}{\partial x_1}\right)^2 + \frac{4}{AP}\frac{\partial P}{\partial x_1} \frac{\partial A}{\partial x_1} - \frac{1}{W}\frac{\partial^2 W}{\partial x_1^2}
\right)$$
(we use here the fact that the second derivative $\partial^2 A/\partial x_1^2$ vanishes). Similarly for the other derivatives.

Let us give the final expression of the Hessian matrix after a straightforward computation.
For sake of simplicity, we introduce the two angles $\theta_1,\theta_2$ that the segments $M_0M_1$ and $M_1M_2$ make
with the horizontal, namely
$$\theta_1=\arctan\left(\frac{H-y_1}{x_1}\right), \quad \theta_2=\arctan\left(\frac{y_1-y_2}{x_2-x_1}\right),$$
and the relative coordinates
$$t=\frac{x_1}{x_2}, \quad u=\frac{y_1-y_2}{H-y_2}.$$
We get
\begin{align*}
A \frac{\partial^2 F}{\partial x_1^2} = & \frac{8A}{P}\left(\frac{\sin^2\theta_1}{M_0M_1} + \frac{\sin^2\theta_2}{M_1M_2}\right) +\frac{32 A}{P^2} (\cos\theta_1 - \cos\theta_2)^2 \\
& +  \frac{32 (H-y_2)}{P} (\cos\theta_1 - \cos\theta_2)
-\frac{2 A (H-y_2)}{3W } (3x_1+x_2 u)
\\
A \frac{\partial^2 F}{\partial x_1\partial y_1} =& \frac{8A x_2(H-y_2)}{P}
\left(\frac{t(1-u)}{M_0M_1^3} + \frac{u(1-t)}{M_1M_2^3}\right) \\
& + \frac{32 A}{P^2} (\cos\theta_1 - \cos\theta_2) (\sin\theta_2 - \sin\theta_1) \\
& + \frac{16}{P} \left(x_2(\cos\theta_1 - \cos\theta_2)+(H-y_2) (\sin\theta_2 - \sin\theta_1)\right)\\
& - \frac{A}{3W } [x_2(x_2+2x_1)+(H-y_2)(H+y_2+2y_1)]
\\
A \frac{\partial^2 F}{\partial y_1^2} =& \frac{8A}{P}\left(\frac{\cos^2\theta_1}{M_0M_1} + \frac{\cos^2\theta_2}{M_1M_2}\right) +\frac{32 A}{P^2} (\sin\theta_2 - \sin\theta_1)^2 \\
& + \frac{32 x_2}{P} (\sin\theta_2 - \sin\theta_1)
-\frac{2 A x_2}{3W } \left[4H-(H-y_2)((1-t)+3(1-u))\right].
\end{align*}
Now, let us consider the following affine transformation given by  the change of variable
$$
x'=\frac{x}{x_2},\qquad y'=\frac{y-y_2}{4(H-y_2)}.
$$
It has the effect of transforming the point $M_0$ in $N_0=(0,\frac{1}{4})$, the point $M_2$ in $N_2=(1,0)$ and the point $M_1$ in $N_1=(x'_1,y'_1)$
in such a way that these three points remain in a "convex" position
(we can assume $H>y_2$,  otherwise $M_1$ would not be a vertex).
Moreover, when computing the new Hessian matrix inherited with this change of variable, we see that it has the same properties of the original Hessian matrix,
for example the determinants of the two Hessian matrices are equal up to the positive factor $1/16 x_2^2 (H-y_2)^2$. Therefore, we can restrict
to this particular situation with the three points $((0,\frac{1}{4}); (t,u/4); (1,0)$ for which the above formula simplifies. In particular, we will be interested in the following
quantity $(1,-1) \mathcal{H}_F (1,-1)^T$ or
\begin{equation}\label{quantD}
\mathcal{E}:= \frac{\partial^2 F}{\partial x_1^2} +  \frac{\partial^2 F}{\partial y_1^2} -2  \frac{\partial^2 F}{\partial x_1 \partial y_1}
\end{equation}
together with the trace of the Hessian matrix
$$
\mathcal{T}:= \frac{\partial^2 F}{\partial x_1^2} +  \frac{\partial^2 F}{\partial y_1^2} .
$$

\medskip\noindent
{\bf 3rd step}
In the second derivatives of the functional $F$, appear some global quantities involving $A,P,W$. In order to be able to prove that  the quantity
$\mathcal{E}$ is positive, we need to estimate $A,P,W$ and some ratios. This is the aim of the following proposition.
Without loss of generality, we will use the following normalization: we work in the subclass:
$$\mathcal{A}_0=\{\Omega \in \mathcal{A},  h(0)=H, h(1)=0\}$$
where we assume $H\leq 1$.
\begin{proposition}\label{propesti}
Let $\Omega$ in $\mathcal{A}_0$, then we have the following inequalities
$$A(\Omega)\geq 2H, \quad P(\Omega) \leq 4(1+H),\quad \frac{A(\Omega)}{P(\Omega)}\,\geq \frac{H}{2\sqrt{1+H^2}}$$
$$\frac{A(\Omega)}{P^2(\Omega)}\,\geq \frac{H}{8(1+H^2)},\quad \frac{A(\Omega)}{W(\Omega)}\,\leq \frac{6}{1+H^2}\,.$$
\end{proposition}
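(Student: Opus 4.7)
The plan is to establish (a)--(e) in two stages. The first two will follow from elementary geometric comparisons, while the last three will require identifying the rhombus as an extremizer.

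For (a) and (b), I will use the inclusions $S_H\subseteq\Omega\subseteq R_H$, where $S_H$ is the rhombus with vertices $(\pm 1,0)$ and $(0,\pm H)$, and $R_H=[-1,1]\times[-H,H]$. The first holds because $\Omega$ is convex with boundary passing through the four vertices of $S_H$ (by $h(0)=H$ and $h(1)=0$); the second follows from $h(x)\leq H$ on $[0,1]$. Then $A(\Omega)\geq A(S_H)=2H$ by monotonicity of area, and $P(\Omega)\leq P(R_H)=4(1+H)$ by monotonicity of perimeter among convex bodies.

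For (c), (d), (e), a direct computation on $h_0(x)=H(1-x)$ gives $A(S_H)=2H$, $P(S_H)=4\sqrt{1+H^2}$, and $W(S_H)=H(1+H^2)/3$, so the rhombus saturates each bound. Thus the three inequalities amount to saying that $S_H$ minimizes $A/P$ and $A/P^2$ and maximizes $A/W$ within $\mathcal{A}_0$. The strategy is to write $h=h_0+v$ with $v$ concave and $v(0)=v(1)=0$, whence $v\geq 0$ (the decomposition is forced by concavity of $h$), reducing each inequality to $\Phi[h_0+v]\geq\Phi[h_0]=0$. For (e), an explicit expansion yields
\[
\Phi_e[h_0+v]=4\int_0^1 v\,g_1\,dx+8\int_0^1 v^2\bigl(3h_0+v\bigr)\,dx,\qquad g_1(x)=6(1+H^2)x^2-12H^2 x+(5H^2-1).
\]
The second integral is non-negative since $v,h_0\geq 0$. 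For the first, using the representation $v(x)=\int_0^1 G(x,y)\,d\mu(y)$ with $-v''=\mu\geq 0$ and $G$ the Dirichlet Green's function of $-u''$ on $(0,1)$, Fubini reduces $\int v\,g_1\geq 0$ to the pointwise non-negativity on $[0,1]$ of the unique solution $\tilde g_1$ of $-\tilde g_1''=g_1$, $\tilde g_1(0)=\tilde g_1(1)=0$. Direct integration gives $\tilde g_1(y)=\tfrac12 y(1-y)R(y)$ with $R(y)=(1+H^2)y^2+(1-3H^2)y+2H^2$, and a short analysis of this quadratic (using $R(0)=2H^2\geq 0$, $R(1)=2$, and bounding its minimum when the critical point lies in $[0,1]$) confirms $R\geq 0$ on $[0,1]$ for every $H\in(0,1]$.

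The same scheme will apply to (c) and (d), replacing $g_1$ by the first-variation polynomial attached to the corresponding $\Phi$. The hard part will be controlling the higher-order remainders, since the perimeter functional $\int\sqrt{1+(h')^2}$ is not polynomial in $v$: a naive Taylor expansion of $\sqrt{(1+H^2)-2Hv'+(v')^2}$ diverges when $v'$ is large (as is admissible, since profiles approaching the rectangle belong to $\mathcal{A}_0$), so a global argument -- either an exact identity exploiting $\int v'=0$ or a geometric comparison between the convex arc and its chord -- will be required. A naive combination of (a) and (b) with Cauchy--Schwarz is insufficient, as it yields, e.g., $A/P\geq H/(2(1+H))<H/(2\sqrt{1+H^2})$, so the concavity constraint on $v$ must be used in an essential way.
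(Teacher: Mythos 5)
Your treatment of the first two inequalities (rhombus-inside/rectangle-outside) matches the paper's. Your argument for $A/W\le 6/(1+H^2)$ is correct and is a genuinely different, more self-contained route than the paper's: writing $h=h_0+v$ with $v\ge 0$ concave, the cubic nonlinearity of $W$ makes the expansion \emph{exact}, the second-order term $8\int v^2(3h_0+v)$ is manifestly nonnegative, and the first-order term is handled by pushing $g_1$ through the Dirichlet Green's function and checking $\tilde g_1=\tfrac12 y(1-y)R(y)\ge 0$ (your $R$ is correct, and $R>0$ on $[0,1]$ since its real roots, when they exist for small $H$, are both negative). This avoids entirely the machinery the paper uses for the same inequality (existence of an optimizer, the multiplier $\xi$ with $-\frac{W}{A}\xi''=h^2+x^2-\frac WA$, the arc-of-circle analysis, and the final rhombus-vs-rectangle comparison). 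One small technical point you should make explicit: for $v$ concave with a vertical tangent at an endpoint, $\mu=-v''$ has infinite total mass, so the Fubini step needs the standard integrability $\int y(1-y)\,d\mu(y)<\infty$ (which holds, and suffices since $|\tilde g_1(y)|\lesssim y(1-y)$).

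However, the proposal does not prove the two perimeter inequalities $A/P\ge H/(2\sqrt{1+H^2})$ and $A/P^2\ge H/(8(1+H^2))$: you correctly observe that the crude combination of $A\ge 2H$ and $P\le 4(1+H)$ is strictly weaker, and that your expansion scheme breaks down because $\int\sqrt{1+(h')^2}$ is not polynomial in $v$ and $v'$ is unbounded near $x=1$, but you then only announce that ``a global argument will be required'' without supplying it. This is a genuine gap — two of the five claimed inequalities are unproved, and the obstruction you identify is real (the needed direction is an \emph{upper} bound on $P$ relative to $A$, so the convexity/tangent-line inequality $\sqrt{1+(H-t)^2}\ge\sqrt{1+H^2}-\tfrac{H}{\sqrt{1+H^2}}t$, which would exploit $\int_0^1 v'=0$, goes the wrong way). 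The paper closes exactly this point by a different mechanism: it first shows the minimizer of $A/P$ (resp.\ $A/P^2$) over $\mathcal A_0$ is a polygon, then uses the first-order optimality condition to cancel terms in the second derivative with respect to a free vertex coordinate, obtaining $\partial^2(A/P)/\partial x_1^2=-\frac1P\,\partial^2P/\partial x_1^2<0$, which contradicts minimality at an interior vertex; this reduces the problem to comparing the rhombus and the rectangle, where the rhombus wins. You would need either to import that argument or to find the ``exact identity / chord comparison'' you allude to; as written, the proof is incomplete.
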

\begin{proof}
The two first inequalities come immediately from the fact that $\Omega$ contains the rhombus
of vertices  $(\pm 1,0);(0,\pm H)$ and is contained in the rectangle $(-1,1)\times (-H,H)$.
To prove the three other inequalities, we solve  a shape optimization problem in the class $\mathcal{A}_0$.
Existence of an optimal domain is immediate each time.

\medskip 
{\it Minimizing $A/P$.} Let $\Omega^*$ be a minimizer. Working exactly as the beginning of the proof of Theorem \ref{theoF}, we infer that 
$\Omega^*$ is a polygon. Let us assume that $\Omega^*$ contains a free vertex (namely a vertex $(x_1,y_1)$ with $0<x_1<1, 1-x_1<y_1<H$).
Let us write the optimality conditions we obtain by moving this vertex.
On the one-hand, the first optimality condition in $x_1$ can be written
$$\frac{1}{A}\,\frac{\partial A}{\partial x_1 }\,- \frac{1}{P}\,\frac{\partial P}{\partial x_1 } =0.$$
Let us now compute the second derivative in $x_1$. Taking into account that $\frac{\partial^2 A}{\partial x_1 ^2}\ =0$, we get
$$\frac{\partial^2}{\partial x_1^2} \left(\frac{A}{P}\right) = - \frac{1}{A^2}\left(\frac{\partial A}{\partial x_1 }\right)^2 + 
 \frac{1}{P^2}\left(\frac{\partial P}{\partial x_1 }\right)^2 -  \frac{1}{P}\,\frac{\partial^2 P}{\partial x_1^2}\,.$$
 Now, using the first order optimality condition, the two first terms cancel out and we get
$$\frac{\partial^2}{\partial x_1^2} \left(\frac{A}{P}\right) = -  \frac{1}{P}\,\frac{\partial^2 P}{\partial x_1^2}\,=-\frac{(y_1-y_0)^2}{A_0A_1^3}
-\frac{(y_1-y_2)^2}{A_2A_1^3} $$
the second derivative in $x_1$ being strictly negative, we get a contradiction with the minimality. Therefore, $\Omega^*$ cannot have a free vertex and
the only possibilities that remain to be considered are
\begin{itemize}
\item the rhombus
\item the rectangle
\item another vertex on the horizontal line $y=H$
\item or/and another vertex on the vertical line $x=1$.
\end{itemize}
Actually in the two last cases, we can move freely the vertex along the horizontal (or the vertical) line, therefore the previous computation with the
variable $x_1$ (or the variable $y_1$) still holds and leads to a similar contradiction. Thus it remains only to compare the rectangle and the rhombus
for which the ratio $A/P$ equals respectively $H/(1+H)$ and $H/2\sqrt{1+H^2}$ and a direct comparison shows that the rhombus gives the lower value.

\medskip
{\it Minimizing $A/P^2$.} We proceed exactly in the same way.
We get for the second derivative in $x_1$:
$$\frac{\partial^2}{\partial x_1^2} \left(\frac{A}{P^2}\right) = - \frac{1}{2A^2}\left(\frac{\partial A}{\partial x_1 }\right)^2 
- \frac{2}{P}\,\frac{\partial^2 P}{\partial x_1^2}$$
and the right-hand side being negative, we conclude in the same way that we just have to compare the rhombus and the rectangle.
For the rhombus, the ratio $A/P^2$ is $H/8(1+H^2)$ while for the rectangle, it is equal to $H/4(1+H)^2$ 
and the result follows from the comparison of these two numbers.

\medskip
{\it Maximizing $A/W$ or minimizing $W/A$.} This case is more complicated.  The fact that the perimeter is
not in the functional makes not clear whether the maximizer is a polygon.
Let us write the optimality condition, using the formalism developed in \cite{LN} to take into account
the concavity constraint on the function $h$ describing the boundary of the optimal set.
Since the derivative of the area and the moment of inertia are
$$
\langle A'(h),v\rangle=\int_0^1 v \mathrm{d}x \qquad \langle W'(h),v\rangle=\int_0^1 (x^2 +h^2)v \mathrm{d}x$$
the first order optimality condition writes: there exists a function $\xi$ in $H^1(0,1)$, $\xi \geq 0$,
$\xi=0$ on $S$ the support of the measure $h^{\prime\prime}$ such that
\begin{equation}\label{optxi}
- \frac{W}{A}\, \xi^{\prime\prime}= h^2+x^2 - \frac{W}{A}\,.
\end{equation}
The right-hand side of \eqref{optxi} being continuous, we see that the function $\xi$ is indeed in $C^2(0,1)$.
Now, the support $S$ is closed: let us assume that there exists a (maximal) open interval $(\alpha,\beta)$ in its
complement with $0<\alpha<\beta<1$.  On this interval, we have
$$- \frac{W}{A}\, \xi^{\prime\prime}= h^2+x^2 - \frac{W}{A} ,\;x\in (\alpha,\beta)$$
with the boundary conditions $\xi(\alpha)=\xi(\beta)=0$. Moreover, since $\xi$ is $C^2$ and $\xi \geq 0$,
we must have also $\xi^\prime(\alpha)=\xi^\prime(\beta)=0$.  Now, since $h^{\prime\prime}=0$ on $(\alpha,\beta)$
we see that $h$ is affine on this interval and therefore the right-hand side of the equation \eqref{optxi} is
a polynomial of degree 2. Taking into account the boundary conditions, this implies that
$\xi(x)=T(x-\alpha)^2(x-\beta)^2$ on $(\alpha,\beta)$.  Now, coming back to the equation, we see that the term
in $x^2$ of $\xi^{\prime\prime}$ should be, on the one hand, equal to $12T$ and, on the other hand, negative (because it is positive for $h^2+x^2$). The negativity of $T$ is in contradiction with $\xi \geq 0$.
In conclusion, the complement of $S$ cannot have an internal open interval. In other words, we are led to
the following possibilities
$$S^c=(0,\alpha) \cup (\beta,1)\quad \mbox{ with $0\leq \alpha \leq \beta \leq 1 $}.$$
Let us start with the case $ 0< \alpha < \beta < 1 $ that means that $h$ is affine on the intervals
$[0,\alpha] \cup [\beta,1]$ and is strictly convex for $x\in (\alpha,\beta)$.  On this strictly convex part,
since here $\xi=0$, we have $h^2+x^2=W/A$: so the boundary is an arc of circle of radius $R=\sqrt{W/A}$.
We parametrize the boundary with two angles $\theta_1,\theta_2$ such that (see Figure)
\begin{center}
\begin{figure}[h]
\includegraphics[scale=0.2]{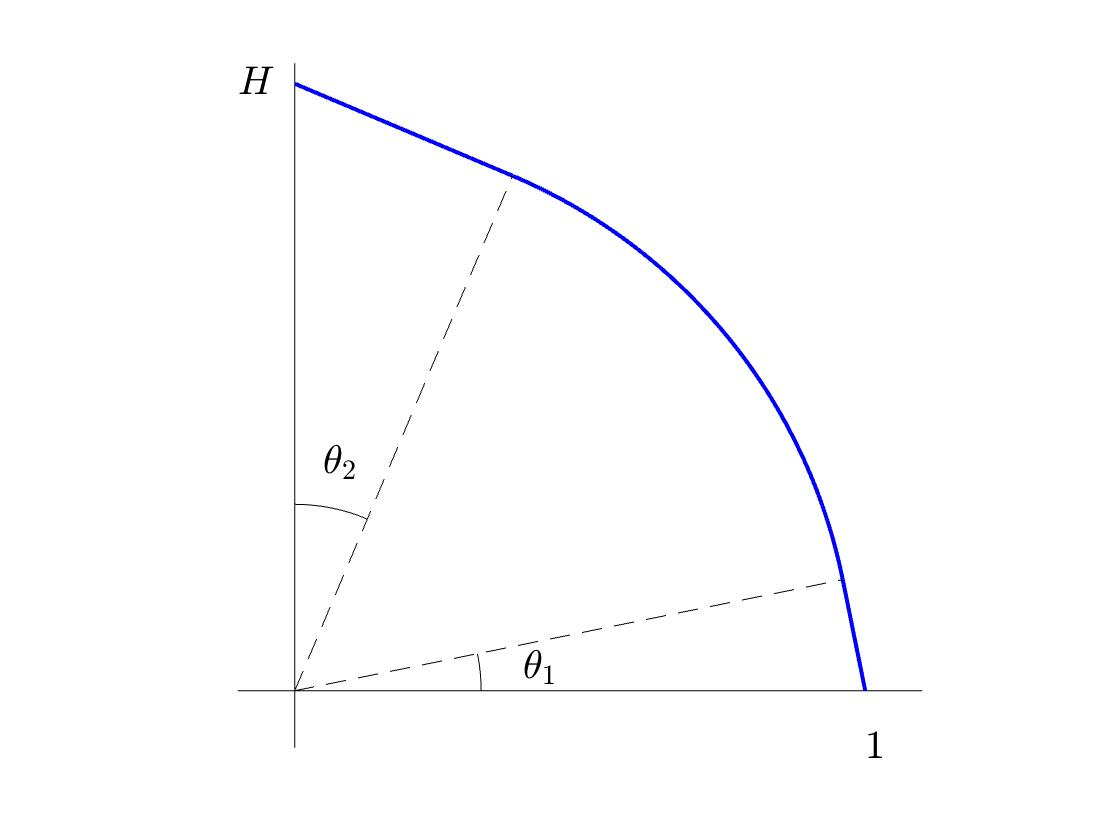}
\end{figure}
\end{center}

\begin{itemize}
\item the first segment on $[0,\alpha]$ is given in polar coordinates as $\rho=R/\sin(\theta+\theta_2)$, 
$\theta\in [\pi/2-\theta_2,\pi/2]$,
\item the arc of circle is given by $\rho=R, \theta \in [\theta_1, \pi/2-\theta_2]$,
\item the second segment on $[\beta,1]$ is given in polar coordinates as $\rho=R/\cos(\theta-\theta_1)$, 
$\theta\in [0,\theta_1]$.
\end{itemize}
We also have the relations $R=\sqrt{\frac{W}{A}}=\cos\theta_1$ and $H=R/\cos\theta_2$.
Using the expressions in polar coordinates, we immediately get
$$A=2R^2\left(\tan\theta_1 + \frac{\pi}{2} -\theta_2-\theta_1 + \tan\theta_2\right),$$
$$W=R^4\left(\int_0^{\theta_1}\frac{dt}{\cos^4(t)} +\frac{\pi}{2} -\theta_2-\theta_1  +
\int_0^{\theta_2}\frac{dt}{\cos^4(t) }\right).$$
If $0<\theta_1$ and $\theta_1+\theta_2 < \pi/2$, we can differentiate with respect to $\theta_1$ and 
we should get 0 for this derivative.
Now
$$\frac{\partial}{\partial \theta_1}\left(\frac{W}{A}\right)=\frac{R^4}{A}\left(\frac{1}{\cos^4\theta_1}-1\right)
-\frac{2WR^2}{A^2}\left(\frac{1}{\cos^2\theta_1}-1\right)$$
Using $\cos\theta_1=R$ and $R^2=W/A$, this derivative can be written
$$\frac{\partial}{\partial \theta_1}\left(\frac{W}{A}\right)=\frac{(1-R^2)^2}{A}.$$
But if $R$ would be equal to 1, we would have $\theta_1=0$ that is not the assumption. This computation shows that
the case $0<\theta_1$ and $\theta_1+\theta_2 < \pi/2$ cannot occur. We can prove exactly in the same way that 
$0<\theta_2$ and $\theta_1+\theta_2 < \pi/2$ cannot occur.  Therefore, it remains only two cases:
\begin{itemize}
\item either $\theta_1=0$ and $\theta_2=0$ that corresponds to the unit disk for which $W/A=R^2/2$,
\item or $\theta_1+\theta_2 = \pi/2$ that corresponds to a polygon with one interior vertex, say $(x_1,y_1)$.
\end{itemize}
In that case, it is more convenient to express all the quantities in the cartesian coordinates.
Let us introduce $t=x_1$ and $u=y_1/H$ both in the interval $[0,1]$ The concavity assumption is
$t+u\geq 1$. With these variable, we have
$$A=2H(t+u)$$
$$W=\frac{H}{3}\left[t^3 +(1+t+t^2)u+H^2(t(1+u+u^2)+u^3)\right].
$$
As previously, we want to prove that the vertex $(x_1,y_1)$ cannot be "free".
This leads us  to study the function with two variables $t,u$
$$\psi(t,u)=\frac{t^3 +(1+t+t^2)u+H^2(t(1+u+u^2)+u^3)}{t+u},$$
and prove that it cannot have a minimizer in the open set $\{0<t<1, 1-t<u<1\}$. Let us compute the derivative
of $\psi$ with respect to $u$:
$$(t+u)^2 \frac{\partial \psi}{\partial u}=t+t^2+H^2\left(t(4u^2-1)+t^2(1+2u)+2u^3\right).$$
Since $t+H^2t(4u^2-1)>t(1-H^2)\geq 0$, we see that the right-hand side is positive and cannot vanish.
This means that $u$ has to be equal either to $1-t$, that corresponds to a rhombus, or $u=1$.
In this last case, we compute the derivative $\partial \psi/\partial t$ and we get for $u=1$
$$(t+1)^2 \frac{\partial \psi}{\partial t}(t,1)= 2t^3+4t^2+2t+2H^2$$
that is positive, too. Therefore, when $u=1$, the only new case is $t=1$ that corresponds to a rectangle.
We conclude by comparing $W/A$ for the rhombus (whose value is $(1+H^2)/6$) and for the rectangle (whose value is $(1+H^2)/3$). This gives the desired result.
\end{proof}

\medskip\noindent
{\bf 4th step: Conclusion}
We come back to the quantity $\mathcal{E}$ defined in \eqref{quantD} that we write with the three points
(we keep the notations $t,u$ introduced previously to denote the interior point)
$N_0=(0,H);N_1=(t,uH);N_2=(1,0)$ and we use the angles $\theta_1,\theta_2$, in particular to write
$$t=N_0N_1 \cos\theta_1,\  (1-u)H=N_0N_1 \sin\theta_1, \ 1-t=N_1N_2 \cos\theta_2, \ uH=
N_1N_2 \sin\theta_2.$$
We recall that $\theta_1\leq \theta\leq \theta_2$ by convexity.
The quantity $A\mathcal{E}$ is the sum of four terms that we denote $\mathcal{E}_1,\mathcal{E}_2,
\mathcal{E}_3,\mathcal{E}_4$:
$$\mathcal{E}_1=\frac{8A}{P}\left(\frac{1-\sin(2\theta_1)}{N_0N_1}\,+ \frac{1-\sin(2\theta_2)}{N_1N_2}\right).$$
For the second and third terms, it is convenient to introduce $\delta:=(\theta_2-\theta_1)/2$ and
$\mu:=(\theta_2+\theta_1)/2$. With these notations, we can write
$$\mathcal{E}_2=\frac{128A}{P^2}\,\sin^2\delta (\cos\mu-\sin\mu)^2$$
$$\mathcal{E}_3=\frac{64}{P}\,(1-H)\sin\delta (\cos\mu-\sin\mu)$$
and the last term is
$$\mathcal{E}_4=\frac{2A}{3W}\left(1+2t+H^2(1+2u)-4H(t+u)\right).$$
It is clear that $\mathcal{E}_1, \mathcal{E}_2$ are always positive. This is also the case
for $\mathcal{E}_3$ as soon as $\mu\leq \pi/4$.
Now, since $H$ has been supposed to be less than $1/2$ (we have chosen $H=1/4$ previously, the minimum
of $\mathcal{E}_4$ is certainly obtained by taking $t=0$ and $u=1$ (since $2-4H\geq 0$ and $2H^2-4H\leq 0$).
This means 
$$\mathcal{E}_4 \geq \frac{2A}{3W}\left(1+2 H^2-4H\right) >0 \quad \mbox{when } H<1-\frac{1}{\sqrt{2}}.$$
In conclusion $\mathcal{E}>0$ for $H=1/4$ and $\mu \leq \pi/4$.

It remains to consider the case $\mu \geq 1/4$ In that case, it is easier to work with the trace of the Hessian
matrix, namely
$$\mathcal{T}= \frac{\partial^2 F}{\partial x_1^2} +  \frac{\partial^2 F}{\partial y_1^2} \,.$$
It can be written
$$\mathcal{T}=\frac{8A}{P}\left(\frac{1}{N_0N_1}+\frac{1}{N_1N_2}\right)+\frac{128 A}{P^2}\sin^2\delta
+\frac{64}{P}\sin\delta (H\sin\mu +\cos\mu)-\frac{8A H}{3W} (t+u).$$
Here we have also four terms $\mathcal{T}_1,\mathcal{T}_2,\mathcal{T}_3,\mathcal{T}_4$
that we estimate separately. 
Taking into account $\mu \geq \pi/4$, we infer 
$$\frac{\pi}{4} \leq \frac{\theta_1+\theta_2}{2} \leq \frac{\theta+\theta_2}{2}$$
therefore $\theta_2\geq \frac{\pi}{2} -\theta$. This implies that the point $(t,uH)$ lies into the
triangle defined by the three points $(1,0);(1,H);(1-H\tan\theta,H)$. In particular,
$N_0N_1 \leq \sqrt{1+H^2}$, while $N_1N_2 \leq H/\cos\theta$. Using now $8A/P\geq 4H/\sqrt{1+H^2}$
coming from Proposition \ref{propesti}, we see that
$$\mathcal{T}_1=\frac{8A}{P}\left(\frac{1}{N_0N_1}+\frac{1}{N_1N_2}\right) \geq \frac{4(1+H)}{1+H^2}$$
where we used $\cos\theta=1/\sqrt{1+H^2}$.

We consider the second term. From $\theta_2\geq \pi/2 -\theta$, we deduce $\delta \geq \pi/4-\theta$, thus
$\sin\delta \geq (\cos\theta-\sin\theta)/\sqrt{2}$.  We use $A/P^2\geq H/(8(1+H^2)$ coming from Proposition \ref{propesti} to infer
$$\mathcal{T}_2=\frac{128 A}{P^2}\sin^2\delta \geq \frac{8H(1-H)^2}{(1+H^2)^2}.$$

For the third term, we use the identity $H\sin\mu +\cos\mu=\cos(\mu-\theta)/\cos\theta$ with the inequality
$\mu-\theta\leq \pi/4-\theta/2$ to have $\cos(\mu-\theta)\geq (\cos\frac{\theta}{2} +\sin\frac{\theta}{2})/\sqrt{2}
\geq 1/\sqrt{2}$, and we also use $1/P\geq 1/4(1+H)$ and $\sin\delta \geq (\cos\theta-\sin\theta)/\sqrt{2}$ to infer
$$\mathcal{T}_3=\frac{64}{P}\sin\delta (H\sin\mu +\cos\mu) \geq \frac{8(1-H)}{1+H}.$$

At last, we use $8A/3W \leq 16/(1+H^2)$ coming from Proposition \ref{propesti} and 
$t+u\leq 2$ to deduce that the fourth term is estimated by
$$\mathcal{T}_4=-\frac{8A H}{3W} (t+u)\geq -\frac{32 H}{1+H^2}.$$
Summing these four terms, we finally get that the trace of the Hessian matrix is estimated from below by
$$\mathcal{T}\geq \frac{4}{(1+H)(1+H^2)^2}\left(3-6H-4H^2-12H^3-3H^4-2H^5\right)$$
therefore $\mathcal{T}>0$ for $H=1/4$ yielding the desires contradiction.

We are led to the only following possibilities for the optimal domain
\begin{enumerate}
\item it is the rhombus
\item it is the rectangle
\item it has another vertex on the horizontal line $y=H$
\item or/and it has another vertex on the vertical line $x=1$.
\end{enumerate}
We are going to prove that we are necessarily in the two first cases.
 Let us assume that the optimal domain has two
vertices  $M_1=(t,H)$ and $M_2=(1,uH)$ with $(t,u)\in [0,1]^2$. In that case, we have
$$A=2H(1+t+u-tu), \quad P=4\left(t+uH+\sqrt{(1-t)^2+H^2(1-u)^2}\right)$$
$$W=\frac{H}{3}\left(4u+(1-u)(1+t+t^2+t^3)\right)+
\frac{H^3}{3}\left(4t+(1-t)(1+u+u^2+u^3)\right)$$
Let us introduce the quantity $G=P^2A-96W$ and we want to prove that $G\leq 0$ for any
$(t,u)\in [0,1]^2$ and any $H\in (0,1]$. For that purpose, we compute the first and second derivative of $G$
with respect to $t,u$. We denote by $d$ the distance between $M_1$ and $M_2$, namely
$d=\sqrt{(1-t)^2+H^2(1-u)^2}$
$$
\frac{\partial G}{\partial t}=8AP\left(1-\frac{1-t}{d}\right)+2H(1-u)P^2-32H(1-u)\left(1+2t+3t^2+H^2(3+2u+u^2)\right).
$$
$$
\frac{\partial G}{\partial u}=8APH\left(1-\frac{H(1-u)}{d}\right)+2H(1-t)P^2-32H(1-t)\left(3+2t+t^2+H^2(1+2u+3u^2)\right).
$$
The second derivative of $G$ in $u$ can be written as the sum of four terms as
$$\frac{\partial^2 G}{\partial u^2}=32H(\mathcal{E}_1+\mathcal{E}_2+\mathcal{E}_3+\mathcal{E}_4)$$
with
$$\mathcal{E}_1=(1+t+u-tu)H^2 \left(1-\frac{H(1-u)}{d}\right)^2$$
$$\mathcal{E}_2=(1+t+u-tu)H^2(t+uH+d)(1-t)^2/d^3$$
$$\mathcal{E}_3=2(1-t)H(t+uH+d)\left(1-\frac{H(1-u)}{d}\right)$$
$$\mathcal{E}_4=-H^2(1+3u)(1-t).$$
We follow now the same strategy than previously: thanks to the affine change of variable
$x^\prime=x, y^\prime=y/(4H)$ we are led to examine the situation of the second derivatives where $H=1/4$.
Let us look at $\mathcal{E}_3+\mathcal{E}_4$. Using $t+uH+d\geq \sqrt{1+H^2}$ (estimate of the perimeter
by the perimeter of the rhombus) and $d\leq \sqrt{1+H^2}$, we get
$$\mathcal{E}_3+\mathcal{E}_4 \geq 2H(1-t)\left(\sqrt{1+H^2} -H(1-u) -H^2(1+3u)\right)$$
now, it is clear that the right-hand side is positive for $H=1/4$ for all $t,u$. Since the two first terms 
$\mathcal{E}_1,\mathcal{E}_2$ are positive, we infer that the second derivative
$\frac{\partial^2 G}{\partial u^2}$ is positive: that implies that $G$ cannot have a maximizer for $0<u<1$.
Thus, for a maximizer, we necessarily have $u=0$ or $u=1$. The case $u=1$ corresponds to a rectangle.
It remains to look at the case $u=0$. In that case, we are going to prove directly that $G(t,0)<0$ for $0<t<1$
it will imply that in that case, the maximizing domain is a rhombus and a simple comparison of
$G$ in these two cases provides $G=0$ for the rhombus and $G=-64H(H-1)^2$ that shows that
the maximizing domain is any rhombus (including the square that we recover in the case of a rectangle with
$H=1$).

Let us prove that $G(t,0) <0$ when $0<t<1$. For that purpose, we use first the convexity of the
function $t\mapsto \sqrt{(1-t)^2+H^2}$ to claim that
$$
\sqrt{(1-t)^2+H^2}\leq \sqrt{1+H^2} +t(H-\sqrt{1+H^2}).
$$
Then, we can estimate the perimeter squared by
$$P^2\leq 16\left(1+H^2-2t+2t^2+2t(\sqrt{1+H^2} +t(H-\sqrt{1+H^2})\right).$$
This allows to estimate $G=P^2A-96W$ by
$$G(t,0)\leq t(a_1t^2+a_2t+a_3)$$
with
$$a_1=1+2H-2\sqrt{1+H^2},\;a_2=2H-1,\;a_3=2\sqrt{1+H^2}-2-2H^2.$$
We have $a_1\geq 0 \Leftrightarrow H\geq 3/4$, $a_2\geq 0 \Leftrightarrow H\geq 1/2$ and $a_3<0$.
Then, $P_H(t)=a_1t^2+a_2t+a_3$ is clearly negative when $H\leq 1/2$. When $H>3/4$, $P_H(t)$ is increasing
on $[0,1]$ since its minimum is attained at a negative $-a_2/2a_1$. But $P_H(1)=-2(H-1)^2\leq 0$;
In the last case, $1/2<H\leq 3/4$, $P_H$ has a maximum at $t_1=-a_2/2a_1$ that is positive. 
But when $H\geq (-3+2\sqrt{21})/10$ this maximum point being outside $[0,1]$, $P_H$ is still increasing, then negative on $[0,1]$.
At last, when $H\in [1/2,  (-3+2\sqrt{21})/10]$, $t_1\in [0,1]$ and the maximum of $P_H$ is 
$$P_H(t_1)=-\frac{a_2^2}{4a_1} + a_3$$
and it is straightforward to check that this number is negative.
Therefore, we have proved that $P_H(t)=a_1t^2+a_2t+a_3$ is always negative when $0<t<1$.
\qed

\section{Behavior near $(0,0)$ and $(1,1)$}\label{sec-pentes}
In this section we investigate the behavior of the diagram near the two ``extremal'' points, on the left and on the right of the diagram. These points are: the origin, associated to thin domains collapsing to a segment, and the point $(1,1)$ associated to the ball.

\subsection*{Behavior near the origin $(0,0)$}

We will compute the minimal slope $\gamma^-_O$ and the maximal slope $\gamma^+_O$ at the origin, defined as
\begin{align*}
\gamma^-_O & := \liminf\left\{ \frac{y_\e}{x_\e}\ :\ (x_\e, y_\e) \in \mathcal D\,,\ (x_\e, y_\e) \to (0,0) \right\}, \quad 
\\
\gamma^+_O & := \limsup\left\{ \frac{y_\e}{x_\e}\ :\ (x_\e, y_\e) \in \mathcal D\,,\ (x_\e, y_\e) \to (0,0) \right\}. 
\end{align*}
To this aim, let us first rewrite in a more tractable way the ratios $y/x$ for $(x,y)\in \mathcal D$. By definition, the pair $(x,y)\in \mathcal D$ is associated to an admissible shape $\Omega \in \mathcal A$, which in turn is characterized by its intersection with the first quadrant, say $\Omega^+:=\Omega \cap \{x\geq 0 , y\geq 0\}$. This set can be described as
\begin{equation}\label{h}
\Omega^+=\{0 \leq x \leq x_0, \quad 0 \leq y\leq h(x)\},
\end{equation}
for some $x_0>0$ and $h$ concave decreasing. 
Therefore we can rewrite
$$
\frac{y}{x} = \frac{y(\Omega)}{x(\Omega)} = \frac{\pi^2}{2} \frac{\int_0^{x_0} (x^2 h(x) + h^3(x)/3)\, \mathrm{d}x}{\left[\int_0^{x_0} \sqrt{1+(h'(x))^2} \, \mathrm{d}x + h(x_0)\right]^2 \int_0^{x_0} h(x)\, \mathrm{d}x}.
$$

\begin{proposition}
The minimal and maximal slopes at the origin are
$$
\gamma^-_O=\frac{\pi^2}{12}, \quad \gamma_O^+= \frac{\pi^2}{6}.
$$
Moreover, they are attained, e.g., by sequences of thin rhombi and thin rectangles, respectively.
\end{proposition}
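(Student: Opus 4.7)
The plan is to rewrite $y(\Omega)/x(\Omega) = 8\pi^2 W(\Omega)/[P^2(\Omega) A(\Omega)] = 8\pi^2/F(\Omega)$ and to treat the two slopes independently. The minimum comes essentially for free from Theorem~\ref{theoF}: since $F(\Omega)\le 96$ on $\mathcal A$, with equality only on rhombi, we have the global bound $y(\Omega)/x(\Omega) \ge \pi^2/12$, so certainly $\gamma_O^-\ge \pi^2/12$. The rhombus family $\{S_\ell\}$ of Proposition~\ref{closure} realizes this ratio identically for every $\ell$ while sending $(x,y)\to(0,0)$ as $\ell\to 0^+$, so $\gamma_O^-=\pi^2/12$.

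For the maximum, I would perform an asymptotic expansion along thin sequences. If $\Omega_n\in\mathcal A$ satisfies $(x(\Omega_n), y(\Omega_n))\to(0,0)$, the analysis in the proof of Proposition~\ref{closure} combined with the scale invariance of $x$ and $y$ allows me, up to a rotation and a dilation, to parametrize the first-quadrant boundary of $\Omega_n$ by a concave decreasing function $h_n:[0,1]\to[0,\ell_n]$ with $h_n(0)=\ell_n \to 0^+$. With the rescaled profile $\tilde h_n(x):=h_n(x)/\ell_n$ (concave, decreasing, $[0,1]$-valued, $\tilde h_n(0)=1$), the elementary expansions $A(\Omega_n)=4\ell_n\int_0^1\tilde h_n\,dx$, $P(\Omega_n)=4+O(\ell_n)$, $W(\Omega_n)=4\ell_n\int_0^1 x^2\tilde h_n\,dx + O(\ell_n^3)$ yield
$$
\frac{y(\Omega_n)}{x(\Omega_n)} = \frac{\pi^2}{2}\,\frac{\int_0^1 x^2\tilde h_n\,dx}{\int_0^1 \tilde h_n\,dx}+o(1).
$$
Helly's selection theorem provides a subsequence along which $\tilde h_n$ converges pointwise to a concave decreasing $\tilde h$ with $\tilde h(0)=1$; uniform positivity of the denominators, guaranteed by the concavity bound $\tilde h_n(x)\ge 1-x$, together with dominated convergence, let me pass to the limit inside the integrals.

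The key remaining inequality is $M(\tilde h):=\int_0^1 x^2\tilde h/\int_0^1\tilde h \le 1/3$ for every nonnegative decreasing $\tilde h$, with equality only when $\tilde h$ is constant. I expect this to be the main (and still routine) step; the cleanest argument is by stochastic dominance: if $\tilde h$ is decreasing, then $\int_0^t\tilde h/\int_0^1\tilde h \ge t$ for every $t\in[0,1]$, so the normalized measure $\tilde h(x)\,dx/\int\tilde h$ is dominated by the uniform measure on $[0,1]$, and integrating the increasing function $x\mapsto x^2$ against the two measures yields $M(\tilde h)\le 1/3$. This gives $\gamma_O^+\le\pi^2/6$, and attainment in the limit is realized by the thin rectangles $R_\ell$, for which $\tilde h_n\equiv 1$ and a direct computation using Proposition~\ref{closure} gives $y(R_\ell)/x(R_\ell)=\pi^2(1+\ell^2)/[6(1+\ell)^2]\to\pi^2/6$ as $\ell\to 0^+$.
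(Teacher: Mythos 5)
Your proposal is correct and follows essentially the same route as the paper: the lower slope is deduced from the bound $P^2A/W\le 96$ (equivalently Corollary \ref{corAH}) with rhombi as extremals, and the upper slope from the thin-domain expansion reducing matters to the inequality $\int_0^1 x^2 f\,\mathrm{d}x\le \frac13\int_0^1 f\,\mathrm{d}x$ for nonnegative decreasing profiles, attained by rectangles. The only differences are cosmetic: you prove that integral inequality by stochastic dominance where the paper splits the interval at $1/\sqrt{3}$, and your Helly compactness step is superfluous since the inequality already holds for each $\tilde h_n$.
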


\begin{proof}
In view of Corollary \ref{corAH}, we immediately get $\gamma^-_O\geq \pi^2/12$. The equality sign is obtained e.g. by taking a sequence of thin rhombi shrinking to a segment.

Let us study the upper bound. Let $\Omega_\e$ be a sequence satisfying the $\limsup$ characterized by some $h_\e$ according to \eqref{h}. Without loss of generality, up to a rotation of $\pi/2$, we may assume that $h_\e$ is defined in the interval $[0,1]$ and that $h_\e(0)\to 0$. Thus $h_\e$ is of the form $h_\e(x)= c_\e f_\e(x)$, with $c_\e\to 0$ and $f_\e$ in the following class:
$$
\mathcal F:=\{ f:[0,1] \to \mathbb R\ :\ f (0)=1, \ f \ \hbox{concave, decreasing, non negative}\}.
$$
Then
 \begin{align*}
 \frac{y(\Omega_\e)}{x(\Omega_\e)} & = \frac{\pi^2}{2} \frac{\int_0^{1} (x^2 c_\e f_\e(x) + c_\e^3 f_\e^3(x)/3)\, \mathrm{d}x}{\left[\int_0^{1} \sqrt{1+(c_\e f_\e'(x))^2} \, \mathrm{d}x + c_\e f_\e(1)\right]^2 \int_0^{1} c_\e f_\e(x)\, \mathrm{d}x}
\\ & = 
 \frac{\pi^2}{2} \frac{ \int_0^{1}x^2 f_\e(x) \, \mathrm{d}x + o(1) }{\int_0^{1} f_\e(x)\, \mathrm{d}x + o(1)}.
\end{align*}

This ratio is clearly bounded above by $\pi^2/2$. To get a finer estimate, we show that for every $f\in \mathcal F$ 
\begin{equation}\label{enough}
\int_0^1 (x^2-1/3) f(x)\, \mathrm{d}x \leq 0.
\end{equation}
This fact easily comes by splitting the interval of integration as $[0,1]=[0,1/\sqrt{3}] \cup [1/\sqrt{3},1]$. Using the monotonicity and positivity of $f$, we infer that the integrand is negative in $[0,1/\sqrt{3}]$ and positive in $[0,1/\sqrt{3}]$. In both cases the integrand is bounded above by the function $f(1/\sqrt{3}) (x^2-1/3)$, which has zero average in $[0,1]$. This gives \eqref{enough}, in particular the estimate
$$
\gamma^-_O \leq \frac{\pi^2}{6}.
$$
In order to prove that $\gamma^-_O=\pi^2/6$ it is enough to exhibit a sequence of admissible shapes for which the ratio $y_\e/x_\e \to \pi^2/6$. This is the case of thin rectangles, in which $f_\e\equiv 1$ for every $\e$. 
\end{proof}

\begin{remark} Notice that the highest slope associated to thin domains coincides with the slope of the curve associated to the stadiums. Let $\Omega_L$ be the stadium associated to two half disks of centers $\pm L$ and radius $1$, then as $L\to +\infty$, $(x(\Omega_L), y(\Omega_L))\to (0,0)$, since
$$
A(\Omega_L)= \pi + 4L\,,\quad P(\Omega_L)= 2\pi + 4 L\,,\quad W(\Omega_L)= \frac{\pi}{2} +  4 L +  \pi L^2 +  \frac43 L^3.
$$
The slope at the origin is 
$$
\lim_{L\to +\infty} \frac{y(\Omega_L)}{x(\Omega_L)} =\frac{\pi^2}{6}.
$$
\end{remark}

\subsection*{Behavior near $(1,1)$}
As in the previous section, we aim to compute the minimal slope $\gamma^-_{\mathbb D}$ and the maximal slope $\gamma^+_{\mathbb D}$ at the point $(1,1)$, defined as the left derivatives of $L^+$ and $L^-$ in $x=1$, respectively. We will prove that the two following limits exist
$$
\gamma^-_{\mathbb D}:=\lim_{x\to 1^-} \frac{L^+(x)- 1}{x-1}, \quad \gamma^+_{\mathbb D}:=\lim_{x\to 1^-} \frac{L^-(x)- 1}{x-1},
$$
and we will compute their value.

\begin{proposition}\label{propslope} The minimal and maximal slopes at $(1,1)$ are
$$
\gamma^-_{\mathbb D}= \frac34, \quad \gamma^+_{\mathbb D}=+ \infty.
$$
\end{proposition}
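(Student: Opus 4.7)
\emph{Plan.} The approach is via nearly spherical parametrisations of shapes near the disk: write $\partial\Omega$ in polar coordinates as $\rho(\theta)=1+\varepsilon u(\theta)$ for some small $\varepsilon>0$ and $u\in C^\infty([0,2\pi])$. The double symmetry constraint forces $u$ to contain only cosine modes of even index, i.e.\ $u(\theta)=\sum_{k\geq 0}a_{2k}\cos(2k\theta)$. Second-order Taylor expansions in $\varepsilon$ of
\[
A=\tfrac12\int_0^{2\pi}\rho^2\,d\theta,\quad P=\int_0^{2\pi}\sqrt{\rho^2+(\rho')^2}\,d\theta,\quad W=\tfrac14\int_0^{2\pi}\rho^4\,d\theta,
\]
together with Parseval's identity, yield, after cancellation of the first-order contributions,
\[
x(\Omega_\varepsilon)-1=-2\varepsilon^2\sum_{k\geq 1}a_{2k}^2+O(\varepsilon^3),\quad y(\Omega_\varepsilon)-1=-\tfrac{\varepsilon^2}{2}\sum_{k\geq 1}\bigl((2k)^2-1\bigr)a_{2k}^2+O(\varepsilon^3),
\]
whence the limiting slope along the family is
\[
\lim_{\varepsilon\to 0}\frac{y(\Omega_\varepsilon)-1}{x(\Omega_\varepsilon)-1}=\frac{1}{4}\cdot\frac{\sum_{k\geq 1}\bigl((2k)^2-1\bigr)a_{2k}^2}{\sum_{k\geq 1}a_{2k}^2},
\]
a convex combination of the values $(4k^2-1)/4\in\{3/4,\,15/4,\,35/4,\dots\}$.

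Both assertions follow from this identity. The minimum $3/4$ is attained by the ``elliptic'' perturbation $u=\cos(2\theta)$, which is admissible for $\varepsilon$ small, so this family provides $(L^+(x)-1)/(x-1)\to 3/4$ and hence $\gamma^-_{\mathbb{D}}\leq 3/4$. The matching lower bound $\gamma^-_{\mathbb{D}}\geq 3/4$ will follow by taking an optimiser $\Omega_n$ for $L^+(x_n)$ (which exists by Proposition \ref{existence}), writing it in the nearly spherical form above, and invoking the convex-combination lower bound $3/4$ on the limiting ratio. For the upper slope, for each integer $k\geq 1$ the single-mode perturbation $u=\cos(2k\theta)$ is admissible (convex) for $\varepsilon$ small depending on $k$, as the polar-coordinate convexity condition $\rho^2+2(\rho')^2-\rho\rho''>0$ holds for $\varepsilon$ comparable to $1/k^2$; the resulting sequence in $\mathcal{D}$ converges to $(1,1)$ with limiting slope $(4k^2-1)/4$, giving $(L^-(x_n)-1)/(x_n-1)\geq (4k^2-1)/4+o(1)$ along that sequence. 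Letting $k\to\infty$ yields $\gamma^+_{\mathbb{D}}=+\infty$.

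The main technical obstacle is to justify that every sequence $\Omega_n\in\mathcal{A}$ with $(x(\Omega_n),y(\Omega_n))\to(1,1)$ admits, after area normalisation, a nearly spherical expansion $\rho_n(\theta)=1+\varepsilon_n u_n(\theta)$ with $\varepsilon_n\to 0$ and $u_n$ uniformly bounded in a norm (e.g.\ $H^1$) strong enough to legitimise the Taylor expansions above and the passage to the limit in the ratio. The characterisation of the disk as the unique shape in $\mathcal{A}$ attaining $x=1$ (Proposition \ref{bounds}), together with the continuity of the coordinate functionals and Blaschke compactness, yields complementary Hausdorff convergence $\Omega_n\to\mathbb{D}$; convexity and the fact that $P(\Omega_n)\to 2\pi$ then provide the required regularity of the radial parametrisation, closing the argument.
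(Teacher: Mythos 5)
Your treatment of the minimal slope is essentially the paper's own argument. The paper likewise writes nearly spherical shapes as $r_\e=1+c_\e f_\e$, expands $A,P,W$ to second order, and bounds the resulting Rayleigh quotient $\int \dot g_\e^2/\int g_\e^2$ (with $g_\e$ the zero-average part of $f_\e$) from below by the first nonzero Neumann eigenvalue $\mu_1((0,\pi/2))=4$, which is exactly your Parseval inequality $\sum_k (2k)^2a_{2k}^2\geq 4\sum_k a_{2k}^2$; it then exhibits the ellipses (your mode $\cos 2\theta$) to attain $3/4$. Your second-order expansions of $x(\Omega_\e)-1$ and $y(\Omega_\e)-1$ are correct. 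Note, however, that you only announce the matching lower bound $\gamma^-_{\mathbb D}\geq 3/4$ and explicitly flag, without resolving, the key technical point (uniform control of $\dot f_\e$ strong enough to make the remainder in the perimeter expansion $o(c_\e^2)$ along a sequence realizing the liminf); your plan coincides with what the paper does, but as written that half of the statement is not proved.

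For the maximal slope there is a genuine gap. The quantity $\gamma^+_{\mathbb D}$ is defined as the limit $\lim_{x\to 1^-}(L^-(x)-1)/(x-1)$, so one must show that the \emph{liminf} of $(1-L^-(x))/(1-x)$ is $+\infty$. Your single-mode perturbations $u=\cos(2k\theta)$ produce, for each fixed $k$, one particular sequence $x_\e^{(k)}\to 1^-$ along which the ratio tends to $(4k^2-1)/4$; since $L^-(x_\e^{(k)})\leq y_\e^{(k)}$ this bounds the \emph{limsup} from below by $(4k^2-1)/4$, but it does not exclude that the ratio stays bounded along some other sequence $x_n\to 1^-$. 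The paper closes precisely this point by sandwiching an arbitrary sequence between the abscissae of consecutive regular $2n$-gons and invoking the monotonicity of $L^-$. Your construction can be repaired in the same spirit: for fixed $k$ the map $\e\mapsto x_\e^{(k)}$ is continuous and tends to $1$, so by the intermediate value theorem its image covers an interval $(1-\delta_k,1)$, on which $(1-L^-(x))/(1-x)\geq (1-y_\e^{(k)})/(1-x_\e^{(k)})\geq (4k^2-1)/4-1$ for $\e$ small enough; but this covering step (or an equivalent monotonicity argument) must be made explicit, otherwise only $\limsup=+\infty$ is obtained. Apart from this, replacing the paper's regular polygons by high-frequency trigonometric perturbations is a legitimate alternative, with the minor caveat that the admissible range of $\e$ shrinks like $1/k^2$ and so the error terms must be tracked uniformly in $\e$ for each fixed $k$.
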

\begin{proof}
The proof is divided into several steps.

{\it Step 1: the sequence of regular polygons.} Let $\widehat{\Omega}_n$ be the regular polygons with $2n$ sides, $n\geq 2$, with outer radius 1. Thus we have
$$
A(\widehat{\Omega}_n)= 2 n \sin (\pi/2n) \cos(\pi/2n)  , \quad P(\widehat{\Omega}_n)=4 n \sin (\pi/2n),
$$
$$
W(\widehat{\Omega}_n) = n \cos^4(\pi/2n) \left(\tan(\pi/2n) + \frac{\tan^3(\pi/2n)}{3}  \right).
$$
So that 
$$
\widehat{x}_n:= x(\widehat{\Omega}_n)
= \frac{  \sin (\pi/2n)  }{(\pi/2n)  \cos(\pi/2n)  (1 + \tan^2(\pi/2n)/3)}
$$
and
$$
\widehat{y}_n:= y(\widehat{\Omega}_n) = \frac{ (\pi/2n) \cos(\pi/2n)}{\sin(\pi/2n)}.
$$
Along this sequence, it is immediate to check that
$$
\lim_{n \to \infty }\frac{\widehat{y}_n-1}{\widehat{x}_n-1} = +\infty.
$$

{\it Step 2: the maximal slope.} Let $x_n\to 1^-$ be a sequence realizing
$$
\lim_{n\to \infty} \frac{1-L^-(x_n)}{1-x_n} = \liminf_{x \to 1} \frac{1-L^-(x)}{1-x}.
$$ 
We can extract an increasing subsequence $x_{n_k}$ such that $ \widehat{x}_{n_k} \leq x_{n_k} \leq \widehat{x}_{n_{k}+1}$.  On the one hand, we clearly have $1 - x_n \leq 1- \widehat{x}_{n_k} $. On the other hand, since $L^-$ is increasing and defined as a minimum, we have $1-L^-(x_n) \geq 1-\widehat{y}_{n_{k}+1}$. Using these estimates together with Step 1, we conclude that
$$
\lim_{n\to \infty} \frac{1-L^-(x_n)}{1-x_n} \geq \lim_{k \to \infty}\frac{1-\widehat{y}_{n_{k}+1}}{1- \widehat{x}_{n_k}} = + \infty.
$$

{\it Step 3: the minimal slope.}  In this part we use the representation of shapes via polar coordinates: $\Omega=\{(\rho \cos \theta, \rho \sin \theta) \ :\ \theta \in[0,2\pi]\,,\ \rho \in [0,\rho_{max}(\theta)]\}$, for a suitable function $\rho_{max}$.

Let $x_\e\to 1^-$ as $\e\to 0$ be a sequence realizing
$$
\lim_{\e \to 0}  \frac{1-L^+(x_\e)}{1-x_\e} = \liminf_{x \to 1} \frac{1-L^+(x)}{1-x}.
$$ 
Let $\Omega_\e$ be the associated shapes in $\mathcal A$. So that
\begin{equation}\label{ratioe}
 \frac{1-L^+(x_\e)}{1-x_\e} =  \frac{1-y(\Omega_\e)}{1-x(\Omega_\e)}.
\end{equation}
 Using the representation in polar coordinates, the shapes are characterized by a function $r_\e(\theta)$ as follows:
$$
\Omega_\e=\{(\rho \cos \theta, \rho \sin \theta) \ :\ \theta \in[0,2\pi]\,,\ \rho \in [0,r_{\e}(\theta)]\}.
$$
Since $x_\e\to 1^-$, without loss of generality, we may assume that $\Omega_\e$ converges to the disk of unit radius $\mathbb D$, with respect to the complementary Hausdorff distance. Therefore we infer that $r_\e \to 1$ uniformly and it can be written as  $r_\e(x)= 1 + c_\e f_\e(x)$, for some $c_\e \to 0$ and $f_\e\in C^0([0,2\pi])$ with $\|f_\e\|_{\infty}= 1$. 
Let us write the Taylor developments in $\e=0$ of $A,P,W$ computed at $\Omega_\e$:
\begin{align*}
A(\Omega_\e)& = \int_0^{2\pi} \frac{(1+ c_\e f_\e)^2}{2} = \pi + c_\e \int_0^{2\pi} f_\e  + \frac{c_\e^2}{2} \int_0^{2\pi}f_\e^2
\\
& = A(\D) + 4 c_\e \int_0^{\pi/2} f_\e + 2 c_\e^2 \int_0^{\pi/2} f_\e^2 ,
\\
P(\Omega_\e) & = \int_0^{2\pi} \sqrt{r_\e^2 + \dot{r_\e}^2 } = \int_0^{2\pi}\sqrt{ 1 + 2 c_\e f_\e + c_\e^2 (f_\e^2 + \dot{f}_\e^2)}
\\ & = \int_0^{2\pi} \left[1 + c_\e f_\e + \frac{c_\e^2}{2}(f_\e^2 + \dot{f}_\e^2) - \frac18(2c_\e f_\e)^2\right]+ o(c_\e^2) 
\\ & = P(\D) + 4 c_\e \int_0^{\pi/2} f_\e + 2 c_\e^2 \int_0^{\pi/2} \dot{f}_\e^2  + o (c_\e^2),
\\
W(\Omega_\e) & = \int_0^{2\pi} \frac{(1+c_\e f_\e)^4}{4} = \frac14 \int_0^{2\pi} [1 + 4 c_\e f_\e + 6 c_\e^2 f_\e^2 ] + o(c_\e^2)
\\ & = W(\D) + 4 c_\e \int_0^{\pi/2} f_\e + 6 c_\e^2 \int_0^{\pi/2} f_\e^2 + o (c_\e^2). 
\end{align*}
Note that here we have used the double symmetry to replace the integrals over $[0,2\pi]$ as integrals over $[0,\pi/2]$. Inserting these expressions in \eqref{ratioe}, and denoting by $g_\e(x):= f_\e(x) - \fint f_\e $, we obtain 
$$
 \frac{1-y(\Omega_\e)}{1-x(\Omega_\e)} =\frac14 \frac{\left( \int_0^{\pi/2} f_\e \right)^2 + 2 \pi \int_0^{\pi/2}(\dot{f}_\e^2 - f_\e^2)}{2\pi \int_0^{\pi/2}f_\e^2 - \left( \int_0^{\pi/2} f_\e \right)^2} + o(1)
 = \frac14 \left( \frac{ \int_0^{\pi/2} \dot{g}_\e^2}{\int_0^{\pi/2}g_\e^2 } - 1 \right) + o(1). 
$$
The last expression can be bounded from below with the first non trivial Neumann eigenvalue $\mu_1((0,\pi/2))$ of the interval $(0,\pi/2)$:
$$
 \frac{ \int_0^{\pi/2} \dot{g}_\e^2}{\int_0^{\pi/2}g_\e^2 } \geq \inf_{\int_0^{\pi/2} g = 0\,,\ g\not\equiv 0 }  \frac{ \int_0^{\pi/2} \dot{g}^2}{\int_0^{\pi/2}g^2 } =:\mu_1((0,\pi/2))=4.
 $$
Thus, passing to the limit $\e\to 0$, we obtain 
$$
\gamma_\D^-\geq \frac34.
$$
If we are able to find a sequence of shapes converging to the disk with slope $3/4$ we are done. A sequence with this property is the sequence of ellipses $E_\e$ with semi-axes $1$ and $1+\e$. For these shapes we have
\begin{align*}
A(E_\e)&= \pi (1+\e),  \quad W(\Omega_\e)= \frac{\pi}{4}[(1+\e)^3 + (1+\e)], 
\\
P(E_\e)&=\int_0^{2 \pi} \sqrt{1  - 2 \e \sin^2(\theta) + \e^2 \sin^2(\theta)}\, \mathrm{d}\theta.
\end{align*}
A direct computation gives
$$
1- x(E_\e)=\frac{\e^2}{2} + o(\e^2),\quad 1-y(E_\e)=\frac{3\e^2}8 + o(\e^2),
$$
therefore, in the limit as $\e \to 0$, we have the desired result:
$$
\lim_{\e \to 0^+} \frac{1-y(E_\e)}{1-x(E_\e)} = \frac 34.
$$
\end{proof}

\section{Numerics}
In this section, we perform simple numerical methods to try to identify the optimal domains on the upper and lower boundaries
of the Blaschke-Santal\'o diagram. We choose a different methods for each boundary, since we already know that the optimal
domains are "smooth" (at least $C^{1,1}$) on the upper boundary while they are polygonal on the lower boundary.
We also compare the best domains we get numerically with the candidates as they appear for example in Reference \cite{BBO}.
\subsection{The upper boundary $L^+$}
For a given abscissa $x_0\in (0,1)$, the problem consists in {\it minimizing} the perimeter among convex sets $\Omega$ that
satisfy for example $A(\Omega)=\pi$ and $W(\Omega)=\pi/2x_0$. Since we know that the optimal domain is smooth, we choose
to represent the convex domain by its support function $p(\theta)$ as suggested for example in \cite{ABo} and \cite{Bo}.
Then we have the choice
\begin{enumerate}
\item either to decompose the support function in Fourier series, and the unknown are the Fourier coefficients.
\item or to discretize the support function by looking for its value $p_i$ on a discrete grid $\theta_i$.
\end{enumerate}
These two methods are discussed and implemented (in particular for spectral problems) in  \cite{ABo} and \cite{Bo}.
Here we choose the Fourier series decomposition. Due to the symmetries, the Fourier series of $p$ writes
$$p(\theta)=\sum_{k=0}^N a_{2k} \cos 2k\theta.$$
The reason of our choice is the following. First,  the geometric quantities are either exactly calculable (perimeter, area)
in terms of the Fourier coefficients:
$$P(\Omega)=2\pi a_0,\quad A(\Omega)=\frac{\pi}{2}\left(2a_0^2+\sum_{k=1}^N (1-4k^2)a_{2k}^2\right)$$
or can be computed with a very good accuracy (Simpson rule for example) for the moment of inertia:
$$W(\Omega)=\frac{1}{12} \int_0^{2\pi} \left[3p^4-6p^2{p^\prime}^2- {p^\prime}^4\right] \mathrm{d}\theta.$$
Then, the most important point is that this choice allows us to consider a small-medium scale optimization problem. Indeed,
we have chosen to work with 16 Fourier coefficients (where a discretization method would require several hundred unknowns
for the same accuracy). Of course, this choice has drawbacks, since it is impossible to capture shapes with segments by considering
a truncated Fourier series: our support function being $C^\infty$, the shape is strictly convex (see \cite{Sch}).
As we will see, the stadium (or a stadium-like shape) seems to be the optimal domain for a range of values $x_0\in [0,0.72]$ and it
is slightly difficult to capture: we probably need to work with more Fourier coefficients here.

In \cite{BBO}, for the same problem, some numerical results are presented and the authors seem to identify two families of optimal
domains for this problem: stadium-like and ellipse-like. We recover a similar result here, but let us mention two points:
\begin{itemize}
\item it is not because the shape looks like a stadium and the numerical values are very similar that it is a stadium! We have a famous counter-example
in \cite{HO}
\item The ellipse {\bf cannot} be an optimal domain, as it is proved in the following proposition.
\end{itemize}
\begin{proposition}
An ellipse (different from a disk) cannot be a solution of the problem
$$\min \{P(\Omega), \mbox{ with } A(\Omega)=A_0,\,W(\Omega)=W_0\}.$$
\end{proposition}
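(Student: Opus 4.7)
My plan is to argue by contradiction via the Lagrange multiplier rule combined with the Hadamard formulas for $A$, $P$, $W$. Suppose an ellipse $E$ with semi-axes $a>b>0$ solves the problem. Since $E$ is smooth and strictly convex, small smooth perturbations remain convex, so $E$ is in fact a local minimizer of $P$ among all smooth shapes subject to $A=A_0$ and $W=W_0$. The two constraints are independent because the linear forms $V\mapsto A'(E,V)=\int_{\partial E}V\cdot n$ and $V\mapsto W'(E,V)=\int_{\partial E}\|x\|^2 V\cdot n$ are independent (the functions $1$ and $\|x\|^2$ are not proportional on $\partial E$). Hence the Lagrange multiplier theorem provides constants $\mu_1,\mu_2\in\mathbb R$ such that $P'(E,V)=\mu_1 A'(E,V)+\mu_2 W'(E,V)$ for every admissible $V$.

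Plugging in the Hadamard expression $P'(E,V)=\int_{\partial E} H(x)\,V\cdot n\,\mathrm{d}\mathcal H^1(x)$ and using arbitrariness of $V\cdot n$ on $\partial E$, this identity forces the pointwise optimality condition
\begin{equation*}
H(x)=\mu_1+\mu_2\|x\|^2 \qquad \text{for every } x\in\partial E.
\end{equation*}
In other words, the curvature of the ellipse would have to be an affine function of $\|x\|^2$ along $\partial E$.

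I would then derive the contradiction by a direct parametric check. Writing $\partial E=\{(a\cos t,b\sin t)\}$ and setting $u=\cos^2 t\in[0,1]$, one has $\|x\|^2=b^2+(a^2-b^2)u$ and $H(t)=ab\bigl(a^2-(a^2-b^2)u\bigr)^{-3/2}$, so the condition becomes
\begin{equation*}
\frac{ab}{\bigl(a^2-(a^2-b^2)u\bigr)^{3/2}}=\alpha+\beta u \qquad \text{for all } u\in[0,1],
\end{equation*}
for some constants $\alpha,\beta$. The left-hand side is of the form $C(A+Bu)^{-3/2}$ with $B=-(a^2-b^2)\neq 0$, hence not affine in $u$ on any interval. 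This contradicts the preceding equality unless $a=b$, i.e.\ unless $E$ is a disk. The main (minor) obstacle is the justification of the Lagrange multiplier step, which is handled by the independence remark above; everything else reduces to the elementary observation that the curvature of a non-circular ellipse is not an affine function of $\|x\|^2$.
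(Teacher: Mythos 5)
Your proposal is correct and follows essentially the same route as the paper: both invoke the Lagrange multiplier rule (justified by the smoothness and strict convexity of the ellipse, so all boundary variations are admissible) to obtain the pointwise condition $H=\mu_1+\mu_2\|x\|^2$ on $\partial E$, and both then check on the explicit parametrization that the curvature of a non-circular ellipse cannot be an affine function of $\|x\|^2$. The only cosmetic difference is that the paper derives the final contradiction by differentiating the identity in $t$, whereas you substitute $u=\cos^2 t$ and observe that $C(A+Bu)^{-3/2}$ is not affine in $u$; these are equivalent elementary verifications.
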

\begin{proof}
Let us consider an ellipse of semi-axes $a>b$ whose parametrization is $x(t)=a\cos t, y(t)=b\sin t, t\in [0,2\pi]$.
Since it is a convex domain $C^2_+$ (in the sense that the curvature is bounded from below by a positive constant), we can perform
any variation of its boundary. The shape derivative techniques, see \cite[Chapter 5]{HP} shows that there exist two Lagrange multipliers $\lambda_1,\lambda_2$
such that
$$\forall V\in C^0(\mathbb{R}^2,\mathbb{R}^2),\ \int_{\partial\Omega} \mathcal{C} V.\nu =\lambda_1 \int_{\partial\Omega} V.\nu + \int_{\partial\Omega} (x^2+y^2) V.\nu$$
where $\mathcal{C}$ is the curvature and $\nu$ the exterior normal vector. Since this is true for any $V$, this implies
\begin{equation}\label{ellipse}
\mathcal{C}=\lambda_1 + \lambda_2 (x^2+y^2).
\end{equation}
Using the parametrization of the boundary, \eqref{ellipse} can be written
\begin{equation}\label{elli2}
\frac{ab}{\left(a^2\sin^2 t+b^2\cos^2 t\right)^{3/2}} =\lambda_1 + \lambda_2 (a^2\cos^2 t+b^2\sin^2 t).
\end{equation}
Differentiating \eqref{elli2} yields
$$\frac{3ab(b^2-a^2) \sin 2t}{2\left(a^2\sin^2 t+b^2\cos^2 t\right)^{5/2}} = \lambda_2 (b^2-a^2) \sin 2t$$
and we see that the previous equality can hold for any $t$ if and only if $a=b$, that proves the claim.
\end{proof}
For any given abscissa $x_0$ there is only one (up to scaling) stadium and ellipse satisfying $A^2/(2\pi W)=x_0$. In the following table, we represent for different
values of $x_0$, the corresponding value of $y=4\pi A/P^2$ for the ellipse (2nd column), the stadium (3rd column) and the best domain we got with our numerical
procedure where we use the routine {\it fmincon} of Matlab, providing the gradients of the objective function and the constraints (with 16 Fourier coefficients).
The convexity constraint is linear in the unknowns $a_{2k}$ and is simply written as $(p+p^{\prime\prime})(\theta_j)\geq 0$ for the same discretization of $(0,\pi/2)$ used
for computing the moment of inertia.
We recall that minimizing the perimeter is equivalent here to maximize $y$, so we are looking for the largest value of $y$.
$$\begin{array}{c|c|c|c}
x_0 & \mbox{ellipse} & \mbox{stadium} & \mbox{best numerical} \\\hline
0.1 & 0.1225 & 0.2522 & ? \\
0.2 & 0.2413 & 0.2839 & 0.2420 \\
0.3 & 0.3551 & 0.3998 & 0.3870 \\
0.4 & 0.4634 & 0.5036 & 0.4861 \\
0.5 & 0.5660 & 0.5977 & 0.5841 \\
0.6 & 0.6630 & 0.6844 & 0.6777 \\
0.7 & 0.7546 & 0.7654 & 0.7641 \\
0.75 & 0.7985 & 0.8043 & 0.8050 \\
0.8 & 0.8411 & 0.8425 & 0.8456 \\
0.85 & 0.8825 & 0.8804 & 0.8851 \\
0.9 & 0.9228 & 0.9184 & 0.9241 \\
0.95 & 0.9619 & 0.9572 & 0.9623 \\
\end{array}$$
Our observations are the following:
\begin{itemize}
\item For $x_0\leq 0.72$, we are not able to get numerically a better  domain (with 16 Fourier coefficients) than the stadium. After $0.72$ we are always able to get
a domain which is better than the stadium, but also better than the ellipse.
\item When $x_0$ approaches $1$, the domain we get numerically gives a value of $y$ closer and closer to the one given by the ellipse, confirming Proposition \ref{propslope}
and the fact that ellipses provide the slope of the curve $L^+$ when we approach the point $(1,1)$.
\end{itemize}
Here we plot two possible optimal domains for the values $x_0=0.8$ and $0.9$.

\begin{center}
\begin{figure}[h]
\includegraphics[scale=0.15]{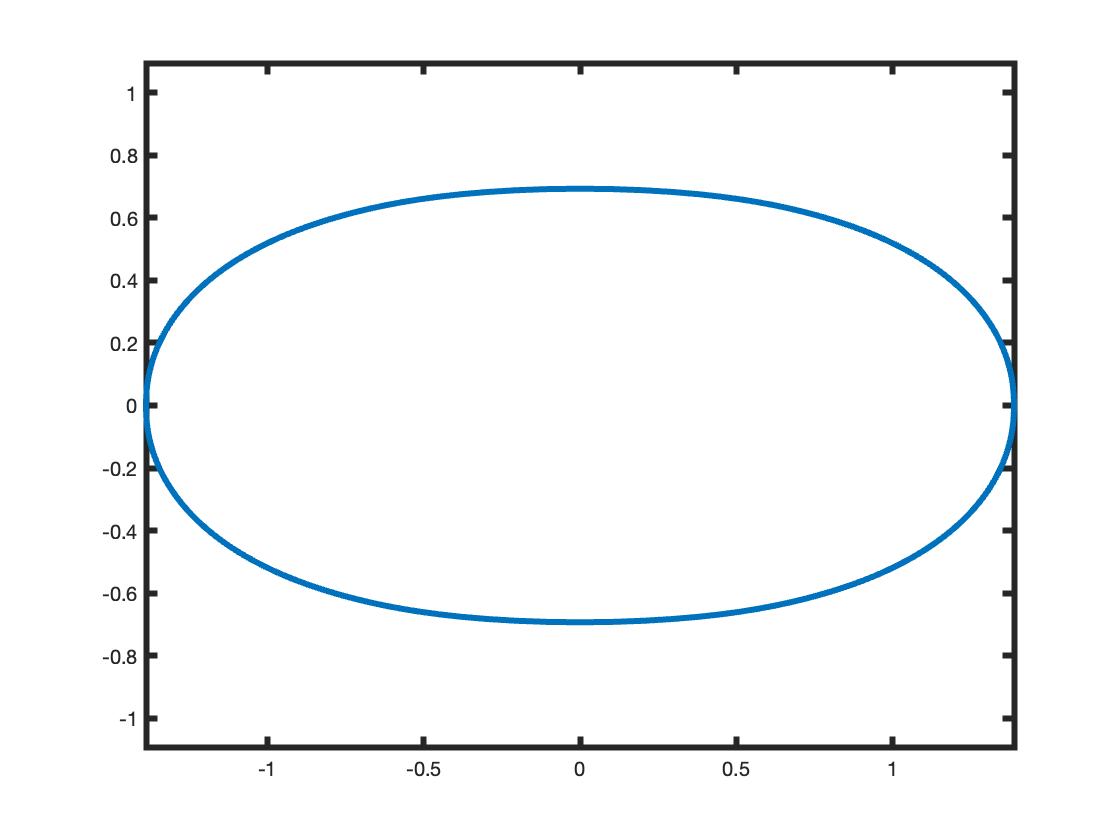}
\includegraphics[scale=0.15]{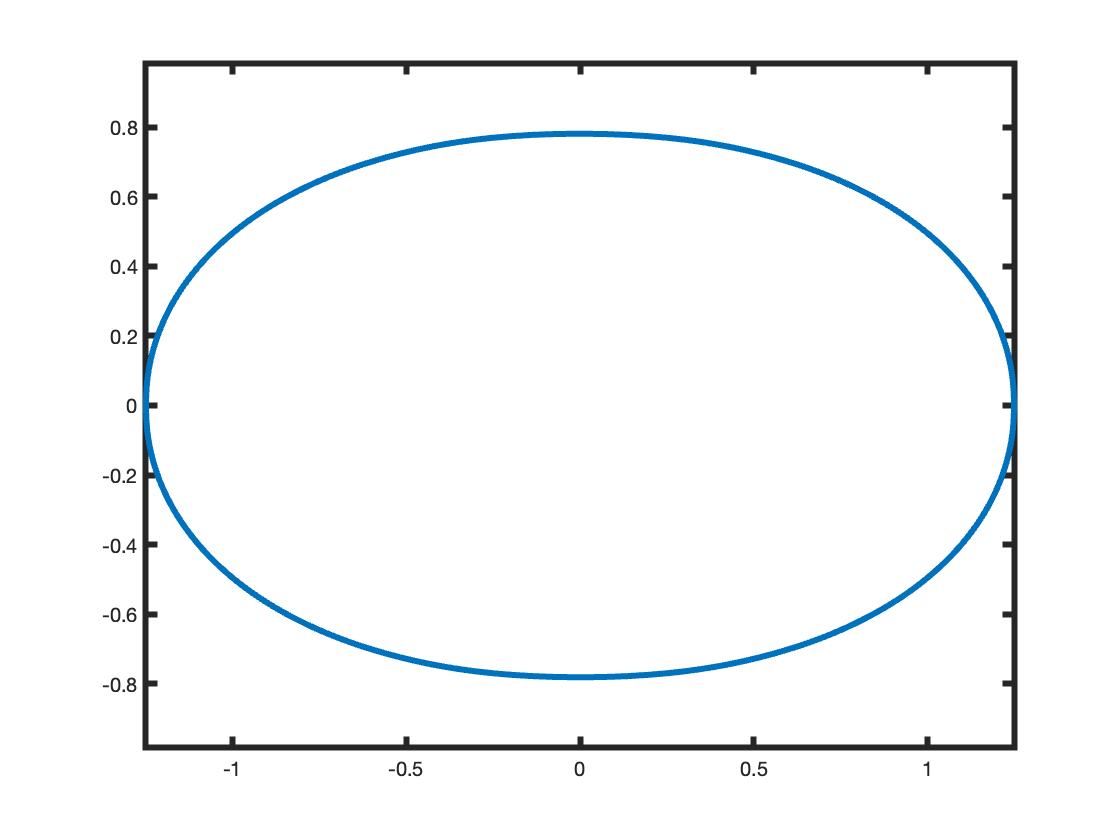}
\caption{A possible optimal domain for $x_0=0.8$ (left), for $x_0=0.9$ (right).}
\end{figure}
\end{center}
\subsection{The lower boundary $L^-$}
We already know that the optimal domains are polygons. We also know that for $x_0\leq 3/\pi\simeq 0.9549$ the optimal domain is the unique rhombus (up to scaling)
satisfying $A^2/(2\pi W)=x_0$. It remains to find the optimal (polygonal) domains for $\frac{3}{\pi} <x<1$. For that purpose, we consider $k+1$ unknown vertices
$M_i=(x_i,y_i), i=0,\ldots k$ where we assume $x_0=0$ and the last vertex is known and taken as $M_{k+1}=(1,0)$.  We have $2k+1$ unknown, the quantity under consideration are
$$P(\Omega)=4\sum_{i=0}^{k} \sqrt{(x_{i+1}-x_i)^2+(y_{i+1}-y_i)^2},\ A(\Omega)=2\left(y_k+\sum_{i=1}^k x_i(y_{i-1}-y_{i+1})\right)$$
\begin{align*}
W(\Omega)=& \frac{1}{3}\sum_{i=0}^k (x_{i+1}-x_i)(y_i^3+y_i^2y_{i+1}+y_iy_{i+1}^2+y_{i+1}^3) 
\\ & - \frac13 \sum_{i=0}^k (x_i^2+x_ix_{i+1}+x_{i+1}^2)(x_iy_{i+1}-x_{i+1}y_i).
\end{align*}
In the paper \cite{BBO}, the optimal domains looks like some "regular" octagon, therefore we have compared in the array below:
\begin{itemize}
\item this family or regular octagons (whose vertices in the first quadrant are $(0,1);(x_1,x_1);(1,0)$ where $x_1$ is chosen in such a way that $A^2/(2\pi W)=x_0$
\item the best hexagon we get in the family of hexagons whose vertices in the first quadrant are $(0,H);(x_1,H);(1,0)$,
\item the best polygon we get with our numerical procedure (we still use  the routine {\it fmincon} of Matlab with a maximal value of $k$ equal to 5.
\end{itemize}
The convexity constraint is non-linear in the unknowns and  writes, for any internal vertex $1\leq i\leq k$:
$$(x_{i+1}-x_i)y_{i-1} +(x_i-x_{i-1})y_{i+1}-(x_{i+1}-x_{i-1})y_i \leq 0$$
Since here we want to maximize the perimeter, this corresponds to the lower value of $y=4\pi A/P^2$.
$$\begin{array}{c|c|c|c}
x_0 & \mbox{"regular" octagon} & \mbox{best hexagon} & \mbox{best numerical} \\\hline
0.956 & 0.78893 & 0.78892 & 0.78892 \\
0.96 & 0.8023 & 0.8021 & 0.8021 \\
0.97 & 0.8376 & 0.8353 & 0.8353 \\
0.98 & 0.8794 & 0.8682 & 0.8682 \\
0.985 & 0.8946 & 0.8843 & 0.8843 \\
0.99 & 0.9150 & 0.8998 & 0.8998 \\
0.995 & 0.9362 & none & 0.9290 \\
\end{array}$$
Comments: hexagons exist up to $x_0=9\sqrt{3}/(5\pi)\simeq 0.9924$ that is the value for the regular hexagon.
By the way, the authors do not know whether the maximization of $A^2/W$ among polygons with a given number of sides is always given by the regular polygon.
When hexagons exist, i.e. when $x_0<0.9924$, the best hexagons seem to be the optimal sets and, in particular are better than the regular octagons.
After $0.9924$ and before $12(\sqrt{2}+1)/(\pi(5+3\sqrt{2}))\simeq 0.9977$ that is the value of $x$ for the regular octagon, see the beginning of the proof of Proposition \ref{propslope},
we get optimal domains that are non regular octagons. Here we plot two possible optimal domains for the values $x_0=0.985$ and $0.995$.

\begin{center}
\begin{figure}[h]
\includegraphics[height=4cm]{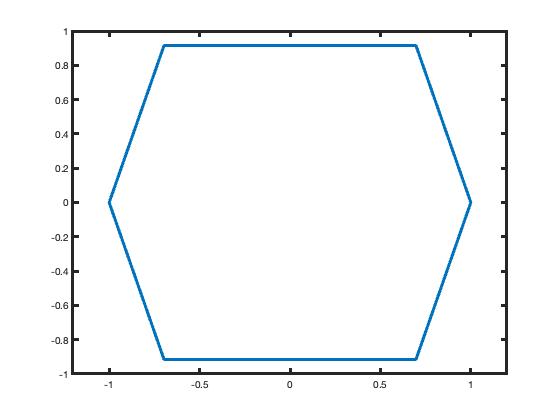}
\includegraphics[height=4cm]{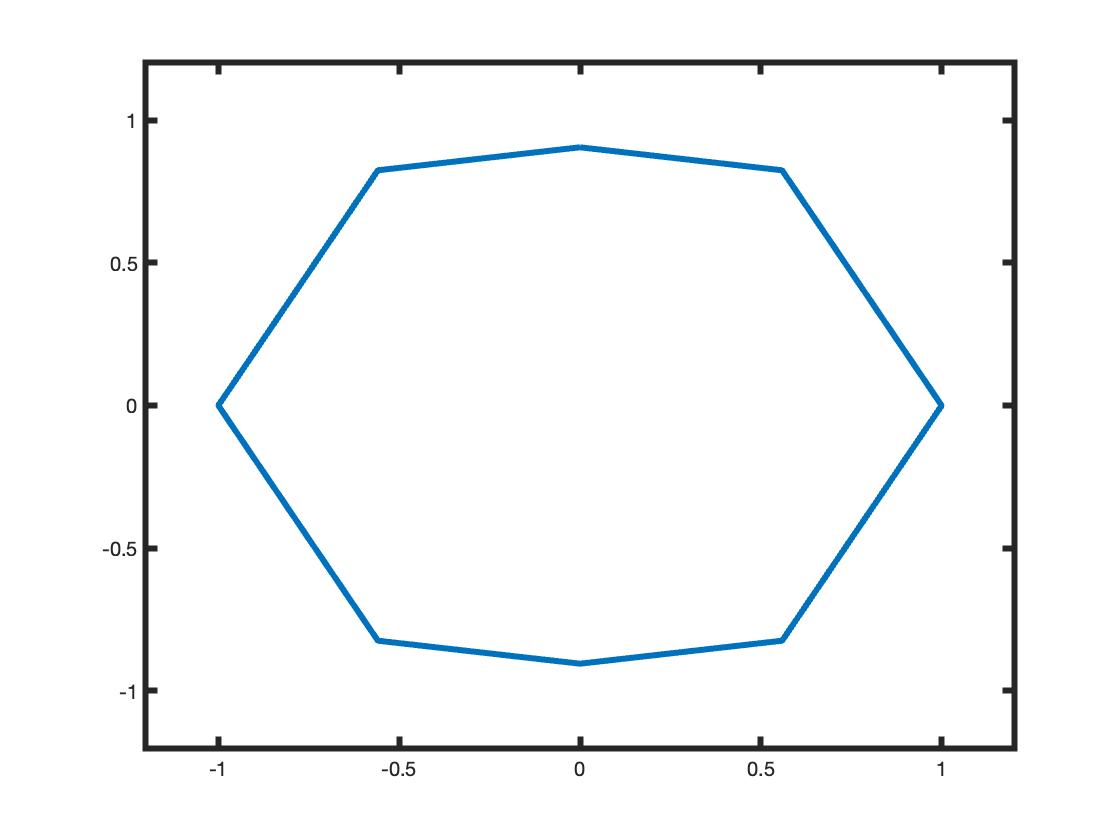}
\caption{A possible optimal domain for $x_0=0.985$ (left), for $x_0=0.995$ (right).}
\end{figure}
\end{center}
\subsection{The Blaschke-Santal\'o diagram}
We plot now the diagram, taking into account all the previous information.
\begin{center}
\begin{figure}[h]
\includegraphics[scale=0.25]{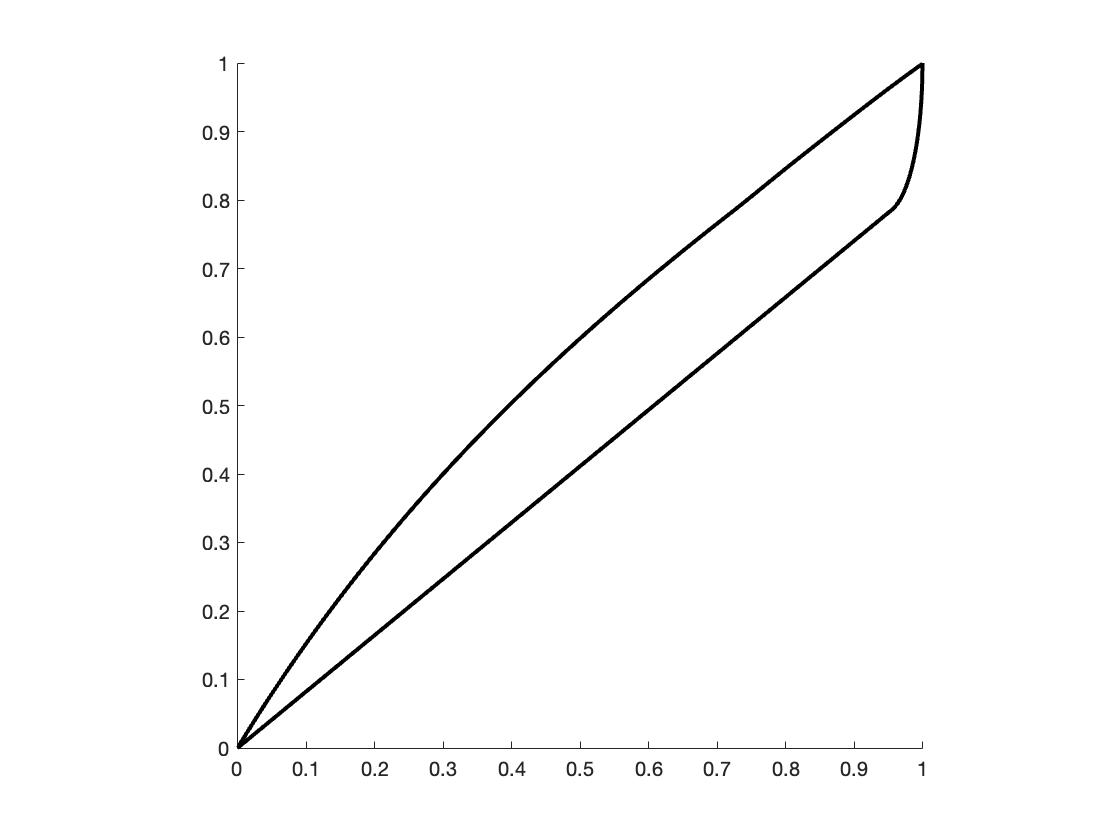}
\caption{The Blaschke-Santal\'o diagram $(A,P,W)$.}
\end{figure}
\end{center}

\end{document}